\newtheorem{theorem}{Theorem}
\newtheorem{axiom}[theorem]{Axiom}
\newtheorem{conjecture}[theorem]{Conjecture}
\newtheorem{corollary}[theorem]{Corollary}
\newtheorem{definition}{Definition}
\newtheorem{example}[theorem]{Example}
\newtheorem{exercise}[theorem]{Exercise}
\newtheorem{lemma}{Lemma}
\newtheorem{proposition}{Proposition}
\newtheorem{remark}[theorem]{Remark}
\newenvironment{proof}[1][Proof]{\noindent\textbf{#1.} }{\ \rule{0.5em}{0.5em}}
\let\pdfoutput=\undefined\fi
\chardef\@x10\chardef\@xv60
\def\tcitime{
\def\@time{%
  \@minute\time\@hour\@minute\divide\@hour\@xv
  \ifnum\@hour<\@x 0\fi\the\@hour:%
  \multiply\@hour\@xv\advance\@minute-\@hour
  \ifnum\@minute<\@x 0\fi\the\@minute
  }}%
\def\x@hyperref#1#2#3{%
   \catcode`\~ = 12
   \catcode`\$ = 12
   \catcode`\_ = 12
   \catcode`\# = 12
   \catcode`\& = 12
   \catcode`\% = 12
   \y@hyperref{#1}{#2}{#3}%
}
\def\y@hyperref#1#2#3#4{%
   #2\ref{#4}#3
   \catcode`\~ = 13
   \catcode`\$ = 3
   \catcode`\_ = 8
   \catcode`\# = 6
   \catcode`\& = 4
   \catcode`\% = 14
}
\def\QCTOpt[#1]#2{%
  \def\QCTOptB{#1}
  \def\QCTOptA{#2}
}
\def\QCTNOpt#1{%
  \def\QCTOptA{#1}
  \let\QCTOptB\empty
}
\def\Qct{%
  \@ifnextchar[{%
    \QCTOpt}{\QCTNOpt}
}
\def\QCBOpt[#1]#2{%
  \def\QCBOptB{#1}%
  \def\QCBOptA{#2}%
}
\def\QCBNOpt#1{%
  \def\QCBOptA{#1}%
  \let\QCBOptB\empty
}
\def\Qcb{%
  \@ifnextchar[{%
    \QCBOpt}{\QCBNOpt}%
}
\def\PrepCapArgs{%
  \ifx\QCBOptA\empty
    \ifx\QCTOptA\empty
      {}%
    \else
      \ifx\QCTOptB\empty
        {\QCTOptA}%
      \else
        [\QCTOptB]{\QCTOptA}%
      \fi
    \fi
  \else
    \ifx\QCBOptA\empty
      {}%
    \else
      \ifx\QCBOptB\empty
        {\QCBOptA}%
      \else
        [\QCBOptB]{\QCBOptA}%
      \fi
    \fi
  \fi
}
\def\GRAPHICSPS#1{%
 \ifcase\GRAPHICSTYPE
   \special{ps: #1}%
 \or
   \special{language "PS", include "#1"}%
 \fi
}%
\def\graffile#1#2#3#4{%
    \bgroup
	   \@inlabelfalse
       \leavevmode
       \@ifundefined{bbl@deactivate}{\def~{\string~}}{\activesoff}%
        \raise -#4 \BOXTHEFRAME{%
           \hbox to #2{\raise #3\hbox to #2{\null #1\hfil}}}%
    \egroup
}%
\def\draftbox#1#2#3#4{%
 \leavevmode\raise -#4 \hbox{%
  \frame{\rlap{\protect\tiny #1}\hbox to #2%
   {\vrule height#3 width\z@ depth\z@\hfil}%
  }%
 }%
}%
\let\nographics=\@msidraft
\newif\ifwasdraft
\def\GRAPHIC#1#2#3#4#5{%
   \ifnum\@msidraft=\@ne\draftbox{#2}{#3}{#4}{#5}%
   \else\graffile{#1}{#3}{#4}{#5}%
   \fi
}
\def\addtoLaTeXparams#1{%
    \edef\LaTeXparams{\LaTeXparams #1}}%
\newif\ifBoxFrame \BoxFramefalse
\newif\ifOverFrame \OverFramefalse
\newif\ifUnderFrame \UnderFramefalse
\def\BOXTHEFRAME#1{%
   \hbox{%
      \ifBoxFrame
         \frame{#1}%
      \else
         {#1}%
      \fi
   }%
}
\def\doFRAMEparams#1{\BoxFramefalse\OverFramefalse\UnderFramefalse\readFRAMEparams#1\end}%
\def\readFRAMEparams#1{%
 \ifx#1\end%
  \let\next=\relax
  \else
  \ifx#1i\dispkind=\z@\fi
  \ifx#1d\dispkind=\@ne\fi
  \ifx#1f\dispkind=\tw@\fi
  \ifx#1t\addtoLaTeXparams{t}\fi
  \ifx#1b\addtoLaTeXparams{b}\fi
  \ifx#1p\addtoLaTeXparams{p}\fi
  \ifx#1h\addtoLaTeXparams{h}\fi
  \ifx#1X\BoxFrametrue\fi
  \ifx#1O\OverFrametrue\fi
  \ifx#1U\UnderFrametrue\fi
  \ifx#1w
    \ifnum\@msidraft=1\wasdrafttrue\else\wasdraftfalse\fi
    \@msidraft=\@ne
  \fi
  \let\next=\readFRAMEparams
  \fi
 \next
 }%
\def\IFRAME#1#2#3#4#5#6{%
      \bgroup
      \let\QCTOptA\empty
      \let\QCTOptB\empty
      \let\QCBOptA\empty
      \let\QCBOptB\empty
      #6%
      \parindent=0pt
      \leftskip=0pt
      \rightskip=0pt
      \setbox0=\hbox{\QCBOptA}%
      \@tempdima=#1\relax
      \ifOverFrame
          \typeout{This is not implemented yet}%
          \show\HELP
      \else
         \ifdim\wd0>\@tempdima
            \advance\@tempdima by \@tempdima
            \ifdim\wd0 >\@tempdima
               \setbox1 =\vbox{%
                  \unskip\hbox to \@tempdima{\hfill\GRAPHIC{#5}{#4}{#1}{#2}{#3}\hfill}%
                  \unskip\hbox to \@tempdima{\parbox[b]{\@tempdima}{\QCBOptA}}%
               }%
               \wd1=\@tempdima
            \else
               \textwidth=\wd0
               \setbox1 =\vbox{%
                 \noindent\hbox to \wd0{\hfill\GRAPHIC{#5}{#4}{#1}{#2}{#3}\hfill}\\%
                 \noindent\hbox{\QCBOptA}%
               }%
               \wd1=\wd0
            \fi
         \else
            \ifdim\wd0>0pt
              \hsize=\@tempdima
              \setbox1=\vbox{%
                \unskip\GRAPHIC{#5}{#4}{#1}{#2}{0pt}%
                \break
                \unskip\hbox to \@tempdima{\hfill \QCBOptA\hfill}%
              }%
              \wd1=\@tempdima
           \else
              \hsize=\@tempdima
              \setbox1=\vbox{%
                \unskip\GRAPHIC{#5}{#4}{#1}{#2}{0pt}%
              }%
              \wd1=\@tempdima
           \fi
         \fi
         \@tempdimb=\ht1
         \advance\@tempdimb by -#2
         \advance\@tempdimb by #3
         \leavevmode
         \raise -\@tempdimb \hbox{\box1}%
      \fi
      \egroup%
}%
\def\DFRAME#1#2#3#4#5{%
  \vspace\topsep
  \hfil\break
  \bgroup
     \leftskip\@flushglue
	 \rightskip\@flushglue
	 \parindent\z@
	 \parfillskip\z@skip
     \let\QCTOptA\empty
     \let\QCTOptB\empty
     \let\QCBOptA\empty
     \let\QCBOptB\empty
	 \vbox\bgroup
        \ifOverFrame 
           #5\QCTOptA\par
        \fi
        \GRAPHIC{#4}{#3}{#1}{#2}{\z@}%
        \ifUnderFrame 
           \break#5\QCBOptA
        \fi
	 \egroup
  \egroup
  \vspace\topsep
  \break
}%
\def\FFRAME#1#2#3#4#5#6#7{%
  \@ifundefined{floatstyle}
    {
     \begin{figure}[#1]%
    }
    {
	 \ifx#1h
      \begin{figure}[H]%
	 \else
      \begin{figure}[#1]%
	 \fi
	}
  \let\QCTOptA\empty
  \let\QCTOptB\empty
  \let\QCBOptA\empty
  \let\QCBOptB\empty
  \ifOverFrame
    #4
    \ifx\QCTOptA\empty
    \else
      \ifx\QCTOptB\empty
        \caption{\QCTOptA}%
      \else
        \caption[\QCTOptB]{\QCTOptA}%
      \fi
    \fi
    \ifUnderFrame\else
      \label{#5}%
    \fi
  \else
    \UnderFrametrue%
  \fi
  \begin{center}\GRAPHIC{#7}{#6}{#2}{#3}{\z@}\end{center}%
  \ifUnderFrame
    #4
    \ifx\QCBOptA\empty
      \caption{}%
    \else
      \ifx\QCBOptB\empty
        \caption{\QCBOptA}%
      \else
        \caption[\QCBOptB]{\QCBOptA}%
      \fi
    \fi
    \label{#5}%
  \fi
  \end{figure}%
 }%
\def\makeactives{
  \catcode`\"=\active
  \catcode`\;=\active
  \catcode`\:=\active
  \catcode`\'=\active
  \catcode`\~=\active
}
   \gdef\activesoff{%
      \def"{\string"}%
      \def;{\string;}%
      \def:{\string:}%
      \def'{\string'}%
      \def~{\string~}%
    }
\def\FRAME#1#2#3#4#5#6#7#8{%
 \bgroup
 \ifnum\@msidraft=\@ne
   \wasdrafttrue
 \else
   \wasdraftfalse%
 \fi
 \def\LaTeXparams{}%
 \dispkind=\z@
 \def\LaTeXparams{}%
 \doFRAMEparams{#1}%
 \ifnum\dispkind=\z@\IFRAME{#2}{#3}{#4}{#7}{#8}{#5}\else
  \ifnum\dispkind=\@ne\DFRAME{#2}{#3}{#7}{#8}{#5}\else
   \ifnum\dispkind=\tw@
    \edef\@tempa{\noexpand\FFRAME{\LaTeXparams}}%
    \@tempa{#2}{#3}{#5}{#6}{#7}{#8}%
    \fi
   \fi
  \fi
  \ifwasdraft\@msidraft=1\else\@msidraft=0\fi{}%
  \egroup
 }%
\def\TEXUX#1{"texux"}
\def\func#1{\mathop{\rm #1}\nolimits}%
\long\def\QQQ#1#2{%
     \long\expandafter\def\csname#1\endcsname{#2}}%
\long\def\QQA#1#2{}%
\def\QTR#1#2{{\csname#1\endcsname {#2}}}%
\def\EXPAND#1[#2]#3{}%
\def\NOEXPAND#1[#2]#3{}%
\def\LaTeXparent#1{}%
\def\ChildStyles#1{}%
\def\ChildDefaults#1{}%
\def\QTagDef#1#2#3{}%
  \providecommand{\UNICODE}[2][]{\protect\rule{.1in}{.1in}}
  \providecommand{\U}[1]{\protect\rule{.1in}{.1in}}
\def\QQfnmark#1{\footnotemark}
 \def\abstract{%
  \if@twocolumn
   \section*{Abstract (Not appropriate in this style!)}%
   \else \small 
   \begin{center}{\bf Abstract\vspace{-.5em}\vspace{\z@}}\end{center}%
   \quotation 
   \fi
  }%
   \def\registered{\relax\ifmmode{}\r@gistered
                    \else$\m@th\r@gistered$\fi}%
 \def\r@gistered{^{\ooalign
  {\hfil\raise.07ex\hbox{$\scriptstyle\rm\text{R}$}\hfil\crcr
  \mathhexbox20D}}}}{}%
\def\TEXTsymbol#1{\mbox{$#1$}}%
\newdimen\theight
\def\newfmtname{LaTeX2e}
  \DeclareOldFontCommand{\rm}{\normalfont\rmfamily}{\mathrm}
  \DeclareOldFontCommand{\sf}{\normalfont\sffamily}{\mathsf}
  \DeclareOldFontCommand{\tt}{\normalfont\ttfamily}{\mathtt}
  \DeclareOldFontCommand{\bf}{\normalfont\bfseries}{\mathbf}
  \DeclareOldFontCommand{\it}{\normalfont\itshape}{\mathit}
  \DeclareOldFontCommand{\sl}{\normalfont\slshape}{\@nomath\sl}
  \DeclareOldFontCommand{\sc}{\normalfont\scshape}{\@nomath\sc}
\def\alpha{{\Greekmath 010B}}%
\def\beta{{\Greekmath 010C}}%
\def\gamma{{\Greekmath 010D}}%
\def\delta{{\Greekmath 010E}}%
\def\epsilon{{\Greekmath 010F}}%
\def\zeta{{\Greekmath 0110}}%
\def\eta{{\Greekmath 0111}}%
\def\theta{{\Greekmath 0112}}%
\def\iota{{\Greekmath 0113}}%
\def\kappa{{\Greekmath 0114}}%
\def\lambda{{\Greekmath 0115}}%
\def\mu{{\Greekmath 0116}}%
\def\nu{{\Greekmath 0117}}%
\def\xi{{\Greekmath 0118}}%
\def\pi{{\Greekmath 0119}}%
\def\rho{{\Greekmath 011A}}%
\def\sigma{{\Greekmath 011B}}%
\def\tau{{\Greekmath 011C}}%
\def\upsilon{{\Greekmath 011D}}%
\def\phi{{\Greekmath 011E}}%
\def\chi{{\Greekmath 011F}}%
\def\psi{{\Greekmath 0120}}%
\def\omega{{\Greekmath 0121}}%
\def\varepsilon{{\Greekmath 0122}}%
\def\vartheta{{\Greekmath 0123}}%
\def\varpi{{\Greekmath 0124}}%
\def\varrho{{\Greekmath 0125}}%
\def\varsigma{{\Greekmath 0126}}%
\def\varphi{{\Greekmath 0127}}%
\def\nabla{{\Greekmath 0272}}
\def\FindBoldGroup{%
   {\setbox0=\hbox{$\mathbf{x\global\edef\theboldgroup{\the\mathgroup}}$}}%
}
\def\Greekmath#1#2#3#4{%
    \if@compatibility
        \ifnum\mathgroup=\symbold
           \mathchoice{\mbox{\boldmath$\displaystyle\mathchar"#1#2#3#4$}}%
                      {\mbox{\boldmath$\textstyle\mathchar"#1#2#3#4$}}%
                      {\mbox{\boldmath$\scriptstyle\mathchar"#1#2#3#4$}}%
                      {\mbox{\boldmath$\scriptscriptstyle\mathchar"#1#2#3#4$}}%
        \else
           \mathchar"#1#2#3#4%
        \fi 
    \else 
        \FindBoldGroup
        \ifnum\mathgroup=\theboldgroup 
           \mathchoice{\mbox{\boldmath$\displaystyle\mathchar"#1#2#3#4$}}%
                      {\mbox{\boldmath$\textstyle\mathchar"#1#2#3#4$}}%
                      {\mbox{\boldmath$\scriptstyle\mathchar"#1#2#3#4$}}%
                      {\mbox{\boldmath$\scriptscriptstyle\mathchar"#1#2#3#4$}}%
        \else
           \mathchar"#1#2#3#4%
        \fi     	    
	  \fi}
\newif\ifGreekBold  \GreekBoldfalse
\let\SAVEPBF=\pbf
\def\pbf{\GreekBoldtrue\SAVEPBF}%
  \newcounter{equationnumber}  
  \def\mathletters{%
     \addtocounter{equation}{1}
     \edef\@currentlabel{\theequation}%
     \setcounter{equationnumber}{\c@equation}
     \setcounter{equation}{0}%
     \edef\theequation{\@currentlabel\noexpand\alph{equation}}%
  }
    \def\BibTeX{{\rm B\kern-.05em{\sc i\kern-.025em b}\kern-.08em
                 T\kern-.1667em\lower.7ex\hbox{E}\kern-.125emX}}}{}%
\def\AmS{{\protect\usefont{OMS}{cmsy}{m}{n}%
                A\kern-.1667em\lower.5ex\hbox{M}\kern-.125emS}}}{}%
\def\@@eqncr{\let\@tempa\relax
    \ifcase\@eqcnt \def\@tempa{& & &}\or \def\@tempa{& &}%
      \else \def\@tempa{&}\fi
     \@tempa
     \if@eqnsw
        \iftag@
           \@taggnum
        \else
           \@eqnnum\stepcounter{equation}%
        \fi
     \fi
     \global\tag@false
     \global\@eqnswtrue
     \global\@eqcnt\z@\cr}
\def\TCItag{\@ifnextchar*{\@TCItagstar}{\@TCItag}}
\def\@TCItag#1{%
    \global\tag@true
    \global\def\@taggnum{(#1)}%
    \global\def\@currentlabel{#1}}
\def\@TCItagstar*#1{%
    \global\tag@true
    \global\def\@taggnum{#1}%
    \global\def\@currentlabel{#1}}
\def\tint{\msi@int\textstyle\int}%
\def\tiint{\msi@int\textstyle\iint}%
\def\tiiint{\msi@int\textstyle\iiint}%
\def\tiiiint{\msi@int\textstyle\iiiint}%
\def\tidotsint{\msi@int\textstyle\idotsint}%
\def\toint{\msi@int\textstyle\oint}%
\def\tsum{\mathop{\textstyle \sum }}%
\newtoks\temptoksa
\newtoks\temptoksb
\newtoks\temptoksc
\def\msi@int#1#2{%
 \def\@temp{{#1#2\the\temptoksc_{\the\temptoksa}^{\the\temptoksb}}}%
 \futurelet\@nextcs
 \@int
}
\def\@int{%
   \ifx\@nextcs\limits
      \typeout{Found limits}%
      \temptoksc={\limits}%
	  \let\@next\@intgobble%
   \else\ifx\@nextcs\nolimits
      \typeout{Found nolimits}%
      \temptoksc={\nolimits}%
	  \let\@next\@intgobble%
   \else
      \typeout{Did not find limits or no limits}%
      \temptoksc={}%
      \let\@next\msi@limits%
   \fi\fi
   \@next   
}%
\def\@intgobble#1{%
   \typeout{arg is #1}%
   \msi@limits
}
\def\msi@limits{%
   \temptoksa={}%
   \temptoksb={}%
   \@ifnextchar_{\@limitsa}{\@limitsb}%
}
\def\@limitsa_#1{%
   \temptoksa={#1}%
   \@ifnextchar^{\@limitsc}{\@temp}%
}
\def\@limitsb{%
   \@ifnextchar^{\@limitsc}{\@temp}%
}
\def\@limitsc^#1{%
   \temptoksb={#1}%
   \@ifnextchar_{\@limitsd}{\@temp}%
}
\def\@limitsd_#1{%
   \temptoksa={#1}%
   \@temp
}
\def\dint{\msi@int\displaystyle\int}%
\def\diint{\msi@int\displaystyle\iint}%
\def\diiint{\msi@int\displaystyle\iiint}%
\def\diiiint{\msi@int\displaystyle\iiiint}%
\def\didotsint{\msi@int\displaystyle\idotsint}%
\def\doint{\msi@int\displaystyle\oint}%
\def\ExitTCILatex{\makeatother }
\if@compatibility\message{amsmath already loaded}\fi\aftergroup\ExitTCILatex}
\if@compatibility\message{amstex already loaded}\fi\aftergroup\ExitTCILatex}
\if@compatibility\message{amsgen already loaded}\fi\aftergroup\ExitTCILatex}
\let\DOTSI\relax
\def\RIfM@{\relax\ifmmode}%
\def\FN@{\futurelet\next}%
\def\iint{\DOTSI\intno@\tw@\FN@\ints@}%
\def\iiint{\DOTSI\intno@\thr@@\FN@\ints@}%
\def\iiiint{\DOTSI\intno@4 \FN@\ints@}%
\def\idotsint{\DOTSI\intno@\z@\FN@\ints@}%
\def\ints@{\findlimits@\ints@@}%
\newif\iflimtoken@
\newif\iflimits@
\def\findlimits@{\limtoken@true\ifx\next\limits\limits@true
 \else\ifx\next\nolimits\limits@false\else
 \limtoken@false\ifx\ilimits@\nolimits\limits@false\else
 \ifinner\limits@false\else\limits@true\fi\fi\fi\fi}%
\def\multint@{\int\ifnum\intno@=\z@\intdots@                          
 \else\intkern@\fi                                                    
 \ifnum\intno@>\tw@\int\intkern@\fi                                   
 \ifnum\intno@>\thr@@\int\intkern@\fi                                 
 \int}
\def\multintlimits@{\intop\ifnum\intno@=\z@\intdots@\else\intkern@\fi
 \ifnum\intno@>\tw@\intop\intkern@\fi
 \ifnum\intno@>\thr@@\intop\intkern@\fi\intop}%
\def\intic@{%
    \mathchoice{\hskip.5em}{\hskip.4em}{\hskip.4em}{\hskip.4em}}%
\def\negintic@{\mathchoice
 {\hskip-.5em}{\hskip-.4em}{\hskip-.4em}{\hskip-.4em}}%
\def\ints@@{\iflimtoken@                                              
 \def\ints@@@{\iflimits@\negintic@
   \mathop{\intic@\multintlimits@}\limits                             
  \else\multint@\nolimits\fi                                          
  \eat@}
 \else                                                                
 \def\ints@@@{\iflimits@\negintic@
  \mathop{\intic@\multintlimits@}\limits\else
  \multint@\nolimits\fi}\fi\ints@@@}%
\def\intkern@{\mathchoice{\!\!\!}{\!\!}{\!\!}{\!\!}}%
\def\plaincdots@{\mathinner{\cdotp\cdotp\cdotp}}%
\def\intdots@{\mathchoice{\plaincdots@}%
 {{\cdotp}\mkern1.5mu{\cdotp}\mkern1.5mu{\cdotp}}%
 {{\cdotp}\mkern1mu{\cdotp}\mkern1mu{\cdotp}}%
 {{\cdotp}\mkern1mu{\cdotp}\mkern1mu{\cdotp}}}%
\def\RIfM@{\relax\protect\ifmmode}
\def\text{\RIfM@\expandafter\text@\else\expandafter\mbox\fi}
\let\nfss@text\text
\def\text@#1{\mathchoice
   {\textdef@\displaystyle\f@size{#1}}%
   {\textdef@\textstyle\tf@size{\firstchoice@false #1}}%
   {\textdef@\textstyle\sf@size{\firstchoice@false #1}}%
   {\textdef@\textstyle \ssf@size{\firstchoice@false #1}}%
   \glb@settings}
\def\textdef@#1#2#3{\hbox{{%
                    \everymath{#1}%
                    \let\f@size#2\selectfont
                    #3}}}
\newif\iffirstchoice@
\def\Let@{\relax\iffalse{\fi\let\\=\cr\iffalse}\fi}%
\def\vspace@{\def\vspace##1{\crcr\noalign{\vskip##1\relax}}}%
\def\multilimits@{\bgroup\vspace@\Let@
 \baselineskip\fontdimen10 \scriptfont\tw@
 \advance\baselineskip\fontdimen12 \scriptfont\tw@
 \lineskip\thr@@\fontdimen8 \scriptfont\thr@@
 \lineskiplimit\lineskip
 \vbox\bgroup\ialign\bgroup\hfil$\m@th\scriptstyle{##}$\hfil\crcr}%
\def\Sb{_\multilimits@}%
\def\endSb{\crcr\egroup\egroup\egroup}%
\def\Sp{^\multilimits@}%
\newdimen\ex@
\def\rightarrowfill@#1{$#1\m@th\mathord-\mkern-6mu\cleaders
 \hbox{$#1\mkern-2mu\mathord-\mkern-2mu$}\hfill
 \mkern-6mu\mathord\rightarrow$}%
\def\leftarrowfill@#1{$#1\m@th\mathord\leftarrow\mkern-6mu\cleaders
 \hbox{$#1\mkern-2mu\mathord-\mkern-2mu$}\hfill\mkern-6mu\mathord-$}%
\def\leftrightarrowfill@#1{$#1\m@th\mathord\leftarrow
\mkern-6mu\cleaders
 \hbox{$#1\mkern-2mu\mathord-\mkern-2mu$}\hfill
 \mkern-6mu\mathord\rightarrow$}%
\def\overrightarrow{\mathpalette\overrightarrow@}%
\def\overrightarrow@#1#2{\vbox{\ialign{##\crcr\rightarrowfill@#1\crcr
 \noalign{\kern-\ex@\nointerlineskip}$\m@th\hfil#1#2\hfil$\crcr}}}%
\def\overleftarrow{\mathpalette\overleftarrow@}%
\def\overleftarrow@#1#2{\vbox{\ialign{##\crcr\leftarrowfill@#1\crcr
 \noalign{\kern-\ex@\nointerlineskip}$\m@th\hfil#1#2\hfil$\crcr}}}%
\def\overleftrightarrow{\mathpalette\overleftrightarrow@}%
\def\overleftrightarrow@#1#2{\vbox{\ialign{##\crcr
   \leftrightarrowfill@#1\crcr
 \noalign{\kern-\ex@\nointerlineskip}$\m@th\hfil#1#2\hfil$\crcr}}}%
\def\underrightarrow{\mathpalette\underrightarrow@}%
\def\underrightarrow@#1#2{\vtop{\ialign{##\crcr$\m@th\hfil#1#2\hfil
  $\crcr\noalign{\nointerlineskip}\rightarrowfill@#1\crcr}}}%
\def\underleftarrow{\mathpalette\underleftarrow@}%
\def\underleftarrow@#1#2{\vtop{\ialign{##\crcr$\m@th\hfil#1#2\hfil
  $\crcr\noalign{\nointerlineskip}\leftarrowfill@#1\crcr}}}%
\def\underleftrightarrow{\mathpalette\underleftrightarrow@}%
\def\underleftrightarrow@#1#2{\vtop{\ialign{##\crcr$\m@th
  \hfil#1#2\hfil$\crcr
 \noalign{\nointerlineskip}\leftrightarrowfill@#1\crcr}}}%
\def\qopnamewl@#1{\mathop{\operator@font#1}\nlimits@}
\let\nlimits@\displaylimits
\def\setboxz@h{\setbox\z@\hbox}
\def\varlim@#1#2{\mathop{\vtop{\ialign{##\crcr
 \hfil$#1\m@th\operator@font lim$\hfil\crcr
 \noalign{\nointerlineskip}#2#1\crcr
 \noalign{\nointerlineskip\kern-\ex@}\crcr}}}}
 \def\rightarrowfill@#1{\m@th\setboxz@h{$#1-$}\ht\z@\z@
  $#1\copy\z@\mkern-6mu\cleaders
  \hbox{$#1\mkern-2mu\box\z@\mkern-2mu$}\hfill
  \mkern-6mu\mathord\rightarrow$}
\def\leftarrowfill@#1{\m@th\setboxz@h{$#1-$}\ht\z@\z@
  $#1\mathord\leftarrow\mkern-6mu\cleaders
  \hbox{$#1\mkern-2mu\copy\z@\mkern-2mu$}\hfill
  \mkern-6mu\box\z@$}
\def\projlim{\qopnamewl@{proj\,lim}}
\def\injlim{\qopnamewl@{inj\,lim}}
\def\varinjlim{\mathpalette\varlim@\rightarrowfill@}
\def\varprojlim{\mathpalette\varlim@\leftarrowfill@}
\def\varliminf{\mathpalette\varliminf@{}}
\def\varliminf@#1{\mathop{\underline{\vrule\@depth.2\ex@\@width\z@
   \hbox{$#1\m@th\operator@font lim$}}}}
\def\varlimsup{\mathpalette\varlimsup@{}}
\def\varlimsup@#1{\mathop{\overline
  {\hbox{$#1\m@th\operator@font lim$}}}}
\def\align{\@verbatim \frenchspacing\@vobeyspaces \@alignverbatim
You are using the "align" environment in a style in which it is not defined.}
\let\csname endalign*\endcsname =\endtrivlist
\def\alignat{\@verbatim \frenchspacing\@vobeyspaces \@alignatverbatim
You are using the "alignat" environment in a style in which it is not defined.}
\let\csname endalignat*\endcsname =\endtrivlist
\def\xalignat{\@verbatim \frenchspacing\@vobeyspaces \@xalignatverbatim
You are using the "xalignat" environment in a style in which it is not defined.}
\let\csname endxalignat*\endcsname =\endtrivlist
\def\gather{\@verbatim \frenchspacing\@vobeyspaces \@gatherverbatim
You are using the "gather" environment in a style in which it is not defined.}
\let\csname endgather*\endcsname =\endtrivlist
\def\multiline{\@verbatim \frenchspacing\@vobeyspaces \@multilineverbatim
You are using the "multiline" environment in a style in which it is not defined.}
\let\csname endmultiline*\endcsname =\endtrivlist
\def\arrax{\@verbatim \frenchspacing\@vobeyspaces \@arraxverbatim
You are using a type of "array" construct that is only allowed in AmS-LaTeX.}
\def\tabulax{\@verbatim \frenchspacing\@vobeyspaces \@tabulaxverbatim
You are using a type of "tabular" construct that is only allowed in AmS-LaTeX.}
\let\csname endarrax*\endcsname =\endtrivlist
\let\csname endtabulax*\endcsname =\endtrivlist
 \def\endequation{%
     \ifmmode\ifinner 
      \iftag@
        \addtocounter{equation}{-1} 
        $\hfil
           \displaywidth\linewidth\@taggnum\egroup \endtrivlist
        \global\tag@false
        \global\@ignoretrue   
      \else
        $\hfil
           \displaywidth\linewidth\@eqnnum\egroup \endtrivlist
        \global\tag@false
        \global\@ignoretrue 
      \fi
     \else   
      \iftag@
        \addtocounter{equation}{-1} 
        \eqno \hbox{\@taggnum}
        \global\tag@false%
        $$\global\@ignoretrue
      \else
        \eqno \hbox{\@eqnnum}
        $$\global\@ignoretrue
      \fi
     \fi\fi
 } 
 \newif\iftag@ \tag@false
 \def\TCItag{\@ifnextchar*{\@TCItagstar}{\@TCItag}}
 \def\@TCItag#1{%
     \global\tag@true
     \global\def\@taggnum{(#1)}%
     \global\def\@currentlabel{#1}}
 \def\@TCItagstar*#1{%
     \global\tag@true
     \global\def\@taggnum{#1}%
     \global\def\@currentlabel{#1}}
     \def\tag{\@ifnextchar*{\@tagstar}{\@tag}}
     \def\@tag#1{%
         \global\tag@true
         \global\def\@taggnum{(#1)}}
     \def\@tagstar*#1{%
         \global\tag@true
         \global\def\@taggnum{#1}}
\begin{document}

\begin{center}
\textbf{{\Large Deterministic Equations for \\[0pt]
\textbf{Stochastic }Spatial Evolutionary Games}}\footnote{%
We thank William Sandholm for his useful suggestions and comments. The
research of the S.-H. H. was supported under grants NSF-DMS-0715125. The
research of M.A.K. was supported by the National Science Foundation through
the grant NSF-DMS-0715125, and the European Commission Marie-Curie grant
FP6-517911. The research of L. R.-B. was partially supported by the NSF
under grants NSF-DMS-06058.}

{\large \vspace{2mm} } {\large \vspace{2mm} }

{\large \textbf{Sung-Ha Hwang\footnote{%
corresponding author. email: hwang@math.umass.edu}} }

{\large \vspace{2mm} }

{\large \ {\small \textit{Department of Mathematics and Statistics,
University of Massachusetts, U.S.A. \\[0pt]
}} }

{\large \vspace{5mm} }

{\large \textbf{Markos Katsoulakis\footnote{%
email: markos@math.umass.edu}} }

{\large \vspace{2mm} }

{\large {\small \textit{Department of Mathematics and Statistics, University
of Massachusetts, U.S.A. \\[0pt]
Department of Applied Mathematics, University of Crete and Foundation of
Research and Technology-Hellas, Greece }} }

{\large \vspace{5mm} }

{\large \textbf{Luc Rey-Bellet\footnote{%
email: luc@math.umass.edu}} }

{\large \vspace{2mm} }

{\large {\small \textit{Department of Mathematics and Statistics, University
of Massachusetts, U.S.A. \\[0pt]
}} }
\end{center}

\begin{abstract}
Spatial evolutionary games model individuals who are distributed in a
spatial domain and update their strategies upon playing a normal form game
with their neighbors. We derive integro-differential equations as
deterministic approximations of the microscopic updating stochastic
processes. This generalizes the known mean-field ordinary differential
equations and provide a powerful tool to investigate the spatial effects in
populations evolution. The deterministic equations allow to identify many
interesting features of the evolution of strategy profiles in a population,
such as standing and traveling waves, and pattern formation, especially in
replicator-type evolutions.
\end{abstract}

\noindent \textbf{Keywords:} Evolutionary games, mean-field interactions,
deterministic approximation, Kac potentials, pattern formation, traveling
wave solutions.

\thispagestyle{empty} \onehalfspacing

\newpage

\section{Introduction}

Many macroeconomic phenomena occur as the aggregate results of the actions
and interactions of many and unrelated agents. Interactions among agents are
inherently local because they are separated by spatial location, language,
culture, and etc. As an example the occurrence of depression or inflation
might depend on the decentralized decisions of many agents to save or
consume based on the local economic conditions. \ Another example of a social
phenomenon where spatial considerations matter is the decision on where
to live, which frequently depends on the neighbors living there and this, in
turn, can induce the spatial patterns of segregation in residential areas %
\citep{Schelling71}. Our main goal in this paper is to develop tools to
understand the implications of spatial interactions in evolutionary games.

We consider a class of spatial stochastic processes in which agents are
located on vertices of a graph and update their strategy after observing the
strategy of a distinguished set of their neighbors. Such stochastic models
have been studied in evolutionary game theory by, among others, %
\citet{Kandori93}, \citet{Ellison93}, \citet{Blume93}, \citet{Blume95}, %
\citet{Young98}. In \emph{mean-field} models without any consideration of
geometrical proximity (e.g., see \citet{Kandori93}), every player is
considered a neighbor and is given the same weight in evaluating payoffs and
the model can be described using only the aggregate quantities such as the
proportion of players with a given strategy.

The popular use of mean-field stochastic models in evolutionary game
theory is mainly due to the simple structure of the stochastic processes.
While these models demonstrate effectively how the combination of the myopic
strategy revision and the random experiments (noise or mutation) of
individuals can lead to a selection of an equilibrium from among multiple
Nash equilibria, they assume that everyone in a population interacts \emph{%
uniformly} with the entire population of players, neglecting the local
structure of interaction. Because of this, these models fail to address the
importance of the locality of interactions in forming the globally observed
phenomenon. The range of interaction itself is a critical factor in
determining the speed of convergence to equilibrium and the time that the
society locked in a \textquotedblleft bad\textquotedblright\ state such as 
\textit{Pareto-inefficient} state. For instance, \citet{Ellison93} shows
that the speed of convergence in the system with the uniform interaction in
a large population is very slow and so the question of equilibrium selection
in the mean field models may not yield a useful insight.

By contrast, in models with local interactions players interact with a fixed
finite set of neighbors (e.g., see \citet{Blume93}, \citet{Blume95}, %
\citet[][Chapter 5]{Young98} and see \cite{Szabo07} for a comprehensive
survey of spatial evolutionary games). Some important questions about
equilibrium selection and speed of convergence to equilibrium have been
addressed for special models using tools such as stochastic stability and
Gibbs measures (\cite{Young93},\citet{Ellison93},\citet{Blume93}, \cite{Young98}, See also %
\citet{Freidlin84}). These results are however limited to either potential
games and logit dynamics or coordinations games and perturbed best response
dynamics; many other important games addressing economic problems $-~$such
as the Rock-paper-scissors games $-$ are neither potential games nor
coordination games. Also another important behavioral rule like imitative
updating, which may arise from the limited information or poor computational
ability of agents, might exhibit very different phenomena and should be
compared to the (perturbed) best response rule (See \citet{Levine07,
Bergin09} for the importance of imitative behaviors). However, these
questions using stochastic and probabilistic methods lead to very difficult
problems (e.g. see \cite{Durrett99}).

We concentrate here on an intermediate case, \emph{local mean-field models}
where a given player interacts with a substantial proportion of the
population, but where spatial variations in the strength of interactions are
nonetheless allowed. By considering the intermediate model, we are able to
rigorously derive deterministic equations which approximate the original
stochastic processes. The distinctive advantages of our approaches lie in
that under our assumptions, the questions of pattern formations,
equilibrium selection, and the speed of convergence at the level of complex
stochastic processes can be directly translated into those at the level of the
differential equations - a level which is more tractable and also incorporates 
some underlying stochastic details. An attractive
feature of the derived differential equations is that it can provide
tractable and systematic tools which can examine the relationship between the
local heterogeneity and the globally observed phenomena; for example, one
can easily study how a given set of preferences of individuals at a certain
neighborhood can lead to segregation patterns in the residential areas.

In the current literature of evolutionary game theory, (e.g. %
\citet{Hofbauer03,Weibull95, Sandholm08}), the time evolution of the
proportion of agents with strategy $i$ at time $t$, $f_{t}(i)$, is specified
by an ordinary differential equation of the type 
\begin{equation}
\frac{\partial }{\partial t}f_{t}(i)=\tsum\limits_{k\in S}\mathbf{c}%
^{M}(k,i,f_{t})f_{t}(k)-f_{t}(i)\tsum\limits_{k\in S}\mathbf{c}%
^{M}(i,k,f_{t})\,\,\,\text{ for }i\in S.  \label{intro-ODE}
\end{equation}%
Examples of such equations include the well-known replicator dynamics, the
von-Neumann Nash dynamics, and the logit dynamics. The first term in
equation (\ref{intro-ODE}) 
describes the rate at which agents switch to strategy $i$ from some strategy
other than $i$, while the second term 
describes the rate at which agents switch to some other strategy from
strategy $i$. For this reason \ equation ($\ref{intro-ODE})~$ is also called
an \emph{input-output} equation.

It is well-known \citep{Kurtz70, Benaim03, Darling08} that the solution of
equation (\ref{intro-ODE}) $f_{t}(i)$ approximates, on finite time
intervals, a suitable mean-field stochastic process, in the limit of
infinite population, and $f_{t}(i)$ is the average, over the entire spatial
domain, of the proportion of player with strategy $i$. In our local
mean-field model where spatial structures survive, we will describe instead
the state of the system by a local density function $f_{t}(u,i)$. Here $u$
belongs to the spatial domain $\mathbb{A\subset }$~$\mathbb{R}^{n}$ where
agents are continuously distributed and $f_{t}(u,i)$ represents the
proportion of agents with strategy $i$ at the spatial location $u$. Our main
result is that local mean-field stochastic processes are approximated, on
finite time intervals and in the limit of infinite population, by equations
of the type%
\begin{equation}
\frac{\partial }{\partial t}f_{t}(u,i)=\tsum\limits_{k\in S}\mathbf{c}%
(u,k,i,f_{t})f_{t}(u,k)-f_{t}(u,i)\tsum\limits_{k\in S}\mathbf{c}%
(u,i,k,f_{t})\text{ \ for }i\in S\,,  \label{intro-IDE}
\end{equation}%
which provides a natural generalization of equation (\ref{intro-ODE}). For
example, the term $\mathbf{c(}u,k,i,f)$ describes the rate at which agents
at spatial location $u$ switch from strategy $k$ to $i$. This rate depends
on the strategies of agents at other spatial locations and typically, $%
\mathbf{c}(u,k,i,f)$ will have the functional form 
\begin{equation*}
\mathbf{c}(u,k,i,f)=G(k,i,\mathcal{J}\ast f(u,i)),\text{ where }\mathcal{J}%
\ast f(u,i):=\int \mathcal{J(}u-v)f(v,i)dv\,.
\end{equation*}%
Here $\mathcal{J}\ast f$ is the convolution product of $\mathcal{J}$ with $f$
and $\mathcal{J(}u)$ is a non-negative probability kernel which describes
the interaction strength between players whose relative distance is $u$.
When $\mathcal{J}$ is constant equation (\ref{intro-IDE}) reduces to
equation (\ref{intro-ODE}). Note that the rate of increases in $f_{t}$ at $u$
depends on $f_{t}(v,i)$ for all $v$ in the spatial domain $\mathbb{A}$ and
that equation (\ref{intro-IDE}) is non-local; so, the equation is called an 
\emph{integro-differential equation} (IDE).

We will obtain this equation in a suitable \emph{spatial} scaling where
agents are continuously distributed in a spatial domain $\mathbb{A}$ and is
often called a \emph{mesoscopic scaling} in the physics literature. For this
reason equations of the form (\ref{intro-IDE}) are often called mesoscopic
equations but are sometimes also referred to as local mean-field equations.
Mesoscopic limits similar to ours have been derived in several models in
statistical mechanics by \citet{Comets87}, \citet{DeMasi94}, %
\citet{Katsoulakis05}, and others. We generalize these results to the
spatial stochastic processes arising in evolutionary game theory. Other
scaling limits, such as hydrodynamic limits, where space and time are scaled
simultaneously, giving rise to evolving fronts and interfaces, e.g. \cite%
{Katsoulakis97}, are also potentially relevant to game theory but will not
be discussed here.

\begin{figure}[tb]
\centering
\subfigure[Traveling
front]{\label{fig:traveling}\includegraphics[width=0.3\textwidth]{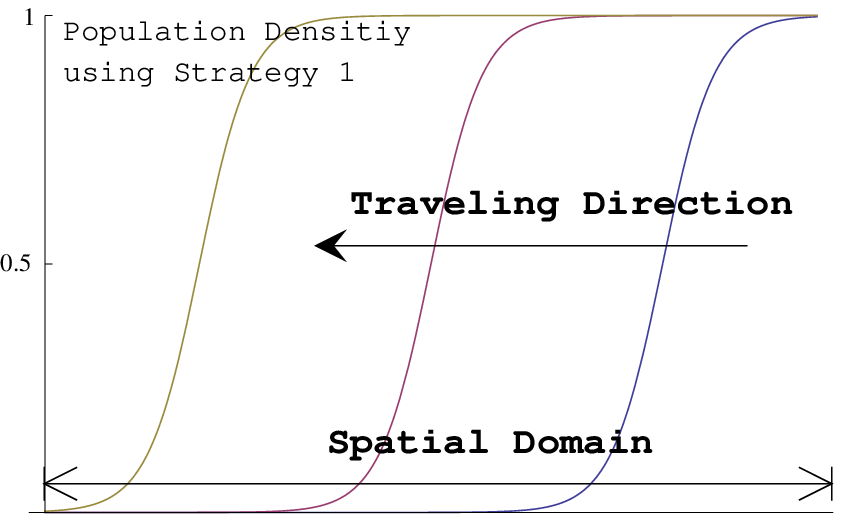}}
\qquad 
\subfigure[Imitative update vs
perturbed best
response]{\label{fig:schematic}\includegraphics[width=0.5\textwidth]{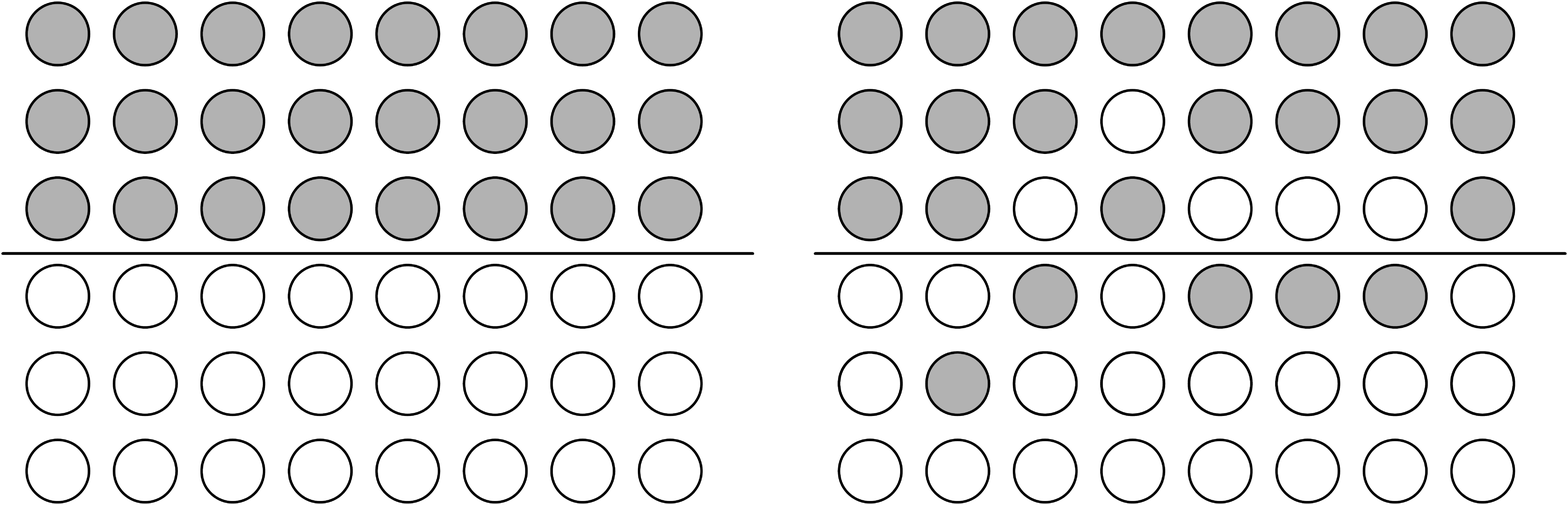}}
\caption{\textbf{Traveling front and strategy choices} 
{\protect\footnotesize {Panel (a) illustrates how a traveling front solution
can describe the propagation of a strategy to the whole spatial domain for a
two strategy game. Panel (b) shows the configurations of strategy choices of
individuals at each site (white circle: strategy 1, black circle: strategy
2) In the replicator dynamics (the left: an example of imitative updating
rules) \ the interface between the choices of strategies is sharp; in the
logit dynamics (the right: an example of perturbed best response rules) the
interface is not sharp }}}
\label{fig:travel-front}
\end{figure}

Specifically, using this approximated equation we study the effect of a
given initial condition and behavioral update rule on pattern formations and
observe that when individuals behave by imitation of their close neighbors,
the segregation of choices of strategy may develop and persist (see Figure %
\ref{fig-pattern}). But in a society where (perturbed) best response rule is
the dominating behavior, we observe that the system, instead, converges
everywhere to a \textquotedblleft rational\textquotedblright\ equilibrium
exponentially fast (see Figure \ref{fig:travel-front}(b) for comparison of
interfaces). The traveling front solutions shows a propagation of a local
strategy profile into a global domain of the space and plays an important
role in examining the selection of equilibrium from among multiple Nash
equilibria and the speed of convergence to equilibrium (see Figure \ref%
{fig:travel-front} (a), \citet{Hofbauer97}). In our models, in imitative
dynamics, we observe an extremely slow transition to the better equilibrium
in contrast to standard beliefs on equilibrium selection. 
\begin{figure}[tb]
\centering 
\includegraphics[scale=0.6]{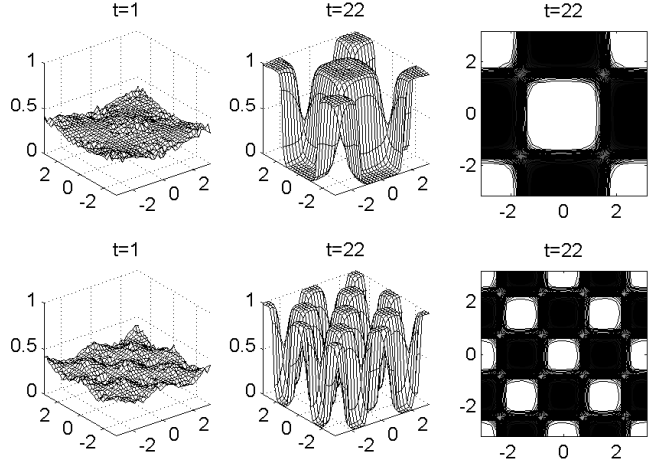}
\caption{\textbf{Pattern Formation in the replicator dynamics } 
{\protect\footnotesize {\ We use two-player coordination game with $%
a_{11}=2/3,a_{22}=1/3,$\ $a_{12}=a_{21}=0$. The left and middle panels show
the time evolutions of population densities of strategy 1 in the spatial
domain $T^{d}=[-\protect\pi ,\protect\pi ]^{2}.$\ The number of nodes is $64$%
\ and the time step is $0.0175.$ The initial conditions are $1/3+\func{rand}%
\cos (x)\cos (y)\ $(upper panel) and $1/3+\func{rand}\cos (2x)\cos (2y)\ $%
(lower panel), where rand denotes a realization of the uniform random
variable $[0,1]$\ at each node. \ For the interaction kernel, }}$\mathcal{J}$%
{\protect\footnotesize {$(r)=\exp \left( -bx^{2}\right) /\protect\int \exp
(-bx^{2})dx,b=15.$\ The right panels show the contours of the densities at $%
t=22.$ }}}
\label{fig-pattern}
\end{figure}

Current approaches to address the pattern formation and the existence of
traveling front solutions in evolutionary games have traditionally employed
reaction-diffusion partial differential equations; such models are typically
obtained by adding a constant coefficient diffusion term to the mean-field
equations, which in turn models fast but homogeneous spatial diffusion of
agents\citep{Hutson92, Vickers93, Hofbauer97, HofbauerWave97, Durrett99}:%
\begin{equation}
\frac{\partial }{\partial t}f_{t}(u,i)=\tsum\limits_{k\in S}\mathbf{c}%
^{M}(k,i,f)f_{t}(u,k)-f_{t}(u,i)\tsum\limits_{k\in S}\mathbf{c}%
^{M}(i,k,f)+\epsilon \Delta f.  \label{intro-PDE}
\end{equation}%
In contrast, in our scaling-limit approach, the spatial effects are
introduced at the microscopic level and lead to diffusive effects which
differ markedly from equation to equation and are, in general, density
dependent. This introduces a number of new interesting spatial structures
which are absent in reaction-diffusion equations.

The paper is organized as follows. In Section 2 we introduce the stochastic
process and the scaling limits, and present our main results (Section 2.3).
A heuristic derivation of the equation is given in Section 2.4 and the
relation with ODE such as (\ref{intro-ODE}) is elucidated in Section 2.5. In
Section 3 we analyze equilibrium selection and pattern formation in two
player coordination games using a combination of linear analysis and
numerical simulations. In the appendix we prove our main results.


\section{Spatial Evolutionary Games}

\subsection{Strategy Revision Processes\label{model-section}}

In models of spatial evolutionary games, agents are located at the sites of
a graph and play a normal form game with their neighbors. The graph $\Lambda 
$ is assumed here to be a subset of the integer lattice $\mathbb{Z}^{d}$. We
consider a single population playing a normal form game, but the
generalization to multiple population games is straightforward. A normal
form game is specified by a finite set of strategies $S$ and a payoff
function $a(i,j)$ which gives the payoff for a player using strategy $i\in S$
against strategy $j\in S$. Here we view a strategy as a type of behavior and
so terms, \textquotedblleft strategy\textquotedblright\ and
\textquotedblleft type\textquotedblright , are used interchangeably %
\citep{Smith82, Hofbauer98}.

The strategy of the agent at site $x\in \Lambda $ is $\sigma _{\Lambda
}\left( x\right) \in S$, and we denote\ by $\sigma _{\Lambda }=\left\{
\sigma _{\Lambda }(x):x\in \Lambda \right\} $ the configuration of
strategies for every agent in the population. With these notations, the
state space, i.e., the set of all possible configurations, is $S^{\Lambda }.$
The subscript of $\sigma =\sigma _{\Lambda }$ will be suppressed, whenever
no confusion arises. As in \citet[][chapter 6]{Young98}, we assign positive
weights $\mathcal{W}(x-y)$ to any two sites $x$ and $y$ to capture the
importance or intensity of the interaction among neighbors. Note that we
assume that these weights depend only on the relative location $x-y$ between
the players (i.e., we assume translation invariance). It is convenient to
assume that total weight that site $x$ attaches to all its neighbors is
normalized to 1, i.e., 
\begin{equation}
\sum_{y\in \Lambda }\mathcal{W}(x-y)\approx 1.  \label{eq:w-normal}
\end{equation}%
We say $y$ is a neighbor of $x$ whenever $\mathcal{W}(x-y)>0$. An individual
agent, at site $x$ with strategy $i$ given a configuration $\sigma $,
receives an average payoff 
\begin{equation}
u(x,\sigma ,i):=\sum_{y\in \Lambda }{\mathcal{W}}(x-y)a(i,\sigma (y)).
\label{avpayoff}
\end{equation}%
If we think of $\mathcal{W}$ as the probability with which an agent samples
his neighbors, then $u(x,\sigma ,i)$ is the expected payoff for an agent at $%
x$ choosing strategy $i$ if the population strategy profile is $\sigma $. Or
we can think that an agent receives an instantaneous payoff flow from his
interactions with other neighbors \citep{Blume93, Young98, Young01}.

In the special case where $\mathcal{W}(x-y)\ $is constant, the interaction
is uniform and there is no spatial structure and if there are a total of $%
n^{d}$ agents in the population, then $\mathcal{W}(x-y)\approx \frac{1}{n^{d}%
}$ because of (\ref{eq:w-normal}). On the other hand, when $\mathcal{W}(x-y)=%
\frac{1}{2d}$ if $\left\Vert x-y\right\Vert =1$ and $0$ otherwise,
interactions only arise between nearest sites \citep{Blume95, Szabo07}.

In this paper we concentrate on long range interactions where each agent
interacts with as many other agents as in the mean-field case, but the
interaction is not uniform. This limit is known as \textquotedblleft local
mean field model\textquotedblright\ \citep{Comets87} or \textquotedblleft
Kac potential\textquotedblright\ \citep{Lebowitz66, DeMasi94, Presutti09}.
More specifically, let $\mathcal{J}(x)$ be a non-negative, compactly
supported,$\ $and integrable function such that $\int \mathcal{J}(x)dx=1.$
We assume that $\mathcal{W}$ has the form:%
\begin{equation}
\mathcal{W}^{\gamma }(x-y)=\gamma ^{d}\mathcal{J}(\gamma (x-y))\,,
\label{Kac-pot}
\end{equation}%
and we will take the limit $\Lambda \nearrow \mathbb{Z}^{d}$ and $\gamma
\rightarrow 0$ in such a way that $\gamma ^{-d}\approx \left\vert \Lambda
\right\vert \approx n^{d}.$ Here $n^{d}$ is the size of the population and $%
\left\vert \text{ }\right\vert $ denotes the cardinality. Hence the factor $%
\gamma ^{d}$ is chosen in such a way that $\sum \mathcal{W}^{\gamma
}(x-y)\approx \int \mathcal{J}(x)dx=1,$ so $\mathcal{W}^{\gamma }(x-y)$
indeed represents the intensity of interactions. Note that in (\ref{Kac-pot}%
) the interaction vanishes when $\left\Vert x-y\right\Vert \geq R\gamma
^{-1} $ if $\mathcal{J}$ is supported on the ball of radius $R$. So as $%
\gamma \rightarrow 0$, an agent interacts very weakly but with a growing
number of neighbors in the population. Frequently, in examples and
simulations, we consider localized Gaussian-like kernels $\mathcal{J}%
(x)\propto \exp (-b\left\Vert x\right\Vert ^{2})$\ for some $b>0$. 

The time evolution of the system is given by a continuous time Markov
process $\{\sigma _{t}\}$ with state space $S^{\Lambda }$, in which each
agent receives, independently of all the other agents, a strategy revision
opportunity in response to his own exponential $``$alarm
clock\textquotedblright\ with rate $1$, and then updates his strategy
according to a rate $c(x,\sigma ,k)$ $-$ the rate with which agent $x$
switches to strategy $k$ when the configuration is $\sigma $. This process
is then characterized by a generator 
\begin{equation}
\left( Lg\right) (\sigma )=\sum_{x\in \Lambda }\sum_{k\in S}c(x,\sigma
,k)\left( g(\sigma ^{x,k})-g(\sigma )\right)  \label{gen-sigma}
\end{equation}%
where $g$ is a bounded function on $S^{\Lambda }$ and 
\begin{equation*}
\sigma ^{x,k}(y)=\left\{ 
\begin{tabular}{ll}
$\sigma (y)$ & if $y\neq x$ \\ 
$k$ & if $y=x$%
\end{tabular}%
\right.
\end{equation*}%
represents a configuration where the agent at site $x$ switches from his
current strategy $\sigma (x)$ to a new strategy $k$.

If $c\left( x,\sigma ,k\right) >0$ for all $x$, $\sigma $ and $k$, the
stochastic process can introduce any new strategy, even if it is not
currently used in the population. We call this case \emph{innovative}
following \citet{Szabo07}. If $c(x,\sigma ,k)=0$ for some $x,$ $\sigma ,$
and $k$ and hence a strategy which is not present in the population does not
appear under the dynamics, we call the dynamics \emph{non-innovative}.
Furthermore if, upon switching, agents only consider the payoff of the new
strategy we call the dynamics \emph{targeting}. In contrast, when agents'
decision depends on the payoff difference between the current strategy and
the new strategy we call the dynamics \emph{comparing}.

Precise technical assumptions for the strategy revision rates will be
discussed later (Conditions \textbf{C1}$-$\textbf{C3 } in Section \ref%
{main-theorem}); here, we give only a number of concrete examples commonly
used in applications, and to which our results will apply. Several more
examples of rates are discussed in the Appendix and the assumptions on our
rates are satisfied by virtually all dynamics commonly used in evolutionary
game theory 
\citep[see][for a
more comprehensive discussion of  rates and more examples]{Sandholm08}. 
To define the rate we introduce 
\begin{equation*}
w(x,\sigma ,k):=\sum_{y\in \Lambda }\mathcal{W}(x-y)\delta (\sigma (y),k)
\end{equation*}%
where $\delta (i,j)=1$ if $i=j$ and $0$ otherwise; $w(x,\sigma ,k)$ can be
interpreted as the probability for an agent at site $x$ to find a neighbor
with strategy $k$, provided the neighbors are sampled with the probability
distribution $\mathcal{W}(x-y)$. 
Let also $F$ denote a non-negative function. \newline

\noindent \underline{\textbf{Examples of Rates}}

\medskip \noindent $\bullet$ \textbf{Comparing and Innovative}: The rate is $%
c(x,\sigma ,k)=F(u(x,\sigma ,k)-u(x,\sigma ,\sigma (x)))$ and is comparing
and innovative provided $F>0$. When 
\begin{equation*}
c(x,\sigma ,k)=\min \left\{ 1,\exp \left( u(x,\sigma ,k\right) -u(x,\sigma
,\sigma (x)))\right\} ,
\end{equation*}
the rate corresponds to a generalization of the well-known Metropolis
algorithm. In particular, when the normal form game is a potential game, the
corresponding Markov chain satisfies detailed balance and its invariant
distribution can be explicitly expressed as a Gibbs distribution %
\citep{Szabo07}.

\medskip \noindent $\bullet$ \textbf{Targeting and Innovative}: This case
arises if $c(x,\sigma ,k)=F(u(x,\sigma ,k))$ and $F>0.$ If 
\begin{equation}
c(x,\sigma ,k)=\frac{\exp (u(x,\sigma ,k))}{\sum_{l}\exp (u(x,\sigma ,l))}
\label{logit}
\end{equation}%
the rate is called \textquotedblleft logit choice rule\textquotedblright\ in
the game theory literature, and it is a generalization of the
\textquotedblleft Gibbs sampler\textquotedblright\ in statistics and of the
\textquotedblleft Glauber dynamics \textquotedblright of physics. The Markov
process, in this case too, satisfies the detailed balance for potential
games and has the same Gibbs invariant distribution as Metropolis dynamics.

\medskip \noindent $\bullet$ \textbf{Comparing and Non-innovative: }The rate
has the form 
\begin{equation}
c(x,\sigma ,k)=w(x,\sigma ,k)\left[ F(u(x,\sigma ,k)-u(x,\sigma ,\sigma (x))%
\right] \,.  \label{rep-imitative}
\end{equation}%
This specifies an imitation process: the first factor $w(x,\sigma ,k)$ is
the probability for an agent at $x$ to choose an agent with strategy $k$ and
the second factor $F(u(x,\sigma ,k)-u(x,\sigma ,\sigma (x)))$ gives the rate
at which the new strategy $k$ is adopted 
\citep{Weibull95, Benaim03,
Hofbauer03}. The standard example is 
\begin{equation}
c(x,\sigma ,k)=w(x,\sigma ,k)\left[ u(x,\sigma ,k)-u(x,\sigma ,\sigma (x))%
\right] _{+}  \label{rate-rep1}
\end{equation}%
where $\left[ s\right] _{+}=\max \left\{ s,0\right\} $. The rate (\ref%
{rate-rep1}), in the mean-field case, gives rise to the famous replicator
ODEs as the deterministic approximation. More generally if $F$ in (\ref%
{rep-imitative}) satisfies 
\begin{equation}
F(s)-F(-s)=s\,,  \label{con_rep}
\end{equation}%
%
%
%
%
%
%
%
%
%
%
%
%
%
%
%
%
%
%
then the corresponding mean field ODE is the replicator dynamics. Note that $%
\left[ s\right] _{+}$ satisfies condition (\ref{con_rep}). In the paper we
frequently use 
\begin{equation}
F_{\kappa }\left( s\right) :=\frac{1}{\kappa }\log (\exp (\kappa s)+1)
\label{reg_rep}
\end{equation}%
and it is easily seen that the function (\ref{reg_rep}) satisfies (\ref%
{con_rep}) and converges uniformly to $\left[ s\right] _{+}$ as $\kappa
\rightarrow \infty ;$ hence (\ref{reg_rep}) can serve as a smooth
regularization of (\ref{rate-rep1}), and we name a replicator equation using
(\ref{reg_rep}) by a regularized replicator equation.

\subsection{Mesoscopic scaling and long-range interactions\label%
{domain-section}}

We will consider the limit $\gamma \rightarrow 0$ in equation (\ref{Kac-pot}%
); i.e., the interaction range $\frac{1}{\gamma }$ becomes infinite and the
agent at $x$ interacts with a growing number of agents. In order to obtain a
limiting equation, we rescale space and take a continuum limit. Let $\mathbb{%
A\subset R}^{d}$ (mesoscopic domain) and $\mathbb{A}^{\gamma }:=\mathbb{%
\gamma }^{-1}\mathbb{A\cap Z}^{d}$ (microscopic domain). If $\mathbb{A}$ is
a smooth region in $\mathbb{R}^{d}$, then $\mathbb{A}^{\gamma }$ contains $%
\gamma ^{-d}|\mathbb{A}|$ lattice sites and as $\gamma \rightarrow 0,$ $%
\gamma \mathbb{A}^{\gamma }$ approximates $\mathbb{A}$.

At the mesoscopic scale the state of the system is described by the \emph{%
strategy profile} function $f_{t}(u,i)$ -- the density of agents with
strategy $i$ at $u$. The bridge between microscopic and mesoscopic scale is
given by the \emph{empirical measure} $\pi ^{\gamma }(\sigma ;du,di)$
defined as follows. For $(v,j)\in \mathbb{A}\times S$, let $\delta
_{(v,j)}(du,di)$ denote the Dirac delta measure at $(v,j)$.

\begin{definition}[Empirical measure]
The empirical measure $\pi ^{\gamma }:S^{\mathbb{A}^{\gamma }}\rightarrow 
\mathcal{P}(\mathbb{A}\times S)$ is the map given by 
\begin{equation}
\sigma \mapsto \pi ^{\gamma }(\sigma ;du,di):=\frac{1}{\left\vert \mathbb{A}%
^{\gamma }\right\vert }\sum_{x\in \mathbb{A}^{\gamma }}\delta _{\left(
\gamma x,\sigma (x)\right) }(dudi)  \label{empirical}
\end{equation}%
where $\mathcal{P}(\mathbb{A}\times S)~$denotes the set of all probability
measures on $\mathbb{A}\times S.$
\end{definition}

Our main result is to show that, under suitable conditions, 
\begin{equation}
\pi ^{\gamma }(\sigma _{t};du,di)\rightarrow f_{t}(u,i)du\quad\quad \text{
in probability}  \label{res-conv}
\end{equation}%
and $f_{t}(u,i)$ satisfies an integro-differential equation. Since $\sigma
_{t}$ is the state of the microscopic system at time $t,$ $\pi ^{\gamma
}(\sigma _{t};du,di)$ is a random measure, while $f_{t}(u,i)$ is a solution
of a deterministic equation. So (\ref{res-conv}) is in a sense a form of a
time-dependent law of large numbers. For this result to hold we need to
assume that the initial distribution for $\sigma _{0}$ is sufficiently
regular. For our purpose it will be enough to assume that the distribution
of $\sigma _{0}$ is given by a \emph{product measure with a slowly varying
parameter}.

\begin{definition}[Product measures with a slowly varying parameter]
\textsl{\label{product-measures copy(1)} }The collection of measure $\{\mu
^{\gamma }\}$ is called a family of product measures with a slowly varying
parameter if $\mu _{\gamma }:=\bigotimes\nolimits_{x\in \mathbb{A}^{\gamma
}}\rho _{x}$ on $S^{\mathbb{A}^{\gamma }}$ and there exists a profile $%
f(u,i) $ such that 
\begin{equation*}
\rho _{x}\left( \left\{ i\right\} \right) =f(\gamma x,i)
\end{equation*}
\end{definition}

More general initial distributions can also be accommodated %
\citep[See][]{Kipnis99}, provided they can be associated to a mesoscopic
strategy profile. Furthermore, below we will consider two types of boundary
conditions:

\medskip

\noindent \textbf{(a) Periodic Boundary Conditions.} Let $\mathbb{A=[}%
0,1]^{d}$ . We assume that $\mathbb{A}^{\gamma }=\gamma ^{-1}\mathbb{A\cap Z}%
^{d}=[0,\frac{1}{\gamma }]^{d}\mathbb{\cap Z}^{d},$ and then extend the
profile $f_{t}(u,i)$ and the configuration $\sigma _{\mathbb{A}^{\gamma }}$
periodically on $\mathbb{R}^{d}$ and $\mathbb{Z}^{d}.$ Equivalently we can
identify $\mathbb{A}$ with the torus $\mathbf{T}^{d}$ and similarly $\mathbb{%
A}^{\gamma }$ with the discrete torus $\mathbf{T}^{d,\gamma }.$

\medskip

\noindent \textbf{(b) Fixed Boundary Conditions.} In applications it is also
useful to consider the case where the configurations in some regions do not
change with time. Let $\Lambda \subset \Gamma \subset \mathbb{R}^{d}$ be a
region. We think of $\partial \Lambda :=$ $\Gamma \backslash \Lambda $ as
the \textquotedblleft boundary region\textquotedblright\ where agents do not
revise their strategies. Since we consider compactly supported $\mathcal{J}$ 
we can take, for suitable $r>0$ 
\begin{equation*}
\Gamma :=\bigcup\limits_{u\in \Lambda }B(u,r),
\end{equation*}%
where $B$ denotes a ball centered at $u$ with radius $r.$ We define
microscopic spaces, $\Lambda ^{\gamma }:=\gamma ^{-1}\Lambda \cap \mathbb{Z}%
^{d}$ and $\Gamma ^{\gamma }:=\gamma ^{-1}\Gamma \cap \mathbb{Z}^{d}.$

\subsection{Main results\label{main-theorem}}

Let us consider first the case with periodic boundary conditions. %
Our assumptions on the interactions weights $\mathcal{W}^\gamma\left(
x-y\right)$ are

\medskip

\noindent \textbf{(F)} $\ \mathcal{W}^{\gamma }\left( x-y\right) =\gamma ^{d}%
\mathcal{J}\left( \gamma (x-y)\right) $ where $\mathcal{J}$ is nonnegative,
continuous with compact support, and normalized, $\int \mathcal{J(}x)dx=1$.

\medskip

\noindent The mesoscopic strategy profiles are described by functions $%
f(u,i)\in \mathcal{M(}\mathbf{T}^{d}\times S)$ where%
\begin{equation*}
\mathcal{M(}\mathbf{T}^{d}\times S):=\left\{ f\in L^{\infty }(\mathbf{T}%
^{d}\times S\mathbb{)}:0\leq f(u,i)\leq 1,\text{ }\sum\nolimits_{i}f(u,i)=1%
\text{ for all }u\in \mathbf{T}^{d}\right\} \,.
\end{equation*}
Let $\left\{ \sigma _{t}^{\gamma}\right\} _{t\geq 0}$ be the stochastic
process with generator\textsl{\ }$L^{\gamma }$ given by 
\begin{equation}
(L^{\gamma }g)(\sigma )=\sum_{x\in \mathbf{T}^{d,\gamma }}\sum_{k\in
S}c^{\gamma }(x,\sigma ,k)\left( g(\sigma ^{x,k})-g(\sigma )\right)
\label{generator-per}
\end{equation}%
for $g$ \ $\in L^{\infty }(S^{\mathbf{T}^{d,\gamma }})$. 
Our assumptions on the strategy revision rate $c^{\gamma }(x,\sigma ,k)$ are
that there exists a real-valued function 
\begin{equation*}
\mathbf{c}(u,i,k,\pi ),\text{ }u\in \mathbf{T}^{d},\text{ }i,k\in S,\text{ }%
\pi \in \mathcal{P}(\mathbf{T}^{d}\times S)
\end{equation*}%
such that

\medskip \noindent \textbf{(C1)} $\mathbf{c}(u,i,k,\pi )$ satisfies%
\begin{equation*}
\lim_{\gamma \rightarrow 0}\sup_{x\in \mathbf{T}^{d,\gamma },\sigma \in S^{^{%
\mathbf{T}^{d,\gamma }}},k\in S}\left\vert c^{\gamma }(x,\sigma ,k)-\mathbf{c%
}(\gamma x,\sigma (x),k,\pi ^{\gamma }(\sigma ))\right\vert =0\,,
\end{equation*}

\medskip \noindent \textbf{(C2)} $\mathbf{c}(u,i,k,\pi )$ is\ uniformly
bounded: i.e., there exists $M$ such that%
\begin{equation*}
\sup_{u\in \mathbf{T}^{d},i,k\in S,\pi \in \mathcal{P(}\mathbf{T}^{d}\times
S)}\left\vert \mathbf{c}(u,i,k,\pi )\right\vert \leq M\,,
\end{equation*}

\medskip \noindent \textbf{(C3)} $\mathbf{c}(u,i,k,fdudi)$ satisfies a
Lipschitz condition with respect to $f$ : i.e., there exists $L$ such that
for all $f_{1},$ $f_{2}\in \mathcal{M}(\mathbf{T}^{d}\times S)$%
\begin{equation*}
\sup_{u\in \mathbf{T}^{d},i,k\in S}\left\vert \mathbf{c}(u,i,k,f_{1}dudi)-%
\mathbf{c}(u,i,k,f_{2}dudi)\right\vert \leq L\left\Vert
f_{1}-f_{2}\right\Vert _{L^{1}(\mathbf{T}^{d}\times S)}\text{ } .
\end{equation*}

\medskip

In the appendix we show that all the classes of rates given in the examples
in Section \ref{model-section} and several others satisfy conditions \textbf{%
C1}$-$\textbf{C3}. For example, if $c^{\gamma }(x,\sigma ,k)=F(u(x,\sigma
,k)-u(x,\sigma ,\sigma (x))),$ with $u(x,\sigma ,i)=\sum_{y}{\mathcal{%
W^{\gamma }}}(x-y)a(i,\sigma (y))$ and the weights $\mathcal{W^{\gamma }}%
(x-y)$ satisfy condition $\mathbf{F}$, then 
\begin{equation}
\mathbf{c}(u,i,k,f)=F\left( \tsum\limits_{l\in S}a(k,l)\mathcal{J}\ast
f(u,l)-a(i,l)\mathcal{J}\ast f(u,l)\right)  \label{conti-rate1}
\end{equation}%
satisfies condition $\mathbf{\ C1-C3}$ (recall that $\mathcal{J}\ast
f(u,l):=\int_{\mathbf{T}^{d}}\mathcal{J}(u-v)f(v,l)dv$ is the convolution of 
$\mathcal{J}$ with $f$). A slight modification of (\ref{conti-rate1}) yields
corresponding expressions for each choice of $c^{\gamma }(x,\sigma ,k)$ in
Section \ref{model-section} (see the appendix for the complete list of these
rates).\ In Section 2.4 we will explain how to obtain the function $\mathbf{c%
}(u,i,k,\pi )$ from the rates.

The stochastic process $\sigma _{t}^{\gamma }$ induces a measure-valued
stochastic process $\pi _{t}^{\gamma }:=\pi ^{\gamma }(\sigma _{t}^{\gamma
},dudi)$ for the empirical given in equation (\ref{empirical}). Theorem \ref%
{them-long-range-perBC} shows that the stochastic process $\pi _{t}^{\gamma
} $ has a deterministic limit.

\begin{theorem}[Long Range Interaction and Periodic Boundary Condition]
\label{them-long-range-perBC} Suppose the revision rate satisfies $\mathbf{C1%
}-\mathbf{C}3$. 
Let $f\in \mathcal{M}(\mathbf{T}^{d}\times S)$ and assume that the initial
distribution $\left\{ \mu ^{\gamma }\right\} _{\gamma }$ is a family of
measures with a slowly varying parameter associated to the profile of $f$.
Then for every $T>0$%
\begin{equation*}
\lim_{\gamma \rightarrow 0}\pi _{t}^{\gamma }(du,di)=f_{t}(u,i)\,dudi\text{
in probability }
\end{equation*}%
\ uniformly for $t\in \lbrack 0,T]$ and $f_{t}$ satisfies the following
differential equation: for $u\in \mathbf{T}^{d},i\in S$%
\begin{eqnarray}
\frac{\partial }{\partial t}f_{t}(u,i) &=&\tsum\limits_{k\in S}\mathbf{c}%
(u,k,i,f)f_{t}(u,k)-f_{t}(u,i)\tsum\limits_{k\in S}\mathbf{c}(u,i,k,f)
\label{eq-de-per} \\
f_{0}(u,i) &=&f(u,i)  \notag
\end{eqnarray}
\end{theorem}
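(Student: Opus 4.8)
The plan is to establish this as a hydrodynamic-type limit / time-dependent law of large numbers, following the standard martingale-and-propagation-of-chaos strategy adapted to the empirical-measure process $\pi_t^\gamma$. First I would write down the action of the generator $L^\gamma$ on the natural test functions, namely the pairings $\langle \pi_t^\gamma, \phi\rangle = \frac{1}{|\mathbf{T}^{d,\gamma}|}\sum_{x}\phi(\gamma x,\sigma_t(x))$ for $\phi \in C(\mathbf{T}^d\times S)$. A direct computation from \eqref{generator-per} gives
\begin{equation*}
(L^\gamma \langle \cdot,\phi\rangle)(\sigma) = \frac{1}{|\mathbf{T}^{d,\gamma}|}\sum_{x}\sum_{k\in S} c^\gamma(x,\sigma,k)\big(\phi(\gamma x,k)-\phi(\gamma x,\sigma(x))\big)\,.
\end{equation*}
By Dynkin's formula the process $M_t^\phi := \langle \pi_t^\gamma,\phi\rangle - \langle \pi_0^\gamma,\phi\rangle - \int_0^t (L^\gamma\langle\cdot,\phi\rangle)(\sigma_s)\,ds$ is a martingale, and I would estimate its quadratic variation. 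Because each jump changes $\langle\pi^\gamma,\phi\rangle$ by $O(1/|\mathbf{T}^{d,\gamma}|)$ and the total jump rate is $O(|\mathbf{T}^{d,\gamma}|)$ (using the uniform bound $M$ from \textbf{C2}), the predictable quadratic variation is $O(1/|\mathbf{T}^{d,\gamma}|)=O(\gamma^d)\to 0$. Hence by Doob's inequality $\sup_{t\le T}|M_t^\phi|\to 0$ in $L^2$, so the martingale term is asymptotically negligible.

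Next I would identify the limiting drift. Using \textbf{C1} to replace $c^\gamma(x,\sigma,k)$ by $\mathbf{c}(\gamma x,\sigma(x),k,\pi^\gamma(\sigma))$ up to a uniform error that vanishes with $\gamma$, the integral term becomes, after grouping the sum over sites $x$ by their current strategy and rewriting as an integral against $\pi_s^\gamma$,
\begin{equation*}
\int_0^t \int_{\mathbf{T}^d\times S} \sum_{k\in S}\mathbf{c}(u,i,k,\pi_s^\gamma)\big(\phi(u,k)-\phi(u,i)\big)\,\pi_s^\gamma(du,di)\,ds + o(1)\,.
\end{equation*}
Thus any weak limit point $\pi_t$ of the sequence $\pi_t^\gamma$ must satisfy the closed weak equation obtained by dropping the $o(1)$ and the martingale, which upon choosing $\pi_t = f_t\,dudi$ and testing against arbitrary $\phi$ is exactly the weak form of \eqref{eq-de-per}. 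To make "weak limit point" rigorous I would first prove relative compactness (tightness) of the laws of $\{\pi^\gamma_\cdot\}$ in the Skorokhod space $D([0,T],\mathcal{P}(\mathbf{T}^d\times S))$ — the target space is compact since $\mathbf{T}^d\times S$ is compact, so tightness reduces to an Aldous-type modulus-of-continuity estimate, which follows from the same $O(\gamma^d)$ jump-size and bounded-rate control used for the martingale. The initial condition $\pi_0^\gamma \to f\,dudi$ in probability is a law of large numbers for the product measure with slowly varying parameter from Definition \ref{product-measures copy(1)}, provable by a second-moment/Chebyshev estimate since the $\rho_x$ are independent.

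The remaining and, I expect, genuinely hard step is uniqueness: I must show the limiting weak equation has a unique solution in $\mathcal{M}(\mathbf{T}^d\times S)$, so that the whole sequence converges (not just subsequences) and the limit is the deterministic $f_t$. Here the Lipschitz hypothesis \textbf{C3} is the crucial input: writing the weak equation in mild/integrated form and taking two solutions $f_t^{(1)},f_t^{(2)}$ with the same initial data, I would bound $\frac{d}{dt}\|f_t^{(1)}-f_t^{(2)}\|_{L^1}$ by a constant times $\|f_t^{(1)}-f_t^{(2)}\|_{L^1}$, using \textbf{C2} to control the gain/loss rates and \textbf{C3} to control the difference of the rate functionals, and then conclude via Gr\"onwall's inequality that the two solutions coincide. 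Existence of a solution follows either from a Picard iteration in $C([0,T],L^1)$ using the same Lipschitz bound, or simply by exhibiting the limit point constructed above. Finally, uniqueness of the limit point plus tightness upgrades the subsequential convergence in distribution to convergence in probability to the deterministic limit $f_t\,dudi$, uniformly on $[0,T]$; the uniformity in $t$ comes from the Skorokhod-space convergence together with the continuity of $t\mapsto f_t$. The main obstacle throughout is controlling the nonlinearity of $\mathbf{c}$ in its measure argument — it is precisely the interplay of the convolution structure in \eqref{conti-rate1} with the Lipschitz bound \textbf{C3} that closes the estimate and makes the Gr\"onwall argument work.
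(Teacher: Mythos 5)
Your proposal follows essentially the same route as the paper's appendix: the Dynkin martingale with $O(\gamma^d)$ quadratic variation killed by Doob's inequality, Aldous-criterion tightness of the laws of $\pi^\gamma_\cdot$ in $D([0,T],\mathcal{P}(\mathbf{T}^d\times S))$, identification of limit points as weak solutions via \textbf{C1}, a Gr\"onwall uniqueness argument driven by \textbf{C3}, and the final upgrade from subsequential weak convergence to convergence in probability. The one step you elide --- and which the paper proves as a separate lemma before invoking uniqueness --- is that every limit point is absolutely continuous with respect to $du\,di$ (an easy duality bound $|\langle \pi_t^{*},g\rangle|\le \int |g|\,du\,di$ inherited from the empirical measures); this is needed because \textbf{C3}, and hence your Gr\"onwall estimate, applies only to density-valued solutions, so without it the uniqueness argument does not a priori rule out singular measure-valued limit trajectories.
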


Next let us consider fixed boundary conditions as in Section \ref%
{domain-section}. In this case,\textsl{\ }the stochastic process, $\{\sigma
_{t}^{\gamma }\}_{t\geq 0}$, is specified by the generator $L^{\gamma }$%
\begin{equation}
(L^{\gamma }g)(\sigma _{\Gamma ^{\gamma }})=\sum_{x\in \Lambda ^{\gamma
}}\sum_{k\in S}c^{\gamma }(x,\sigma _{\Gamma ^{\gamma }},k)(g(\sigma
_{\Gamma ^{\gamma }}^{x,k})-g(\sigma _{\Gamma ^{\gamma }}))
\label{eq-generator}
\end{equation}%
for $g$ \ $\in L^{\infty }(S^{\Gamma ^{\gamma }}).$ Note that the summation
in terms of $x$ in (\ref{eq-generator}) is taken over $\Lambda ^{\gamma }$,
which represents the fact that only individuals in $\Lambda ^{\gamma }$
revise their strategies, whereas the rate depends on the configuration in
entire $\Gamma ^{\gamma }$. For a given $f\in \mathcal{M}$, we define its
restriction on $\Lambda $, $f_{\Lambda }(u,i):$ $f_{\Lambda }(u,i)=f(u,i)$
if $u\in \Lambda $ and $f_{\Lambda }(u,i)=$ 0 if $u\in \Lambda ^{C}$. 

\begin{theorem}[Long Range Interaction and Fixed Boundary Condition]
\label{thm-long-range-fixedBC} Suppose the revision rate satisfies $\mathbf{%
C1}-\mathbf{C3}$. 
Let $f\in \mathcal{M}(\Gamma ^{d}\times S)$ and assume that the initial
distribution $\left\{ \mu ^{\gamma }\right\} _{\gamma }$ is a family of
measures with a slowly varying parameter associated to the profile of $f.$
Then for every $T>0$%
\begin{equation*}
\lim_{\gamma \rightarrow 0}\pi _{t}^{\gamma }(du,di)=\frac{1}{\left\vert
\Gamma \right\vert }f_{t}(u,i)\,dudi\text{ in probability }
\end{equation*}
\ uniformly for $t\in \lbrack 0,T]$ and $f_{t}=f_{\Lambda ,t} + f_{\partial
\Lambda ,t}$ satisfies the following differential equation: for $u\in \Gamma
,i\in S$%
\begin{eqnarray}
\frac{\partial }{\partial t}f_{\Lambda ,t}(u,i) &=&\tsum\limits_{k\in S}%
\mathbf{c}(u,k,i,f)f_{\Lambda ,t}(u,k)-f_{\Lambda ,t}(u,i)\tsum\limits_{k\in
S}\mathbf{c}(u,i,k,f)  \label{lf-de} \\
f_{0}(u,i) &=&f(u,i)  \notag
\end{eqnarray}
\end{theorem}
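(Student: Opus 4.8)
The plan is to follow the proof of Theorem \ref{them-long-range-perBC} step for step, adapting each stage to the fact that only the agents in $\Lambda^{\gamma}$ revise their strategies while those in $\partial\Lambda=\Gamma\setminus\Lambda$ stay frozen. The starting point is to fix a test function $\phi\in C(\Gamma\times S)$ and follow the scalar observable $\langle\pi_{t}^{\gamma},\phi\rangle=\frac{1}{|\Gamma^{\gamma}|}\sum_{x\in\Gamma^{\gamma}}\phi(\gamma x,\sigma_{t}^{\gamma}(x))$. Dynkin's formula decomposes this observable into its initial value, a bounded-variation drift $\int_{0}^{t}(L^{\gamma}\Phi)(\sigma_{s}^{\gamma})\,ds$ with $\Phi(\sigma)=\langle\pi^{\gamma}(\sigma),\phi\rangle$, and a mean-zero martingale $M_{t}^{\gamma}$.

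First I would compute the drift. Because the generator (\ref{eq-generator}) sums only over $x\in\Lambda^{\gamma}$, its action on $\Phi$ produces a reaction term supported on $\Lambda$,
\begin{equation*}
(L^{\gamma}\Phi)(\sigma)=\frac{1}{|\Gamma^{\gamma}|}\sum_{x\in\Lambda^{\gamma}}\sum_{k\in S}c^{\gamma}(x,\sigma,k)\bigl(\phi(\gamma x,k)-\phi(\gamma x,\sigma(x))\bigr).
\end{equation*}
Condition \textbf{C1} lets me replace $c^{\gamma}(x,\sigma,k)$ by $\mathbf{c}(\gamma x,\sigma(x),k,\pi^{\gamma}(\sigma))$ up to a uniform $o(1)$ error, and then the convergence of the empirical measure together with the continuity and boundedness furnished by \textbf{C2}--\textbf{C3} identifies the limiting drift as the weak form of the right-hand side of (\ref{lf-de}), with the spatial integral restricted to $\Lambda$. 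Since the sites in $\partial\Lambda^{\gamma}$ never update, they contribute no reaction term, which is precisely why the component $f_{\partial\Lambda,t}$ is time-independent and equal to the initial boundary profile.

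Next I would dispose of the martingale. A direct computation of its predictable quadratic variation, bounded by the uniform rate estimate \textbf{C2}, gives $\langle M^{\gamma}\rangle_{T}=O(|\Gamma^{\gamma}|^{-1})=O(\gamma^{d})$, so Doob's maximal inequality yields $\sup_{t\le T}|M_{t}^{\gamma}|\to 0$ in probability. Together with the convergence of the initial empirical measure to $\frac{1}{|\Gamma|}f(u,i)\,dudi$ --- which follows from the slowly varying product-measure hypothesis and a law of large numbers applied on $\Lambda^{\gamma}$ and $\partial\Lambda^{\gamma}$ separately --- this shows that every limit point of the family $\{\pi_{\cdot}^{\gamma}\}$ is a weak solution of (\ref{lf-de}); tightness of the family follows from \textbf{C2} and the martingale bound exactly as for Theorem \ref{them-long-range-perBC}. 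The Lipschitz estimate \textbf{C3} then guarantees uniqueness of the solution of (\ref{lf-de}) in $\mathcal{M}(\Gamma\times S)$, so the limit is deterministic and the convergence improves to convergence in probability, uniformly on $[0,T]$.

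The step I expect to demand the most care is the boundary interaction inside the drift: although the sites of $\partial\Lambda$ do not update, they still enter the rate $\mathbf{c}(u,i,k,f)$ through the convolution $\mathcal{J}\ast f(u,\cdot)$ for those $u\in\Lambda$ lying within the support radius of $\mathcal{J}$ of the boundary. I must therefore establish the convergence of the empirical measure on all of $\Gamma$, not merely on $\Lambda$, so that $\mathcal{J}\ast f$ is tested against the correct frozen boundary values. The cleanest way to organize the argument is to view the fixed-boundary dynamics as the periodic-type dynamics on the enlarged domain $\Gamma$ in which the revision rate is set identically to zero on $\partial\Lambda^{\gamma}$; the machinery of Theorem \ref{them-long-range-perBC} then transfers essentially verbatim, the vanishing rate on $\partial\Lambda$ producing automatically both the frozen term $f_{\partial\Lambda,t}$ and the restriction of the reaction term to $\Lambda$ in (\ref{lf-de}).
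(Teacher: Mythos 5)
Your proposal follows essentially the same route as the paper's own proof: a Dynkin/martingale decomposition of $\langle\pi_t^{\gamma},\phi\rangle$, a quadratic-variation bound of order $\gamma^{d}$ killing the martingale, tightness of the laws, identification of limit points as weak solutions of (\ref{lf-de}) with the reaction term supported on $\Lambda$, and uniqueness via the Lipschitz condition \textbf{C3} and Gronwall. The only pieces you pass over silently are the lemma showing that limit points are absolutely continuous with respect to Lebesgue measure (needed to write the limit as a density) and the continuity-in-$t$ argument used to upgrade Skorohod convergence to convergence in probability uniformly on $[0,T]$, but these are routine and consistent with your outline.
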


Note that $\mathbf{c}(u,k,i,f)=c(u,k,i,f_{\Lambda }+$\textbf{\ }$f_{\partial
\Lambda })$ is given by the similar formula to (\ref{conti-rate1}) with $%
\mathcal{J\ast }f(u)=\int_{\Gamma }\mathcal{J}(u-v)f(v)dv$ for $u\in \Lambda
;$ so the rates depend on $f_{\partial \Lambda }$ as well as $f_{\Lambda }.$

\subsection{Heuristic derivation of the differential equations}

In this section we justify, heuristically, the IDEs obtained in Theorems \ref%
{them-long-range-perBC} and \ref{thm-long-range-fixedBC}. For simplicity we
assume periodic boundary conditions but the other case is similar. The
differential equations (\ref{eq-de-per}) and (\ref{lf-de}) are examples of
input-output equations. In particular, by summing over the strategy set, it
is easy to see that $\tsum\nolimits_{i\in S}f_{t}(u,i)$ is independent of $t$
and therefore if $f_{0}\in \mathcal{M}$, then $f_{t}\in \mathcal{M}$ for all 
$t$. Also the space $\mathcal{M}$ can be thought of as a product over the
space of the standard strategy simplex $\Delta $ of game theory, i.e., $%
\mathcal{M}=\prod_{u\in \mathbf{T}^{d}}\ \Delta $. As shown in evolutionary
game theory textbooks \citep{Weibull95, Sandholm08} one can derive
heuristically the ODEs from corresponding stochastic processes. The main
assumption used there is that the rates depend only on the average
proportion of players with a given strategy. In this section we provide, for
the convenience of a reader, a similar heuristic derivation from microscopic
processes in the case of the spatial IDE (\ref{eq-de-per}); we replace
global average by spatially localized averages as expressed in the limit of
the empirical measure (\ref{empirical}).

For microscopic sites $x$ and $y,$ let us denote by $u=\gamma x$ and $%
v=\gamma y$ the corresponding spatial positions at the mesoscopic level. 
For the sake of exposition let us suppose that $c^{\gamma }(x,\sigma ,k)$ is
given by%
\begin{equation*}
c^{\gamma }(x,\sigma ,k)=F(u(x,\sigma ,k)-u(x,\sigma ,\sigma (x))\,.
\end{equation*}%
For any continuous function $g$ on $\mathbf{T}^{d}\times S,$ by the
definition of the empirical measure (\ref{empirical}) we have the identity 
\begin{equation*}
\frac{1}{\left\vert \mathbf{T}^{d,\gamma }\right\vert }\sum_{x\in \mathbf{T}%
^{d,\gamma }}g(\gamma x,\sigma (x))=\int_{\mathbf{T}^{d}\times S}g(u,i)\pi
^{\gamma }(\sigma ,du,di)\,.
\end{equation*}%
Since $\left\vert \mathbf{T}^{d,\gamma }\right\vert \approx \gamma ^{-d}$
and if we \emph{assume} that $\pi ^{\gamma }\left( \sigma ,du,di\right)
\rightarrow f(u,i)dudi$, we obtain 
\begin{equation}
\lim_{\gamma \rightarrow 0}\sum_{x\in \mathbf{T}^{d,\gamma }}\gamma
^{d}g(\gamma x,\sigma \left( x\right) )=\int_{\mathbf{T}^{d}\times
S}g(u,i)f(u,i)dudi.  \label{eq-conv1}
\end{equation}%
Using (\ref{eq-conv1}), we find%
\begin{equation*}
\lim_{\gamma \rightarrow 0}\sum_{x\in \mathbf{T}^{d,\gamma }}\gamma ^{d}%
\mathcal{J}\ \left( \gamma (x-y)\right) a(k,\sigma (y))=\int_{\mathbf{T}%
^{d}\times S}a(k,l)\mathcal{J}(u-v)f(v,l)dvdl=\sum_{l\in S}a(k,l)\mathcal{%
J\ast }f(u,l).
\end{equation*}%
Therefore if $\sigma (x)=i$ we then obtain 
\begin{eqnarray*}
c^{\gamma }(x,\sigma ,k) &=&F(u(x,\sigma ,k)-u(x,\sigma ,\sigma (x)) \\
&=&F\left( \sum_{y\in \mathbf{T}^{d,\gamma }}\gamma ^{d}\mathcal{J}\ \left(
\gamma x-\gamma y\right) a(k,\sigma (y))-\sum_{y\in \mathbf{T}^{d,\gamma
}}\gamma ^{d}\mathcal{J}\ \left( \gamma x-\gamma y\right) a(\sigma
(x),\sigma (y))\right) \\
&\underset{\gamma \rightarrow 0}{\longrightarrow }&F\left( \sum_{l\in
S}a(k,l)\mathcal{J\ast }f(u,l)-\sum_{l\in S}a(i,l)\mathcal{J\ast }%
f(u,l)\right) =\mathbf{c}(u,i,k,f),
\end{eqnarray*}%
and this gives equation (\ref{conti-rate1}). After having identified rates,
we can now explain how to derive the IDE (\ref{eq-de-per}). We write 
\begin{equation*}
\left\langle \pi ^{\gamma },g\right\rangle (\sigma ):=\int_{\mathbf{T}%
^{d}\times S}g(u,i)\pi ^{\gamma }(\sigma ,dudi),\text{ \ \ \ }\left\langle
f,g\right\rangle :=\int_{\mathbf{T}^{d}\times S}g(u,i)f(u,i)dudi,
\end{equation*}%
where we view $\left\langle \pi ^{\gamma },g\right\rangle (\sigma )$ as a
function of the configuration $\sigma $. The action of the generator on this
function is 
\begin{equation}
L_{\gamma }\left\langle \pi ^{\gamma },g\right\rangle (\sigma )=\sum_{k\in
S}\int_{\mathbf{T}^{d}\times S}\mathbf{c}(u,i,k,\pi ^{\gamma }(\sigma
))\left( g(u,k)-g(u,i)\right) \pi ^{\gamma }(\sigma ,dudi)\,.  \notag
\end{equation}%
>From the martingale representation theorem for Markov processes 
\citep[for
example see][]{Ethier86} there exists a martingale $M_{t}^{g,\gamma }$ such
that 
\begin{equation}
\left\langle \pi _{t}^{\gamma },g\right\rangle =\left\langle \pi
_{0}^{\gamma },g\right\rangle +\int_{0}^{t}ds\sum_{k\in S}\int_{\mathbf{T}%
^{d}\times S}\mathbf{c}(u,i,k,\pi _{s}^{_{\gamma }})\left(
g(u,k)-g(u,i)\right) \pi _{s}^{_{\gamma }}(dudi)+M_{t}^{g,\gamma }.
\label{proof-rem2}
\end{equation}%
As $\gamma \rightarrow 0,$ one proves that $M_{t}^{g,\gamma }\rightarrow 0$.
Thus if $\pi _{t}^{_{\gamma }}(dudi)\rightarrow f(t,u,i)dudi$ as $\gamma
\rightarrow 0,$ equation $($\ref{proof-rem2}$)$ becomes 
\begin{equation*}
\ \left\langle f_{t},g\right\rangle =\left\langle f_{0},g\right\rangle
+\int_{0}^{t}ds\sum_{k\in S}\int_{\mathbf{T}^{d}\times S}\mathbf{c}%
(u,i,k,f_{s})\left( g(u,k)-g(u,i)\right) f_{s}(u,i)dudi
\end{equation*}%
and upon differentiating with respect to time, we find%
\begin{equation}
<\frac{\partial f_{t}}{\partial t},g>=\sum_{k\in S}\int_{\mathbf{T}%
^{d}\times S}\mathbf{c}(u,i,k,f_{t})\left( g(u,k)-g(u,i)\right)
f_{t}(u,i)dudi  \label{weak-de}
\end{equation}%
which is the weak formulation of the IDE (\ref{eq-de-per}) obtained by
integrating over $u$ and $i.$

The proof of Theorem \ref{them-long-range-perBC} and Theorem \ref%
{thm-long-range-fixedBC}, which we present in the appendix, is a variation
on the proof given in \citet{Comets87}, \citet{Kipnis99}, and %
\citet{Katsoulakis05}. Unlike these papers, in the case of non-innovative
dynamics studied here there is no detailed balance condition, however the
mesoscopic limit of the type (\ref{res-conv}) can still be carried out in
the Kac scaling (\ref{Kac-pot}). Using the martingale representation (\ref%
{proof-rem2}), we show that $\left\{ \mathbf{Q}^{\gamma }\right\} _{\gamma
}, $ a sequence of probability laws of $\left\{ \pi _{t}^{\gamma }\right\}
_{\gamma }$, is relatively compact. We then show that all the limit points
are concentrated on the weak solutions of (\ref{lf-de}) and on measures
absolutely continuous with respect to Lebesgue measure. Finally we
demonstrate that the weak solutions of (\ref{lf-de}) are unique so that we
conclude the convergence of $\mathbf{Q}^{\gamma }$ to the Dirac measure
concentrated on the solution of (\ref{lf-de}).

\subsection{Spatially uniform interactions: Mean-field Dynamics}

The goal of this section is to show that under the assumption of uniform
interactions the spatially aggregated process is still a Markov chain (such
process is called lumpable). Furthermore our IDEs reduce then to the usual
ODEs of evolutionary game theory, as it should be. The relationships between
the various processes and differential equations is illustrated in Figure %
\ref{codia}. 
\begin{figure}[tbp]
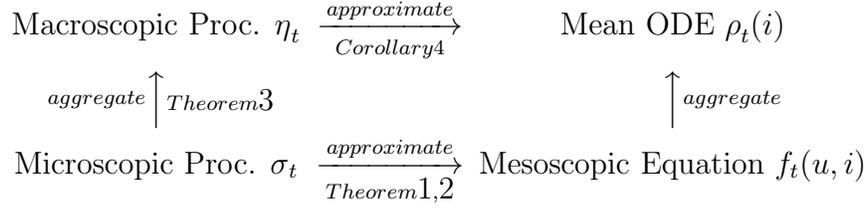

\begin{equation*}
\begin{CD} \textrm{Macroscopic Proc. } \eta_t @> approximate> Corollary 4 >
\textrm{Mean ODE } \rho_t(i) \\ @A aggregate A Theorem \ref{Th-Mean-Markov
copy(1)} A @ AA aggregate A \\ \textrm{Microscopic Proc. } \sigma_t @>
approximate > Theorem \ref{them-long-range-perBC},
\ref{thm-long-range-fixedBC} > \textrm {Mesoscopic Equation } f_t(u,i)
\end{CD}
\end{equation*}%
\caption{\textbf{The relationships between the microscopic process and the
macroscopic process and between the stochastic process and the deterministic
approximation.}}
\label{codia}
\end{figure}
Let us take periodic boundary conditions and uniform interactions, i.e., $%
\mathcal{J}\equiv 1$ on $\mathbf{T}^{d}$. Let us further define the
aggregate variables 
\begin{equation*}
\eta ^{\gamma }(i):=\frac{1}{\left\vert \mathbf{T}^{d,\gamma }\right\vert }%
\sum_{x\in \mathbf{T}^{d,\gamma }}\delta (\sigma (x),i)
\end{equation*}%
which counts the proportion of agents with strategy $i$ in the entire domain 
$\mathbf{T}^{d,\gamma }$. Note that this is obtained, equivalently, by
integrating the empirical measure $\pi ^{\gamma }(\sigma ,dudi)$ over the
spatial domain $\mathbf{T}^{d}.$ We observe that $\eta ^{\gamma }$ depends
on $\gamma $ only through the size of the domain$~n^{d}$ i.e., $n^{d}=\frac{1%
}{\gamma ^{d}}$ and $n^{d}\rightarrow \infty $ as $\gamma \rightarrow 0$.
Furthermore since $\mathcal{J}\equiv 1,$ the payoff $u(x,\sigma ,k)$ depends
on $\sigma $ only through the aggregated variable $\eta ^{n}(i)$. Indeed, we
have 
\begin{equation*}
u(x,\sigma ,k):=\frac{1}{n^{d}}\sum_{y\in \mathbf{T}^{d,n}}\sum_{l\in
S}\delta (\sigma (y),l)a(k,l)=\sum_{i\in S}a(k,i)\eta ^{n}(i)
\end{equation*}%
Thus for the strategy revision rates, if $\sigma (x)=j$ we define 
\begin{equation*}
c^{M}(j,k,\eta ^{n}):=c^{\gamma }(x,\sigma ,k)\,,
\end{equation*}%
since the right hand side is independent of $x$ and depends only on $\sigma $
through the corresponding aggregate variable $\eta ^{n}.$ Therefore $\eta
_{t}^{n}$ itself is a Markov process as we will show in Theorem \ref%
{Th-Mean-Markov copy(1)} below, and the state space for $\eta _{t}^{n}$ is
the discrete simplex 
\begin{equation*}
\Delta ^{n}=\left\{ \{\eta (i)\}_{i\in S}\,;\,\tsum_{i\in S}\eta
(i)=1\,\,,n^{d}\eta (i)\in \mathbb{N}_{+}\right\}
\end{equation*}%
To capture the transition induced by an agent's strategy switching, we write 
\begin{equation*}
\eta ^{j,k}(i)=\left\{ 
\begin{tabular}{ll}
$\eta (i)$ & if $i\neq k,j$ \\ 
$\eta (i)-\frac{1}{n^{d}}$ & if $i=j$ \\ 
$\eta (i)+\frac{1}{n^{d}}$ & if $i=k$%
\end{tabular}%
\right.
\end{equation*}%
Thus $\eta ^{j,k}$ is the state obtained from $\eta $ if one agent switches
his strategy from $j$ to $k$.

\begin{theorem}
\label{Th-Mean-Markov copy(1)} Suppose the interaction is uniform, then $%
\eta ^{n}$ is a Markov chain with state space $\Delta ^{n}$ and generator 
\begin{equation}
L^{M,n}g\left( \eta \right) =\sum_{k\in S}\sum_{j\in S}n^{d}\eta
^{n}(j)c(j,k,\eta )(g(\eta ^{n,j,k})-g(\eta ^{n}))\,.  \label{mean-gen}
\end{equation}
\end{theorem}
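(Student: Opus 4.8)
The plan is to establish \emph{strong lumpability}: the aggregate map $\Phi:\sigma\mapsto\eta$, $\eta(i)=\frac{1}{n^{d}}\sum_{x}\delta(\sigma(x),i)$, carries the microscopic chain $\sigma_{t}^{n}$ (with generator $L^{\gamma}$) into a process $\eta_{t}^{n}=\Phi(\sigma_{t}^{n})$ that is again Markov with the asserted generator. The standard route is to verify that $L^{\gamma}$ applied to any function lifted from $\Delta^{n}$ yields a function of the aggregate variable alone, and then to upgrade this identity to the Markov property via the martingale characterization.

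First I would set up the lifting. Let $g$ be an arbitrary function on $\Delta^{n}$ and regard $g\circ\Phi$ as a function on $S^{\mathbf{T}^{d,\gamma}}$. When the agent at $x$ switches from $\sigma(x)=j$ to $k$, exactly the $j$- and $k$-coordinates of the proportions change by $\mp n^{-d}$, so $\Phi(\sigma^{x,k})=\eta^{j,k}$ with $\eta=\Phi(\sigma)$ (and $\eta^{j,j}=\eta$, so the diagonal is handled by convention). Hence
\[
(L^{\gamma}(g\circ\Phi))(\sigma)=\sum_{x\in\mathbf{T}^{d,\gamma}}\sum_{k\in S}c^{\gamma}(x,\sigma,k)\bigl(g(\eta^{\sigma(x),k})-g(\eta)\bigr).
\]

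The crucial step — and the one place the uniform-interaction hypothesis is used — is to re-index the site sum by current strategy. Under $\mathcal{J}\equiv 1$ the payoff $u(x,\sigma,k)$ depends on $\sigma$ only through $\eta^{n}$, so $c^{\gamma}(x,\sigma,k)=c^{M}(j,k,\eta)$ whenever $\sigma(x)=j$, independently of the site $x$. Since precisely $n^{d}\eta(j)$ sites carry strategy $j$, grouping the sum gives
\[
(L^{\gamma}(g\circ\Phi))(\sigma)=\sum_{j\in S}\sum_{k\in S}n^{d}\eta(j)\,c^{M}(j,k,\eta)\bigl(g(\eta^{j,k})-g(\eta)\bigr)=(L^{M,n}g)(\eta),
\]
the diagonal $j=k$ terms vanishing because $\eta^{j,j}=\eta$. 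Thus $L^{\gamma}(g\circ\Phi)=(L^{M,n}g)\circ\Phi$ depends on $\sigma$ only through $\eta$, which is exactly the lumpability identity; the combinatorial re-indexing here is routine.

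Finally I would promote this identity to the Markov property. By Dynkin's formula applied to $g\circ\Phi$, the process $M_{t}:=g(\eta_{t})-g(\eta_{0})-\int_{0}^{t}(L^{M,n}g)(\eta_{s})\,ds$ is a martingale for the filtration $\mathcal{F}^{\sigma}_{t}$ of the microscopic chain. Because $M_{t}$ is adapted to the smaller filtration $\mathcal{F}^{\eta}_{t}$ generated by $\eta$, the tower property $E[M_{t}\mid\mathcal{F}^{\eta}_{s}]=E\bigl[E[M_{t}\mid\mathcal{F}^{\sigma}_{s}]\mid\mathcal{F}^{\eta}_{s}\bigr]=M_{s}$ shows $M_{t}$ is also an $\mathcal{F}^{\eta}_{t}$-martingale, so $\eta_{t}$ solves the martingale problem for $L^{M,n}$. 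As $\Delta^{n}$ is finite and the rates are bounded by Condition \textbf{C2}, $L^{M,n}$ is the generator of a genuine continuous-time Markov chain, whose martingale problem is well-posed; uniqueness then identifies $\eta_{t}^{n}$ as that chain. I expect the only delicate point to be this lumpability argument — verifying that the $\sigma$-martingale descends to an $\eta$-martingale — while everything else is bookkeeping.
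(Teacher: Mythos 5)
Your proof is correct and follows essentially the same route as the paper: both hinge on the lumpability computation that re-indexes the site sum by the current strategy $j=\sigma(x)$, using that under uniform interaction $c^{\gamma}(x,\sigma,k)$ depends on $\sigma$ only through $j$ and $\eta$, and that exactly $n^{d}\eta(j)$ sites carry strategy $j$. The only difference is that you additionally carry out the martingale-problem argument to upgrade the generator identity $L^{\gamma}(g\circ\Phi)=(L^{M,n}g)\circ\Phi$ to the Markov property of $\eta_{t}^{n}$, a step the paper asserts without elaboration; your tower-property argument for descending from the $\sigma$-filtration to the $\eta$-filtration is sound.
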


The factor $n^{d}$ in (\ref{mean-gen}) comes from the fact that in a time
interval of size 1, on average $n^{d}$ strategy switches take place, and
among those, $n^{d}\eta ^{n}(j)$ are switches from agents with type $j$. 
Theorem \ref{Th-Mean-Markov copy(1)} shows that the stochastic process with
uniform interactions coincides with multi-type birth and death process in
population dynamics \citep{Blume98, Benaim03}. In addition, following %
\citet{Kurtz70}, \citet{Benaim03}, and \citet{Darling08}, or as a special
case of our result (Corollary \ref{thm-uniform} below) we can obtain mean
field ODEs. \ Furthermore, at the mesoscopic level, the IDEs reduce to the
usual ODEs of evolutionary game theory as follows (See Figure \ref{codia}).
We note that when $\mathcal{J\equiv }$ $1,$ we can define 
\begin{equation*}
\rho (i):=\int f(u,i)du=\mathcal{J}\ast f(i)
\end{equation*}%
so $\mathbf{c}(u,k,i,f)$ is independent of $u$ and this\ again allows to
define 
\begin{equation}
\mathbf{c}^{M}(k,i,\rho ):=\mathbf{c}(u,k,i,f)  \label{mean-c}
\end{equation}%
Thus, from the IDE (\ref{eq-de-per}) we obtain%
\begin{equation}
\frac{d\rho _{t}(i)}{dt}=\sum_{k\in S}\mathbf{c}^{M}(k,i,\rho )\rho
_{t}(k)-\rho _{t}(i)\sum_{k\in S}\mathbf{c}^{M}(i,k,\rho ).  \label{mean-ode}
\end{equation}%
For example, in the case of the comparing and imitative rate we have 
\begin{equation*}
\mathbf{c}^{M}(k,i,\rho )=\rho (i)F\left( \sum_{l\in S}a(i,l)\rho
(l)-\sum_{l\in S}a(k,l)\rho (l)\right) \,.
\end{equation*}%
If $F(s)=\frac{1}{\kappa }\log \left( \exp \left( \kappa s\right) +1\right)
, $ then $F(s)-F(-s)=s$ and (\ref{mean-ode}) becomes the (imitative)
replicator dynamics. Other well-known mean field ODEs, such as logit
dynamics and Smith dynamics, are similarly derived by choosing appropriate $%
F $. Finally, as a consequence of Theorem \ref{them-long-range-perBC} we
have the following corollary which is the \emph{continuous-time} version of %
\citet{Benaim03}'s result. To state the result, we write $\left\Vert \eta
^{n}\right\Vert _{u}:=\sup\nolimits_{i\in S}\left\vert \eta
^{n}(i)\right\vert .$

\begin{corollary}[Uniform Interaction; Benaim and Weibull, 2003]
\label{thm-uniform} Suppose that the interaction is uniform and that the
strategy revision rate satisfies $\mathbf{C1-C3}$. Suppose there exists $%
\rho \in \Delta $ such that the initial condition $\eta _{0}^{n}$ satisfies 
\begin{equation*}
\lim_{n\rightarrow \infty }\eta _{0\text{ }}^{n}\,=\,\rho \text{ in
probability }
\end{equation*}%
%
%
%
%
%
%
%
%
%
%
%
%
%
%
%
%
%
%
Then for every $T>0$%
\begin{equation}
\lim_{n\rightarrow \infty }\eta _{t}^{n}(i)\longrightarrow \rho _{t}(i)\text{
in probability }
\end{equation}%
\ uniformly for $t\in \lbrack 0,T]$ and $\rho _{t}(i)$ satisfies the
following differential equation: for $i\in S$%
\begin{eqnarray}
\frac{d\rho _{t}(i)}{dt} &=&\sum_{k\in S}\mathbf{c}^{M}(k,i,\rho )\rho
_{t}(k)-\rho _{t}(i)\sum_{k\in S}\mathbf{c}^{M}(i,k,\rho )  \label{mfode} \\
\rho _{0}(i) &=&\rho (i)
\end{eqnarray}%
where $\mathbf{c}^{M}$ is given by (\ref{mean-c}). Moreover, there exist $C$
and $\epsilon _{0\text{ }}$such that $\ $for all $\epsilon \leq \epsilon
_{0},$ there exists $n_{0}$ such that for all $n\geq n_{0}$%
\begin{equation}
P\left\{ \sup_{t\leq T}\left\Vert \eta _{t}^{n}-\rho _{t}\right\Vert
_{u}\geq \epsilon \right\} \leq 2\left\vert S\right\vert e^{-\frac{%
n^{d}\epsilon ^{2}}{TC}}\,.  \label{exponential-estimate}
\end{equation}
\end{corollary}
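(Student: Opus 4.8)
The plan is to exploit Theorem~\ref{Th-Mean-Markov copy(1)}, which shows that under uniform interaction the aggregate variable $\eta^n_t$ is itself a Markov chain with the explicit generator~(\ref{mean-gen}), and to compare its trajectories directly with the solution $\rho_t$ of the mean-field ODE~(\ref{mfode}) by means of a Dynkin martingale decomposition combined with a Gronwall estimate and a concentration bound for the martingale part. The bare convergence in probability can also be read off from Theorem~\ref{them-long-range-perBC} by taking $\mathcal{J}\equiv 1$, choosing the spatially constant profile $f(u,i)=\rho(i)$ (so that the associated slowly varying product measure is i.i.d.\ across sites and $\eta^n_0\to\rho$ by the law of large numbers) and integrating the empirical measure over $u$; since $\mathcal{J}\equiv 1$ the limiting profile stays spatially uniform and equals $\rho_t(i)$. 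However, the quantitative estimate~(\ref{exponential-estimate}) is not delivered by Theorem~\ref{them-long-range-perBC}, so I would establish the whole corollary through the direct route below.

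First I would apply the generator~(\ref{mean-gen}) to the coordinate function $g(\eta)=\eta(i)$. Since a transition $\eta\mapsto\eta^{j,k}$ changes $\eta(i)$ by $+\tfrac{1}{n^d}$ when $k=i$ and by $-\tfrac{1}{n^d}$ when $j=i$, a direct computation gives $L^{M,n}\eta(i)=\sum_{k\in S}\eta(k)\,\mathbf{c}^M(k,i,\eta)-\eta(i)\sum_{k\in S}\mathbf{c}^M(i,k,\eta)=:b(i,\eta)$, which is exactly the right-hand side of~(\ref{mfode}) (the \textbf{C1}-discrepancy between the finite-$n$ rate and its limit $\mathbf{c}^M$ is uniform in the state and vanishes as $n\to\infty$, hence is absorbed for $n\ge n_0$, which accounts for the role of $n_0$ in the statement). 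Dynkin's formula then gives
\begin{equation*}
\eta_t^n(i)=\eta_0^n(i)+\int_0^t b(i,\eta_s^n)\,ds+M_t^{n,i},
\end{equation*}
with $M_t^{n,i}$ a martingale. Subtracting the integral form of the ODE and using that, by~\textbf{C2}--\textbf{C3}, $b(i,\cdot)$ is bounded and Lipschitz with a constant $K$ depending only on $M$, $L$ and $|S|$, I obtain
\begin{equation*}
\|\eta_t^n-\rho_t\|_u\le\|\eta_0^n-\rho\|_u+K\int_0^t\|\eta_s^n-\rho_s\|_u\,ds+\sup_{s\le T}\|M_s^n\|_u,
\end{equation*}
so that Gronwall's inequality yields $\sup_{t\le T}\|\eta_t^n-\rho_t\|_u\le\bigl(\|\eta_0^n-\rho\|_u+\sup_{t\le T}\|M_t^n\|_u\bigr)e^{KT}$.

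The main obstacle is the quantitative control of the martingale $M^{n,i}$. Its jumps are bounded by $1/n^d$ while the total jump rate is of order $n^d$, so by~\textbf{C2} its predictable quadratic variation satisfies $\langle M^{n,i}\rangle_T\le C'T/n^d$. To convert this into a sub-Gaussian tail I would use the exponential supermartingale attached to a pure-jump martingale: for $\lambda$ in a suitable range the process $\exp\bigl(\lambda M_t^{n,i}-\int_0^t\phi_s\,ds\bigr)$ is a supermartingale, and because each jump has size $1/n^d$ the compensator density obeys $\phi_s\le C''\lambda^2/n^d$. Doob's maximal inequality applied to this supermartingale, followed by optimization over $\lambda$, then produces a bound of the form $P\{\sup_{t\le T}|M_t^{n,i}|\ge\delta\}\le 2e^{-cn^d\delta^2/T}$. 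The delicate point is to verify that the admissible range of $\lambda$ is wide enough for the optimizer $\lambda\sim n^d\delta/T$ to remain legitimate, so that one gets the Gaussian exponent $n^d\delta^2/T$ rather than the Poisson-type exponent of genuine large deviations; this restriction on $\lambda$ is precisely what produces the threshold $\epsilon_0$ in the statement and relies essentially on the $1/n^d$ scaling of the jump size.

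Finally I would assemble the pieces. Taking $\delta=\epsilon e^{-KT}/2$, combining the Gronwall bound above with a union bound over the finitely many $i\in S$, and (when $\eta_0^n$ is not taken deterministic equal to $\rho$) invoking a matching exponential control of the initial fluctuation $\|\eta_0^n-\rho\|_u$, the various constants collapse into a single $C$ and give~(\ref{exponential-estimate}) for all $\epsilon\le\epsilon_0$ and $n\ge n_0$. Letting $n\to\infty$ in~(\ref{exponential-estimate}) then yields the asserted uniform convergence in probability, which completes the proof.
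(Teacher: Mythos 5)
Your proposal is correct and follows essentially the same route as the paper: the paper also reduces the corollary to the exponential estimate, obtains the semimartingale decomposition of $\eta_t^n(l)$ by testing the empirical-measure martingale identity (\ref{proof-rem2}) against $g(u,i)=\mathbf{1}\{i=l\}$ (equivalent to your Dynkin computation with the lumped generator (\ref{mean-gen})), controls the martingale part via the exponential supermartingale bound of Lemma \ref{lem-exp-est} — whose restriction on the tilt parameter $\theta$ is exactly the source of $\epsilon_0$, as you identify — and concludes with Gronwall, a union bound over $S$, and the choice $\delta=\tfrac13 e^{-LT}\epsilon$. The only cosmetic difference is that the paper absorbs the initial fluctuation by choosing $n_0$ so that $\Vert\eta_0^n-\rho_0\Vert_u\le\delta$ almost surely for $n\ge n_0$, rather than via a separate exponential bound on the initial condition.
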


Estimates such as (\ref{exponential-estimate}) describe the validity regimes
of the approximation by mean field models (\ref{mfode}) in terms both of
agent number $n$ and the time window $[0, T]$

\section{Equilibrium Selection and Pattern Formation}


In this section we illustrate the usefulness and the versatility of the
IDE's derived in Section \ref{main-theorem} by using a combination of linear
analysis and numerical simulations. We will consider the following equations
(see the rates in Section \ref{model-section} and at the beginning of the
Appendix)

\medskip

\noindent \textbf{(a) Logit/Glauber dynamics:} If the rate is given by (\ref%
{logit}) we obtain the IDE 
\begin{equation*}
\frac{\partial }{\partial t}f_{t}(u,i)\,=\frac{\exp \left( \sum_{l\in
S}a(i,l)\mathcal{J}\ast f_{t}(u,l)\right) }{\sum_{k\in S}\exp \left(
\sum_{l\in S}a(k,l)\mathcal{J}\ast f_{t}(u,l)\right) }-f_{t}(u,i)\,
\end{equation*}%
which generalizes the well-known logit ODE of game theory.

\medskip

\noindent \textbf{(b) Imitative replicator equation:} Let us suppose that
the rates are given by equation (\ref{rep-imitative}). Then we obtain 
\begin{eqnarray}
\frac{\partial }{\partial t}f_{t}(u,i) \,&=&\, \sum_{k \in S} \left[ f(u,k) 
\mathcal{J}\ast f(u, i) F\left( \sum_{l\in S} (a(i,l)-a(k,l)) \mathcal{J}
\ast f_t(u,l)\right) \right.  \notag \\
&& \hspace{1cm} \left. - f(u,i) \mathcal{J} \ast f(u, k) F\left( \sum_{l\in
S} (a(k,l)-a(i,l)) \mathcal{J} \ast f_t(u,l)\right) \right]  \label{imrep}
\end{eqnarray}
Note that the equation depends explicitly on $F$. This is to be contrasted
with the replicator ODE which is independent of $F$ whenever $F$ satisfies
the relation $F(t)-F(-t)=t$. This is a purely spatial effect: indeed if we
take $f(u,i)$ independent of $u$ for all $i$ then equation (\ref{imrep})
reduces to the replicator ODE.

\noindent \textbf{(c) Biological replicator equation:} Note that one can
also derive a \textquotedblleft replicator IDE\textquotedblright\ using a
\textquotedblleft biological fitness\textquotedblright argument, i.e., the
rate of change in the population of a given type is proportional to the
difference between the fitness of this type and the average fitness in the
population: 
\begin{equation*}
\frac{\partial }{\partial t}f_{t}(u,i)\,=\,f_{t}(u,i)\left[ \sum_{l\in
S}a(i,l)\mathcal{J}\ast f(u,l)-\sum_{k,l\in S}f(u,k)a(k,l)\mathcal{J}\ast
f(u,l)\right]
\end{equation*}%
This equation, while it still lacks a convincing derivation from a
microscopic stochastic process is an interesting equation in itself and it
shares many of the nice properties of the replicator ODEs.

\subsection{Spatio-temporal Linear Stability}

In this section we present the linear stability analysis of IDEs around
stationary solutions 
as a first step to understand the generation and propagation of temporal and
spatial morphologies; we refer to \cite{murray89} for numerous examples and
applications of linear stability analysis of partial differential equation
models. Let us consider the following general type of integro-differential
equations: 
\begin{equation}
\left\{ 
\begin{tabular}{ll}
$\frac{\partial f}{\partial t}=F(\mathcal{J\ast }f,f)$ & in \ $\Lambda
\times (0,T]$ \\ 
$f(0,x)=f^{0}(x)$ & on $\Lambda \times \{0\}$%
\end{tabular}%
\right. ,  \label{IDE1}
\end{equation}%
where $\Lambda \subseteq \mathbb{R}^{d}$ or $\Lambda =\mathbf{T}^{d}$, $f\in 
\mathcal{M}(\Lambda \times S),\mathcal{J\ast }f:=(\mathcal{J}\ast f_{1},$ $%
\mathcal{J}\ast f_{2},\cdots ,$ $\mathcal{J}\ast f_{n})^{T},$ and $F$ is
smooth in both arguments. First, observe that if $f$ is spatially
homogeneous, i.e., $f(u,t)=f(t)$, 
then $\mathcal{J}\ast f=f(\mathcal{J}\ast 1)=f$, and thus the IDE (\ref{IDE1}%
) reduces to the ODE 
\begin{equation*}
\frac{\partial f}{\partial t}=F(f,f)\,.
\end{equation*}%
This ODE, in turn, is exactly the ODE obtained if the interactions are
uniform $\mathcal{J}\equiv \text{const}$. This shows that the spatially
homogenous solutions of (\ref{IDE1}) are exactly the stationary solutions of
the corresponding mean-field ODE. In particular every spatially homogenous
stationary solution $f_{0}$, satisfies $F(f_{0},f_{0})=0$. We record this
observation in Lemma \ref{lem:siss}.

\begin{lemma}[Space Independent Stationary Solutions]
\label{lem:siss}$f_{0}$ is a spatially independent stationary solution to (%
\ref{IDE1}) if and only if $F(f_{0},f_{0})=0$.
\end{lemma}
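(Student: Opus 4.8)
The plan is to exploit the convolution identity for spatially constant profiles that the paragraph preceding the statement already highlights, and then to read off stationarity as the vanishing of the resulting time derivative. The single operative fact is that the normalization $\int \mathcal{J}(x)\,dx = 1$ coming from assumption \textbf{(F)} forces $\mathcal{J}\ast f_{0}=f_{0}$ whenever $f_{0}$ does not depend on the spatial variable, so that the nonlocal operator collapses to the identity on constants.

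First I would unpack the two adjectives in the statement. A \emph{spatially independent} profile is one with $f_{0}(u,i)=f_{0}(i)$ constant in $u$, and a \emph{stationary} solution is one with $\partial f_{0}/\partial t=0$. Thus $f_{0}$ is a spatially independent stationary solution of (\ref{IDE1}) precisely when it is constant in both $u$ and $t$ and satisfies the equation.

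Next I would establish the convolution reduction. Writing $f_{0}(i)$ for the constant value of the $i$th component, for each strategy $i$ the convolution is
\begin{equation*}
\mathcal{J}\ast f_{0}(u,i)=\int \mathcal{J}(u-v)f_{0}(i)\,dv=f_{0}(i)\int \mathcal{J}(u-v)\,dv=f_{0}(i),
\end{equation*}
using only that $f_{0}(i)$ is independent of $v$ together with the normalization of $\mathcal{J}$. Hence $\mathcal{J}\ast f_{0}=f_{0}$, and substituting into the right-hand side of (\ref{IDE1}) yields $\partial f_{0}/\partial t=F(f_{0},f_{0})$.

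The two implications then follow at once. If $f_{0}$ is a spatially independent stationary solution, the left-hand side above vanishes, so $F(f_{0},f_{0})=0$. Conversely, if $F(f_{0},f_{0})=0$ for a spatially constant $f_{0}$, the displayed identity gives $\partial f_{0}/\partial t=F(\mathcal{J}\ast f_{0},f_{0})=F(f_{0},f_{0})=0$, so the constant profile is indeed a stationary solution of (\ref{IDE1}). There is no genuine obstacle here; the only point requiring care is to invoke the normalization $\int\mathcal{J}=1$ (rather than any smoothness or compact-support property of $\mathcal{J}$), since that is the sole ingredient that reduces $\mathcal{J}\ast(\cdot)$ to the identity on spatially constant profiles and hence turns the IDE into the mean-field ODE $\partial f/\partial t=F(f,f)$.
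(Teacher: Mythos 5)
Your proof is correct and follows essentially the same route as the paper: the observation immediately preceding the lemma computes $\mathcal{J}\ast f = f(\mathcal{J}\ast 1) = f$ for spatially homogeneous $f$ using the normalization $\int\mathcal{J}=1$, reduces the IDE to $\partial f/\partial t = F(f,f)$, and reads off stationarity as $F(f_0,f_0)=0$. Your write-up simply makes both directions of the equivalence explicit.
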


Next we study spatiotemporal perturbations of such constant states by
linearizing around a spatially homogeneous stationary solution, $f_{0}$: let 
$f=f_{0}+\epsilon D$ where $D=D(u,t)$ and substituting into (\ref{IDE1}), we
obtain 
\begin{equation}
\epsilon \frac{\partial D}{\partial t}=F(f_{0}+\epsilon \mathcal{J}\ast
D,f_{0}+\epsilon D).  \label{eq_var_bef}
\end{equation}%
For small $\epsilon $ we expand the right hand side of equation (\ref%
{eq_var_bef}) around $\epsilon =0$, ignore the terms of order $\epsilon ^{2}$
or smaller and obtain 
\begin{equation}
\frac{\partial D}{\partial t}=M\mathcal{J}\ast D+ND  \label{var-eq}
\end{equation}%
where $\left( M\right) _{i,j}:$= $\frac{\partial F_{i}}{\partial r_{j}}$ , $%
\left( N\right) _{i,j}$:= $\frac{\partial F_{i}}{\partial s_{j}},$ and each
derivative is evaluated at $(f_{0},f_{0}).$ We can solve (\ref{var-eq})
explicitly using Fourier transform (see the appendix for details) and obtain
the following

\bigskip

\noindent \textbf{Dispersion Relation:} 
Eigenvalues for the solutions to (\ref{var-eq}) are given by 
\begin{equation}
\lambda (k)=\text{eigenvalue}(M\mathcal{\hat{J}(}k)+N)\text{ for }k\in 
\mathbb{Z}^{d}  \label{gen-dispersion}
\end{equation}%
where $\mathcal{\hat{J}}\left( k\right) =\int_{\mathbf{T}^{d}}\mathcal{J}
(u)e^{2\pi ik\cdot u}du$ are the Fourier coefficients of $\mathcal{J}$. 

\vskip .4cm

In general, dispersion relations are a useful tool to investigate the
generation and early-stage propagation of spatial phenomena for nonlinear
PDE or IDE, see for instance \cite{murray89}. In our case (\ref%
{gen-dispersion}) provides the growth (or decay) rates of approximate
solutions to equation (\ref{eq_var_bef}). As we will see later, identifying
the Fourier coefficients $k$ which are linearly unstable (i.e., $\lambda
(k)>0$) allows us to identify the regions in phase space where instabilities
occur and could lead, coupled with the nonlinear effects, to the formation
of complex spatial structures. Finally, the linear stability analysis
provides us computational benchmarks for our simulations.

\subsection{Example: Two-strategy symmetric coordination games}

We consider two-strategy symmetric coordination games with payoffs (\ref%
{avpayoff}) being normalized in a way that $a(1,2)=a(2,1)=0$ and $a(1,1)>0$, 
$a(2,2)>0$. If $p(u)\equiv f(u,1)$, using that $f(u,1)+f(u,2)=1$ we can
write a single equation for $p(u)$ and obtain an equation of the form (\ref%
{IDE1}) with 
\begin{eqnarray}
\text{\textbf{Replicator IDE} \ }F_{R}(r,s) &:&=(1-s)rF_{\kappa }\left(
\beta \left( r-\zeta \right) \right) -s(1-r)F_{\kappa }\left( \beta \left(
\zeta -r\right) \right)  \label{rep-IDE-F} \\
\text{\textbf{Logit IDE} \ \ \ \ \ \ \ \ \ }F_{L}(r,s) &:&=l(\beta \left(
r-\zeta \right) )-s  \label{logit-IDE-F}
\end{eqnarray}%
where $\zeta =\frac{a(2,2)}{a(1,1)+a_(2,2)}$\ , $\beta =a(1,1)+a(2,2),$ $%
l\left( t\right) :=\frac{1}{1+\exp \left( -t\right) },$\textsl{\ }and $%
F_{\kappa }(t):=\frac{1}{\kappa }\log \left( \exp (\kappa t)+1\right) $
(recall equation (\ref{reg_rep}))$.$ In equations (\ref{rep-IDE-F}) and (\ref%
{logit-IDE-F}) $r$ and $s$ are variables representing $\mathcal{J\ast }p$
and $p$, respectively. Note that $\zeta $ is the mixed strategy Nash
equilibrium and $\beta $ is positive.

We refer to (\ref{rep-IDE-F}) at $\kappa =\infty $ as a replicator IDE,
while we also consider the regularized replicator IDE (\ref{rep-IDE-F}) for $%
\kappa <\infty $, and refer to (\ref{logit-IDE-F}) as a logit IDE. In
addition to the conditions for $\mathcal{J\ }$stated in Section 2.3, we
assume that $\mathcal{J\ }$is symmetric: $\mathcal{J}(x)=\mathcal{J}(-x)$
for $x\in \Lambda .$

\subsubsection{Stationary solutions and their linear stability}

To find spatially homogenous stationary solutions, we need to set $%
F_{R}\left( p,p\right) =0$ and $F_{L}\left( p,p\right) =0$. $\ $Then, for
the replicator case $p=0,1,$ and $\zeta $ are three stationary solutions. In
the case of logit dynamics, using $l(\kappa z)=\frac{1}{2}+\frac{1}{2}\tanh
(\kappa \frac{z}{2})$ and changing the variable, $p\mapsto 2p-1:=u,$ the
differential equation becomes%
\begin{equation}
\frac{\partial u}{\partial t}=-u+\tanh (\frac{\beta }{4}(\mathcal{J}\ast
u+(1-2\zeta )))  \label{eq_ising}
\end{equation}%
which is the well-known Glauber mesoscopic equation 
\citep{DeMasi94, Katsoulakis97,
Presutti09} with $\beta $ being the \emph{inverse temperature}. All known
results for (\ref{eq_ising}), such as the existence of traveling wave
solutions in one space dimension and the geometric evolution of interfaces
between homogeneous stationary states in higher dimensions, are directly
applicable to the logit dynamics. Because of this connection, we have the
following characterization of stationary solutions to logit dynamics; the
proof is the consequence of (\ref{eq_ising}) and the analysis of Glauber
dynamics \citep{Presutti09} or it can easily be done directly.

\begin{lemma}
\label{logit-space} Suppose that the game is a coordination game. Then,
there exists $\beta _{C\text{ }}$ such that for $\beta <\beta _{C}$ there
exists one spatially homogenous stationary solution, $p_{1},$ and for $\beta
>\beta _{C}$ \ there exist three spatially homogenous stationary solutions, $%
p_{1},$ $p_{2},$ and $p_{3}$.
\end{lemma}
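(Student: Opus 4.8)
The plan is to reduce the statement to counting the fixed points of a scalar sigmoidal map and then to run a direct calculus analysis; the only subtlety is not the existence of \emph{some} threshold but that there is exactly \emph{one}, which will force a monotonicity computation.

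Set $c := 1-2\zeta$ and $J := \beta/4 > 0$. Since $\zeta = a(2,2)/(a(1,1)+a(2,2)) \in (0,1)$ we have $c \in (-1,1)$, in particular $|c| < 1$. Via the change of variable $u = 2p-1$ used to derive (\ref{eq_ising}), the spatially homogeneous stationary solutions $p$ of $F_L(p,p)=0$ are in bijection with the solutions $u \in (-1,1)$ of $u = \tanh(J(u+c))$, so it suffices to count the zeros of $G(u) := \tanh(J(u+c)) - u$ on $[-1,1]$. Since $|\tanh|<1$, one has $G(-1) = \tanh(J(c-1))+1 > 0$ and $G(1) = \tanh(J(c+1))-1 < 0$; hence $G$ always has at least one zero, and every zero lies in the open interval.

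Next I would describe the shape of $G$. From $G'(u) = J\,\mathrm{sech}^2(J(u+c)) - 1$: if $J \le 1$ then $G' \le J-1 \le 0$ with equality at most at $u=-c$, so $G$ is strictly decreasing and there is exactly one solution. If $J>1$, then $G'(u)>0$ is equivalent to $\cosh(J(u+c)) < \sqrt J$, i.e.\ $u \in (u_-,u_+)$ with $u_\pm = -c \pm \delta$, $\delta := J^{-1}\,\mathrm{arccosh}(\sqrt J)$; thus $G$ is decreasing, then increasing, then decreasing, with a local minimum at $u_-$ and a local maximum at $u_+$. Having three monotone branches, $G$ has at most three zeros, and — using the endpoint signs — exactly three precisely when $G(u_-)<0<G(u_+)$ and exactly one otherwise. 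Using $\tanh(\mathrm{arccosh}\sqrt J) = \sqrt{1-1/J}$ one computes $G(u_+) = \sqrt{1-1/J}+c-\delta$ and $G(u_-) = -\sqrt{1-1/J}+c+\delta$, so the two-sided condition collapses to $|c| < \psi(J)$, where
\[ \psi(J) := \sqrt{1-\tfrac1J} - \frac{\mathrm{arccosh}\sqrt J}{J}. \]

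The crux is then to show that $\psi$ is strictly increasing on $(1,\infty)$ with $\psi(1^+)=0$ and $\psi(J)\to 1$ as $J\to\infty$. Granting this, since $0\le |c|<1$ there is a unique $J_C \in [1,\infty)$ with $\psi(J_C)=|c|$; setting $\beta_C := 4J_C$ yields exactly one solution for $\beta<\beta_C$ (covering the whole range $J\le 1$, where $\psi$ is not even needed) and exactly three for $\beta>\beta_C$, which is the claim. To prove the monotonicity I would substitute $J=\cosh^2 x$, $x\ge 0$, which turns $\psi$ into $\tanh x - x\,\mathrm{sech}^2 x$, with derivative $2x\,\mathrm{sech}^2 x\,\tanh x > 0$ for $x>0$; the boundary values follow by inspection. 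This monotonicity of $\psi$ — equivalently, that along the ray $(J,Jc)$ in parameter space the saddle-node locus is crossed exactly once — is the only non-routine point; everything else is the standard Curie--Weiss fixed-point count, and the conclusion alternatively follows from the Glauber-dynamics analysis of \citep{Presutti09} invoked above.
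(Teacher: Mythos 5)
Your proof is correct, and it is genuinely more self-contained than what the paper offers: the paper simply observes that after the substitution $u=2p-1$ the stationary equation becomes the fixed-point equation of the mean-field Glauber/Ising model with external field and defers the one-versus-three count to the known analysis in \citet{Presutti09} (``or it can easily be done directly''), without ever carrying out that direct argument. You carry it out. Your reduction to counting zeros of $G(u)=\tanh(J(u+c))-u$ with $J=\beta/4$, $c=1-2\zeta\in(-1,1)$ matches the paper's equation (\ref{eq_ising}) exactly, the three-monotone-branch analysis of $G$ is sound, and the key payoff of your approach is an \emph{explicit} characterization of the threshold, $\beta_C=4J_C$ with $\psi(J_C)=|1-2\zeta|$, where $\psi(J)=\sqrt{1-1/J}-J^{-1}\operatorname{arccosh}\sqrt{J}$; the substitution $J=\cosh^2x$ reducing $\psi$ to $\tanh x-x\,\mathrm{sech}^2x$ with derivative $2x\,\mathrm{sech}^2x\tanh x>0$ is a clean way to settle the one non-routine point (that the saddle-node locus is crossed exactly once). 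Two minor remarks. First, your sentence ``exactly three precisely when $G(u_-)<0<G(u_+)$ and exactly one otherwise'' is not literally right at the tangency cases $G(u_-)=0$ or $G(u_+)=0$, where there are exactly two solutions; but these occur only at $\beta=\beta_C$, which the lemma does not address, so nothing is lost. Second, when $\zeta=\tfrac12$ (so $c=0$) your $J_C$ equals $1$ and the ``three monotone branches'' case begins immediately at $J>1$; your argument covers this since the $J\le1$ branch of the analysis already gives uniqueness there. In short: same underlying fixed-point count as the cited Curie--Weiss theory, but made elementary, quantitative, and independent of the external reference.
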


We note here the different role of $\beta $ in each one of the IDEs (\ref%
{rep-IDE-F}) and (\ref{logit-IDE-F}). \ Since $\beta =a_{11}+a_{22},$ $\beta 
$ measures the size of payoffs in coordination games, capturing the
importance of the game to the players; as $\beta \rightarrow 0,$ the payoffs
become negligible. In replicator IDEs, a change in $\beta $ merely
corresponds to a time change in IDEs, as we can easily see from equations (%
\ref{rep-IDE-F}). As the game become less important, the replicator system
evolves slowly; when the game is for high stakes, the individual's updating
of strategy and, hence, the time evolution of the system is very fast. By
contrast, in logit dynamics $\beta $ becomes a parameter capturing the noise
level. So when $\beta $ is small (high noise), disorder pervades and the
system converges everywhere to $\frac{1}{2};$ everyone randomizes between
two strategies regardless of the payoffs. As $\beta $ gets\ higher (less
noise), logit dynamics approach best response dynamics, and the payoffs
weigh more. In this situation, the solution converges everywhere to a Nash
equilibrium.

Next we examine the linear stability of these stationary solutions. By
differentiating $F_{R},$ $F_{L}$, we find similarly to (\ref{gen-dispersion}%
) the dispersion relations for the replicator IDE: 
\begin{table}[h]
\centering%
\begin{tabular}{ll}
$p=0$ & $\lambda _{R}(k)=F_{\kappa }\left( -\beta \zeta \right) \mathcal{%
\hat{J}}(k)-F_{\kappa }\left( \beta \zeta \right) $ \\ 
$p=1$ & $\lambda _{R}(k)=F_{\kappa }\left( \beta \left( \zeta -1\right)
\right) \mathcal{\hat{J}}(k)-F_{\kappa }\left( \beta \left( 1-\zeta \right)
\right) $ \\ 
$p=\zeta $ & $\lambda _{R}\left( k\right) =\left( \frac{\log (2)}{\kappa }%
+\beta \zeta \left( 1-\zeta \right) \right) \mathcal{\hat{J}}(k)-\frac{\log
\left( 2\right) }{\kappa }$%
\end{tabular}%
\caption{\textbf{Dispersion Relations for the Replicator IDE}}
\label{tab-dispersion-rep}
\end{table}
\newline
Note that by our assumptions of $\mathcal{J}$, $\mathcal{\hat{J}(}k)$ is
real-valued and \TEXTsymbol{\vert}$\mathcal{\hat{J}}(k)|<1$ for all $k.$
Using this fact, we obtain Proposition \ref{prop:linstab_rep}.

\begin{proposition}[Linear Stability for the Replicator IDE]
\label{prop:linstab_rep}$p=0,1$ are linearly stable for the replicator
dynamics for coordination games.
\end{proposition}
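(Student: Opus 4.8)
The plan is to read the eigenvalues straight off the dispersion relations already recorded in Table~\ref{tab-dispersion-rep} and to verify that they are strictly negative for every Fourier mode $k\in\mathbb{Z}^{d}$; since the linearized equation (\ref{var-eq}) decouples across modes, negativity of every $\lambda_{R}(k)$ is exactly linear (indeed asymptotic) stability of the homogeneous states $p=0$ and $p=1$. I will use five elementary facts assembled earlier: the regularization $F_{\kappa}(t)=\frac{1}{\kappa}\log(\exp(\kappa t)+1)$ of (\ref{reg_rep}) is strictly positive for every real $t$; it satisfies the replicator identity $F_{\kappa}(t)-F_{\kappa}(-t)=t$ from (\ref{con_rep}); the normalization of $\mathcal{J}$ gives $\hat{\mathcal{J}}(0)=1$, while the stated properties of $\mathcal{J}$ (nonnegative, symmetric, integrating to one) force $\hat{\mathcal{J}}(k)$ to be real with $|\hat{\mathcal{J}}(k)|<1$ for all $k\neq 0$; and finally $\beta=a(1,1)+a(2,2)>0$ with $\zeta\in(0,1)$.

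For the state $p=0$, Table~\ref{tab-dispersion-rep} gives $\lambda_{R}(k)=F_{\kappa}(-\beta\zeta)\,\hat{\mathcal{J}}(k)-F_{\kappa}(\beta\zeta)$, and I would split the argument according to whether $k=0$. At the zero mode I substitute $\hat{\mathcal{J}}(0)=1$ and invoke the identity, collapsing the expression to $F_{\kappa}(-\beta\zeta)-F_{\kappa}(\beta\zeta)=-\beta\zeta<0$. For $k\neq 0$, since $F_{\kappa}(-\beta\zeta)>0$ and $\hat{\mathcal{J}}(k)<1$, I bound $F_{\kappa}(-\beta\zeta)\hat{\mathcal{J}}(k)<F_{\kappa}(-\beta\zeta)$, whence $\lambda_{R}(k)<F_{\kappa}(-\beta\zeta)-F_{\kappa}(\beta\zeta)=-\beta\zeta<0$. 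Thus $\lambda_{R}(k)<0$ for every $k$.

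The state $p=1$ is handled identically after setting $t:=\beta(1-\zeta)>0$, so that the table entry reads $\lambda_{R}(k)=F_{\kappa}(-t)\hat{\mathcal{J}}(k)-F_{\kappa}(t)$; the same two-case estimate gives $\lambda_{R}(0)=-t<0$ and $\lambda_{R}(k)<-t<0$ for $k\neq 0$. Both claims then follow.

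The only point requiring care — and the one I would flag as the crux — is that the strict bound $|\hat{\mathcal{J}}(k)|<1$ is unavailable at the zero mode, where $\hat{\mathcal{J}}(0)=1$; there the sign of $\lambda_{R}(0)$ is not controlled by the kernel at all but by the algebraic identity $F_{\kappa}(t)-F_{\kappa}(-t)=t$. This is precisely the statement that the pure equilibria $0$ and $1$ are asymptotically stable under the spatially uniform (mean-field) replicator ODE, as one expects for the two strict equilibria of a coordination game; the spatial modes $k\neq 0$ only make the decay faster. Everything else is a one-line monotonicity estimate, so no compactness or fixed-point machinery is needed, and the positivity of $F_{\kappa}$ is what lets the crude bound $\hat{\mathcal{J}}(k)<1$ suffice.
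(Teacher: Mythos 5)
Your proof is correct and follows essentially the same route as the paper, which simply reads the sign of $\lambda_{R}(k)$ off the dispersion relations in Table~\ref{tab-dispersion-rep} using the positivity of $F_{\kappa}$ and the properties of $\hat{\mathcal{J}}$. Your separate treatment of the $k=0$ mode via the identity $F_{\kappa}(t)-F_{\kappa}(-t)=t$ is a worthwhile refinement, since the paper's blanket claim that $|\hat{\mathcal{J}}(k)|<1$ for all $k$ fails at $k=0$ where $\hat{\mathcal{J}}(0)=1$.
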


Figure \ref{fig-dispersion} shows one example of the dispersion relations
for $p=\zeta .$ 
\begin{figure}[tb]
\centering 
\includegraphics[scale=0.6]{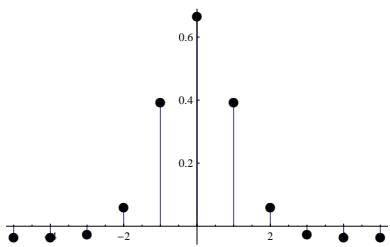}
\caption{\textbf{Dispersion Relation for the mixed strategy equilibrium in
regularized replicator dynamics.} {\protect\footnotesize {The figure shows
the dispersion relation $\protect\lambda _{R}(k)$\ at $p=\protect\zeta $.\ $%
\mathcal{J}(x)=\exp \left( -bx^{2}\right) /\protect\int \exp \left(
-bx^{2}\right) dx$,\ $b=20,\ \protect\kappa =20,~\protect\beta =3,\protect%
\zeta =\frac{1}{3}.$}}}
\label{fig-dispersion}
\end{figure}
Observe that $\lambda \left( k\right) >0$ for $k=0,$ $\pm 1,$ $\pm 2$ and
the solutions to linear equation (\ref{var-eq}) is of the form, $e^{2\pi
ik\cdot x}$ (see appendix)$.$ So, when $k=0,$ the corresponding solution is
constant along space and the eigenvalue $\lambda (0)$ is the eigenvalue for
the linearized equation of the mean-field ODE (\ref{mfode}). Thus $\lambda
(0)>0$ merely shows that $\zeta $ is unstable in mean-field ODE, and when $%
k=0$ we do not expect to observe any non-trivial spatial morphologies. At $%
k= $ $\pm 1,$ the corresponding solution has a period 1, involving $\cos (x)$%
, $\sin (x)$ or both and this solution may grow fast, dominating other
solutions with different frequencies. Note that the nonlinearity of the
replicator IDE implies a bound on the solutions, so that they remain in the
simplex, at each spatial location. An initially fast growing solution may be
bounded due to the nonlinearity effects and, hence, may develop to a
spatially heterogeneous solution. This is how we obtain the pattern
formation in Figure \ref{fig-pattern} (upper panels). For $k=$ $\pm 2,$ we
expect a similar spatial phenomenon, but now the solution involves $\cos
(2x) $ or $\sin (2x).$ Hence, we anticipate a finer pattern and, indeed,
observe this in the numerical simulation of Figure \ref{fig-pattern} (lower
panels).

In the case of logit dynamics, we note that $l^{\prime }(t)=l(t)\left(
1-l(t)\right)$, hence we easily obtain the dispersion relation for any
stationary solution, $p_{0}$:%
\begin{equation}  \label{linearlogit}
\lambda _{L}(k)=\beta (1-p_{0})p_{0}\mathcal{\hat{J}(}k)-1,\,\text{\ }k\in 
\mathbb{Z}^{d}
\end{equation}

\begin{proposition}[Linear Stability for the logit IDEs]
\label{prop-linear-log}Suppose that $0<\mathcal{\hat{J}(}k)$ for all $k.$
When $\beta <\beta _{C},$ the unique stationary solution $p_{0}$ is linearly
stable. When $\beta >\beta _{C}$, two stationary solutions, $p_{1},$ $p_{3},$
are linearly stable where three stationary solutions $p_{1},p_{2}$, and $p_{3%
\text{ }}$ are arranged in $p_{3}<p_{2}<p_{1}.$
\end{proposition}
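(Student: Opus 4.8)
The plan is to read linear stability straight off the logit dispersion relation (\ref{linearlogit}) and then reinterpret the resulting scalar inequality as a statement about the slope of the fixed-point map whose fixed points are the stationary solutions.

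First I would recall what linear stability means here: a spatially homogeneous stationary solution $p_{0}$ is linearly stable exactly when every eigenvalue $\lambda _{L}(k)$ in (\ref{linearlogit}) is negative. Since $0<p_{0}<1$ and $\beta >0$, the prefactor $\beta (1-p_{0})p_{0}$ is strictly positive, so $k\mapsto \lambda _{L}(k)$ is increasing in $\mathcal{\hat{J}}(k)$. Using the normalization $\mathcal{\hat{J}}(0)=\int \mathcal{J}=1$ together with $|\mathcal{\hat{J}}(k)|\leq \int \mathcal{J}=1$ and the hypothesis $0<\mathcal{\hat{J}}(k)$, the maximum of $\lambda _{L}$ over $k$ is attained at $k=0$, where $\lambda _{L}(0)=\beta (1-p_{0})p_{0}-1$. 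Hence all modes are stable if and only if the single scalar inequality $\beta p_{0}(1-p_{0})<1$ holds; the binding mode is the spatially homogeneous one $k=0$.

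The crucial observation is that this scalar is precisely the slope of the fixed-point map at $p_{0}$. Writing $g(p):=l(\beta (p-\zeta ))$, the stationary solutions are the fixed points $g(p)=p$, and using $l^{\prime }=l(1-l)$ together with $g(p_{0})=p_{0}$ gives $g^{\prime }(p_{0})=\beta p_{0}(1-p_{0})$. Thus $p_{0}$ is linearly stable if and only if $g^{\prime }(p_{0})<1$, i.e. the graph of $g$ crosses the diagonal transversally from above to below at $p_{0}$. I would then analyze the sign pattern of $h(p):=g(p)-p$ on $[0,1]$. Since $0<g<1$ we have $h(0)=g(0)>0$ and $h(1)=g(1)-1<0$, so $h$ goes from $+$ to $-$ across $[0,1]$ and its zeros alternate in crossing direction. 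By Lemma \ref{logit-space}, for $\beta <\beta _{C}$ there is one zero $p_{1}$, at which $h$ passes from $+$ to $-$, forcing $g^{\prime }(p_{1})<1$ and stability. For $\beta >\beta _{C}$ there are three zeros $p_{3}<p_{2}<p_{1}$ with sign pattern $+,-,+,-$; the two outer crossings $p_{3},p_{1}$ pass from $+$ to $-$ (so $g^{\prime }<1$, stable), while the middle crossing $p_{2}$ passes from $-$ to $+$ (so $g^{\prime }(p_{2})>1$, unstable).

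The one point needing care, and the step I expect to be the main obstacle, is transversality: the sign argument by itself only yields $g^{\prime }\leq 1$ or $g^{\prime }\geq 1$, whereas stability requires the strict inequalities. This is resolved by the definition of $\beta _{C}$ as the tangency value: a fixed point with $g^{\prime }(p_{0})=1$ is exactly a point where $g$ is tangent to the diagonal, which by Lemma \ref{logit-space} occurs only at $\beta =\beta _{C}$, where two solutions merge. Consequently, for $\beta \neq \beta _{C}$ every fixed point is a transversal crossing with $g^{\prime }(p_{0})\neq 1$, the above sign inequalities become strict, and the stability/instability assignments follow, completing the proof.
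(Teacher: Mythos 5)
Your proposal is correct, and its endgame coincides with the paper's: both reduce linear stability to the single scalar condition $\beta p_{0}(1-p_{0})<1$ via the dispersion relation (\ref{linearlogit}) and the bound $\mathcal{\hat{J}}(k)\leq \mathcal{\hat{J}}(0)=1$, so that the $k=0$ (mean-field) mode is the binding one. The difference is in how that scalar inequality is obtained. The paper's proof simply asserts, as a known fact about the Glauber/logit fixed-point structure, that $\beta\,l(\beta(p_{i}-\zeta))(1-l(\beta(p_{i}-\zeta)))<1$ for $i=1,3$ and $>1$ for $i=2$, and then substitutes $l(\beta(p_{i}-\zeta))=p_{i}$. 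You instead prove this assertion: you identify $\beta p_{0}(1-p_{0})$ as the slope $g'(p_{0})$ of the fixed-point map $g(p)=l(\beta(p-\zeta))$ and run an intermediate-value/sign-crossing argument on $h=g-p$ using $h(0)>0>h(1)$ and the solution count from Lemma \ref{logit-space}. This buys a self-contained justification of the step the paper leaves as a citation-level remark, at the cost of one extra input: your transversality step (ruling out $g'(p_{0})=1$ for $\beta\neq\beta_{C}$) relies on reading $\beta_{C}$ as the tangency value where roots merge, which is consistent with but not literally stated in Lemma \ref{logit-space}. For this particular sigmoid the gap is easily closed --- $p\mapsto\beta\,l(\beta(p-\zeta))(1-l(\beta(p-\zeta)))$ is a single bump, so $h'$ has at most two zeros and, by Rolle, three distinct fixed points are automatically transversal --- so this is a presentational caveat rather than a genuine gap.
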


We note that the Gaussian kernel satisfies the hypothesis, $0<\mathcal{\hat{J%
}(}k)$ for all $k.$ From table \ref{tab-dispersion-rep}, we see that the
dispersion relation for $p=\zeta $ in the replicator IDEs approaches 
\begin{equation*}
\lambda _{R}(k)=\beta \zeta (1-\zeta )\mathcal{\hat{J}(}k)\text{ as }\kappa
\rightarrow \infty
\end{equation*}%
and, as a result, $\lambda _{L}(k)$ is less than $\lambda _{R}(k)$ at $%
\kappa =\infty .$ Thus, we expect that the unstable steady solutions of the
replicator IDE are `more unstable' than the corresponding ones for logit
dynamics, and developed patterns in the replicator case may persist longer;
this conjecture is numerically confirmed in Figure \ref{fig:figure11} below.

\medskip \noindent \textbf{Remark:} Overall, the linearized analysis
depicted in the dispersion relations in Table 1 or in (\ref{linearlogit})
for the deterministic mesoscopic IDE, allows us to easily create a \emph{%
phase diagram} for pattern generation and strategies segregation, i.e. a
systematic representation of the parameter regimes of the microscopic models
for which we expect to have nontrivial spatial structures. On the other hand
such calculations, typically referred to in the engineering literature as
`systems tasks', are prohibitive using conventional Monte Carlo simulations
such as the ones in \cite{Szabo07}, for the complex microscopic processes in
Section \ref{model-section}; this is due not only to the expense of spatial
Monte Carlo simulations with many agents and strategies, but even more
importantly to the large number of parameters involved in the microscopic
models. 

\subsubsection{Equilibrium selection: Mean-Field ODE versus Mesoscopic IDE}

To understand the importance of spatial interactions in our model compared
to existing mean-field models, we investigate the problem of equilibrium
selection for mean-field ODEs and spatial, mesoscopic IDEs. We ran numerical
simulations of the solutions to the spatial IDEs and the corresponding
mean-field ODEs whose initial values are chosen to be the spatial averages
of the initial data to the IDEs.

\begin{figure}[tb]
\centering 
\includegraphics[scale=0.5]{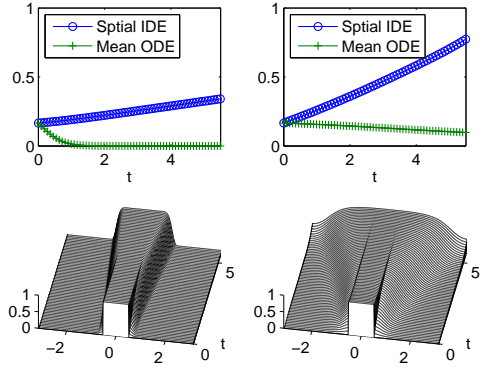}
\caption{\textbf{Comparison of equilibrium selections in mean-field ODEs and
IDEs (Periodic BC).} {\protect\footnotesize {The upper left panel shows
population densities of strategy 1 for the mean replicator ODE and IDE. The
upper right panel depicts the case of logit rule. The bottom panels show the
actual solutions of IDEs which were used for the comparison. $N=512.$\ $%
\Lambda =[-\protect\pi ,\protect\pi ].$\ $dt=0.001/(0.25N^{2}),$\ $a_{11}=%
\frac{20}{3},$\ $a_{22}=\frac{10}{3},~a_{12}=a_{21}=0$. $b=2~$for the
Gaussian kernel. The initial density in the upper panel is $\frac{1}{6}$ and
the initial datum for IDEs is $\mathbf{1}\{-\frac{\protect\pi }{6}<x<\frac{%
\protect\pi }{6}\}$.}}}
\label{fig:figure_add}
\end{figure}

In the upper panels in Figure \ref{fig:figure_add}, we present the
comparisons of population densities of strategy 1 in the coordination game
with payoffs, $a(1,1)=\frac{20}{3},$ $a(2,2)=\frac{10}{3}$, $a(1,2)=a(2,2)=0$
. The values for IDE are computed by integrating the corresponding spatial
solutions over space. The bottom panels show the evolutions of the spatial
solutions which were used to generate the upper panels. We used both the
replicator and logit equations; the left panels correspond to the replicator
equation, and the right panels describe the logit dynamics. As we see from
the upper panels, the solutions to mean-field ODEs converge to the
equilibrium where everyone in the population coordinates to strategy 2,
since the initial density $\frac{1}{6}$ belongs to the basin of the
attraction of this equilibrium. However, a small island of 1-strategists in
the spatial domain induces a transition toward an equilibrium of
coordinating to strategy 1, even though the total population density using
strategy 1 is still $\frac{1}{6}.$ In the replicator IDE case, the system
reaches a \emph{metastable state} $-$ a state where both strategies coexist
for a very long time $-$ and form a 1-dimensional pattern similar to Figure %
\ref{fig-pattern}. In case of the logit IDE, the propagation of strategy 1
is much faster than the replicator IDE, and typically the system converges
to the equilibrium of coordination to strategy 1. Heuristically, this is
because agents located near the island of 1-strategists, but are playing
strategy 2, face a roughly $50\%$ chance of interacting with 1-stategists
and $50\%$ chance of interacting with 2-strategists and, since strategy 1
yields a higher payoff than strategy 2, these agents are better off by
adopting strategy 1. This mechanism propagates strategy 1 to the whole
spatial domain in IDEs.

In similar simulations, though not reported in the paper, we have observed
that mean-field ODEs tend to overestimate the speed of convergence to
equilibrium. The mean-field ODE systems converge to equilibrium
exponentially fast, while in the IDEs the convergence is much slower, which
represents the fact that patterns are metastable; for related metastable
behavior for scalar reaction-diffusion equations we refer for instance to 
\citet{Carr89}%
.Thus, one needs to exercise caution in studying equilibrium selection and
the convergence of the system using mean-field equations, especially when
the spatial consideration of system is important.

\subsubsection{Traveling front solution as a way of equilibrium selection:
Imitation versus Perturbed Best Responses}

Suppose that the domain is a subset of $\mathbb{R}$ with the fixed boundary
conditions or the whole real line $\mathbb{R}$. Then, this provides a
natural setting to study traveling front solutions, see for instance Figure %
\ref{fig:travel-front}. A solution is called a traveling front or wave
solution if it moves at a constant speed: i.e., a traveling front solution $%
p(x,t)$ can be written as $P(x-ct)$ for some constant $c$ and some function $%
P.$ The existence of traveling front solutions for the logit dynamics is the
direct consequences of known results for the Glauber equations. When $\zeta =%
\frac{1}{2},$ the existence of a unique standing wave (i.e. $c=0)$ was
proved and when there are three equilibrium states, the existence of
traveling waves was established \citep{DalPasso91, DeMasi95,
Orlandi97}. Particularly, if $\zeta <\frac{1}{2}$ one can find a solution
that satisfies $P\left( -\infty \right) =0$ and $P\left( \infty \right) =1$,
and travels at a negative speed. Thus the value of $P\left( \infty \right) $
propagates to the whole real line and as $t\rightarrow \infty ,$ the
solution becomes 1 everywhere; coordination to a state with the higher
payoffs becomes a dominating behavior. However, there is no existing
rigorous result, so far, on the replicator IDE, though we have observed this
solution in numerical simulations.

To compare the traveling wave solutions for each mesoscopic dynamics, we
first study the shapes of the standing waves. This is because the shapes of
the standing waves may depend on how \textquotedblleft
diffusive\textquotedblright\ the system is and the diffusiveness of the
system may, in turn, determine the speed of traveling waves. As in the usual
analysis of Allen-Cahn type PDE and Glauber IDE, we believe that the
sharpness of the standing wave varies with the diffusion effect of the
equations and the more \textquotedblleft diffusive\textquotedblright\ the
system is, the faster interfaces move \citep{Carr89, Katsoulakis97}.

\begin{figure}[tb]
\centering 
\includegraphics[scale=0.5]{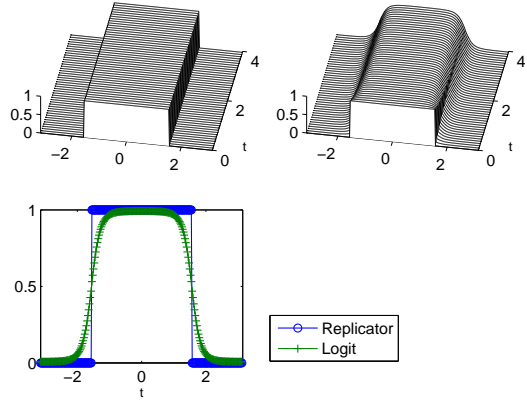}
\caption{\textbf{Comparison of standing waves between the replicator and the
logit dynamic (}$a_{11}=a_{22},$ \textbf{Periodic BC).} 
{\protect\footnotesize {\ The upper left panel shows the time evolution of
the population density of strategy 1 in the replicator dynamic. The upper
right panel describes the case of the logit dynamic. The bottom panel shows
the shapes of standing waves in both cases at time $4$.\ We consider the
replicator with $\protect\kappa =\infty .$\ {} $N=256.$\ $\Lambda =[-\protect%
\pi ,\protect\pi ].$\ $dt=0.001/(0.25N^{2}),$\ $a_{11}=5,$\ $%
a_{22}=5,~a_{12}=a_{21}=0$. \ $b=2.$\ The initial datum is }}$\mathbf{1}$%
{\protect\footnotesize {$_{[-\frac{1}{2}\protect\pi ,\frac{1}{2}\protect\pi %
]}$}}}
\label{fig:figure9}
\end{figure}

As Figure \ref{fig:figure9} shows, the shape of the standing wave in the
replicator dynamics with $\kappa =\infty $ is much sharper than that of the
logit dynamics. In other numerical simulations, we have observed that the
shape of the regularized replicator dynamics depend on $\kappa $; as $\kappa 
$ become larger, the shape is getting sharper. Since $F_{\kappa
}(t)\rightarrow \left[ t\right] _{+}$ as $\kappa \rightarrow \infty $, as $%
\kappa $ increases marginal gains from switching to a different strategy
become higher in response to increases in the payoff of that strategy; in
particular, at $\kappa =\infty $, this marginal gain becomes infinity. Thus
in the replicator IDEs of high payoffs, there is a zero probability for
actions against the optimal choice, hence the interface is very sharp.
However, the players in the logit dynamics do not have zero probabilities
for doing such an action when an agent is right on the \textquotedblleft
interface\textquotedblright ; i.e., there is a nonzero probability to select
something not optimal. That creates the \textquotedblleft
mushy\textquotedblright\ mixed region of a transition, see the schematic in
Figure \ref{fig:traveling} (b). From this observation we infer that the
logit dynamic is more \textquotedblleft diffusive\textquotedblright\ than
the replicator dynamic with $\kappa =\infty $; hence the interfaces in the
logit IDEs would move faster than those in the replicator IDEs. This is
numerically exhibited in Figure \ref{fig:figure10}.

\begin{figure}[tb]
\centering 
\includegraphics[scale=0.4]{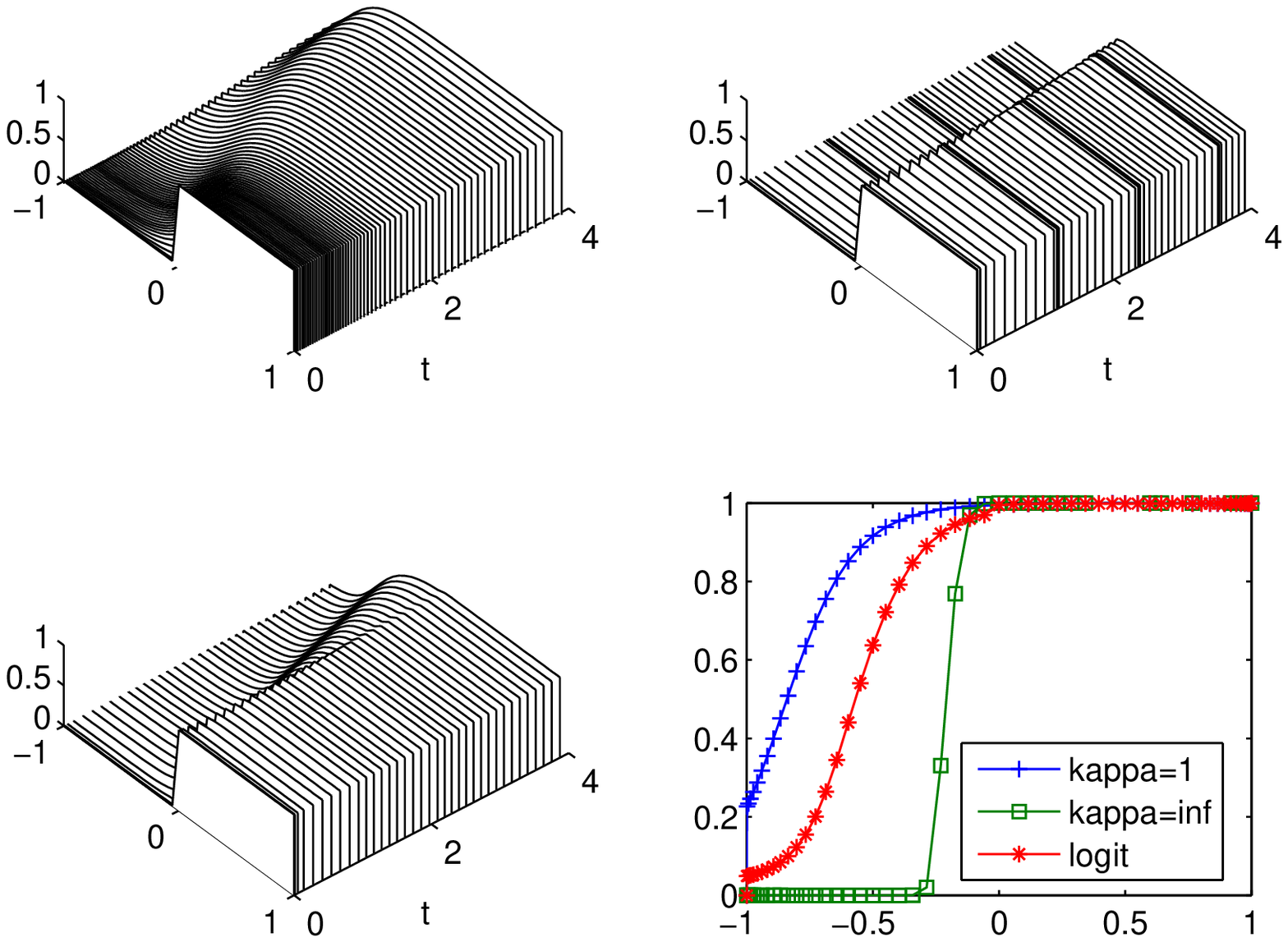}
\caption{\textbf{Comparison of traveling waves between the replicator and
the logit IDEs (Fixed BC).} {\protect\footnotesize {The upper panels show
the time paths of the population densities for strategy 1 in the replicator
with $\protect\kappa =1 $ (left) and the one with $\protect\kappa =\infty $
(right). The lower left panel shows the case of the logit dynamic. In the
bottom right panel we show the shapes of traveling waves at time 4. $N=256.$ 
$\Lambda =[-1,1]$, $\partial \Lambda =[-3,-1]\cup \lbrack 1,3]$ with the
fixed boundary condition $p(x)=0$ for $x\in \lbrack -3,-1]$ and $p(x)=1$ for 
$x\in \lbrack 1,3],~dt=0.001/(0.05N^{2}),$ $a_{11}=20/3,a_{12}=a_{21}=0. $ $%
b=2$ for the Gaussian kernel. The initial datum is $\mathbf{1}_{[0,1]}.$ }}}
\label{fig:figure10}
\end{figure}

We note that in the coordination game used for Figure \ref{fig:figure10},
the equilibrium of coordination to strategy 1 is the one predicted by the
existing equilibrium selection theories 
\citep{Harsanyi88,  Young98,
Hofbauer97, HofbauerWave97}. Particularly \citet{HofbauerWave97} shows,
under the best response dynamics, the existence of a traveling wave solution
which drives out the equilibrium of strategy 2, and at the same time
propagates the equilibrium of strategy 1. Although we observe the existence
of similar traveling wave solutions under various dynamics, the speed of
traveling varies dramatically. As Figure 9 shows the transition is extremely
slow in the replicator equation with $\kappa =\infty .$ So, when the society
is characterized by imitative behaviors and marginal gains from switching is
high, our model predicts that the transition to a \textquotedblleft better
equilibrium\textquotedblright\ is very slow and it takes a long time for
equilibrium selection to occur.

Finally we present another comparison between the imitative behavior with
the perturbed best response rule using unequal payoff coordination games $%
(a_{11}>a_{22})$ with the periodic boundary condition (Figure \ref%
{fig:figure11}). 
\begin{figure}[tb]
\centering 
\includegraphics[scale=0.35]{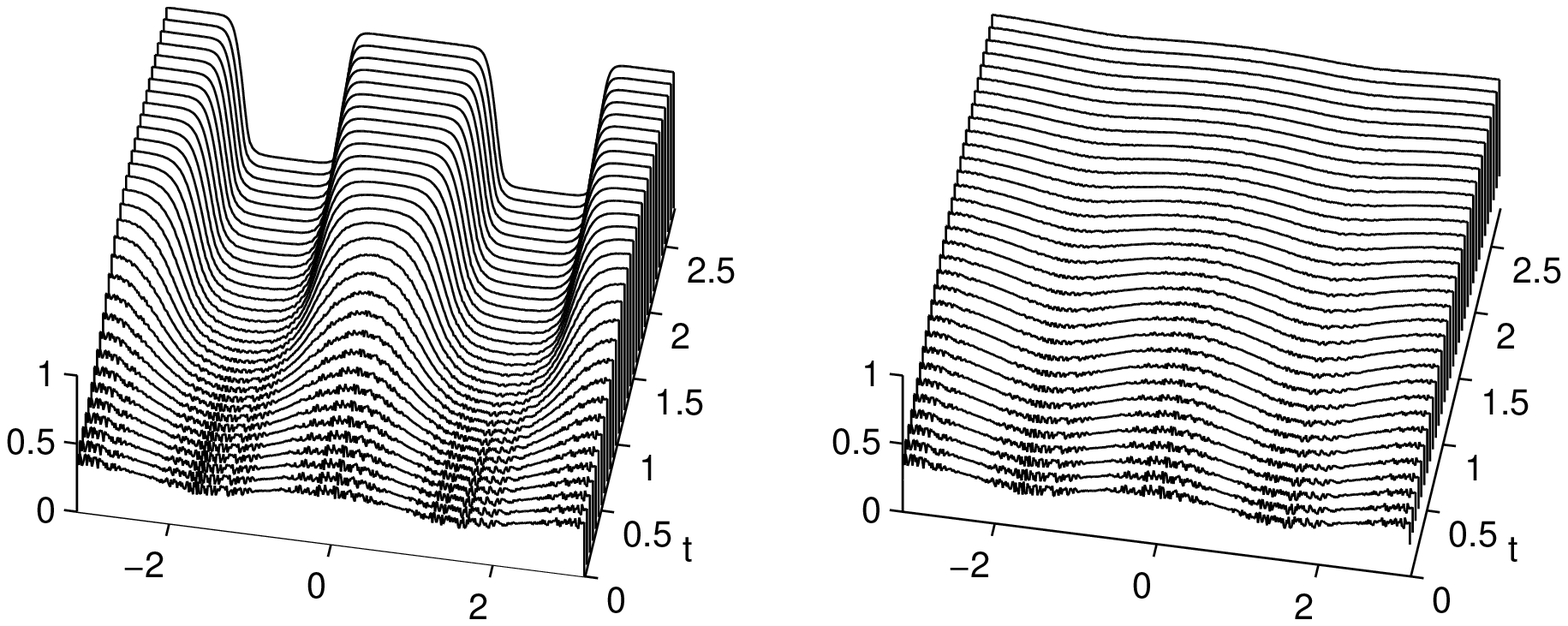}
\caption{\textbf{Replicator versus Logit (Periodic BC).} 
{\protect\footnotesize {The left panel shows population density of strategy
1 for the replicator IDE with $\protect\kappa =\infty $, and the right panel
depicts the population density in the logit dynamics. $\ N=512.$\ $\Lambda
=[-\protect\pi ,\protect\pi ]$\ with the periodic condition$.$\ $%
dt=0.001/(0.05N^{2}),$\ $a_{11}=20/3,$\ $a_{22}=10/3,~a_{12}=a_{21}=0$. \ $%
b=10.$\ for the Gaussian kernel$.~$The initial datum is $\frac{1}{2}+\frac{1%
}{10}\func{rand}\cos (2x),$where rand denotes a realization of the uniform
random variable at each node.}}}
\label{fig:figure11}
\end{figure}
Observe that the time evolution of the replicator dynamic IDE in the left
panel of Figure \ref{fig:figure11} corresponds to the 1-dimensional snap
shot of the pattern formation in two dimensional replicator systems in
Figure \ref{fig-pattern}. In Figure \ref{fig:figure11}, the replicator
system developed a spatial pattern; in the logit dynamic all population
coordinate to an equilibrium of strategy 1 exponentially fast. Thus, in a
society where agents adopt strategies by imitating their neighbors, the
significant proportion of the population may spend a long time in an
inefficient equilibrium, whereas agents with perturbed best response rules
coordinate \textquotedblleft better\textquotedblright\ to an efficient
outcome.

Throughout numerical simulations, we frequently observed the development of
patterns in the replicator IDEs, while this is not the case for logit IDEs,
except for the equal payoff coordination games. We have also observed the
similar pattern formations in the regularized replicator IDEs for a
reasonable range of $\kappa ;$ the regularized replicator IDEs with $\kappa
=10$ showed similar patterns to the replicator IDEs$.$

\subsection{PDE approximations of IDEs}

If the interaction kernel $\mathcal{J}$ is highly concentrated at the
origin, or equivalently, the density $f$ varies slowly with respect to
space, we can consider $\mathcal{J}_{\epsilon }(x)=\epsilon ^{-d}\mathcal{J}%
(x/\epsilon )$\ as an interaction kernel for small $\epsilon .$ Then by a
change of variables and a Taylor expansion, we find%
\begin{equation*}
\mathcal{J}_{\epsilon }\ast f\approx f+\frac{\epsilon ^{2}}{2}J_{2}\Delta f
\end{equation*}%
where we ignore smaller order terms like $\epsilon ^{3}$ and $\Delta
f=(\Delta f_{1},\Delta f_{2},\cdots ,\Delta f_{n})^{T},$ $\Delta f_{1}=\frac{%
\partial ^{2}f_{1}}{\partial r_{1}^{2}}+\cdots +\frac{\partial ^{2}f_{1}}{%
\partial r_{d}^{2}},$ and $J_{2}=\int_{\Lambda }\left\vert w\right\vert ^{2}%
\mathcal{J}(w)dw.$ Thus, by expanding $F(f+\frac{\epsilon ^{2}}{2}%
J_{2}\Delta f,f)$ in equation \ref{IDE1} around $\epsilon $ $\approx 0\,\
\,\,$again, we find the PDE approximations of IDEs:%
\begin{equation}
\frac{\partial f}{\partial t}=F(f,f)+\frac{1}{2}\epsilon ^{2}J_{2}M\Delta f
\label{PDE}
\end{equation}%
where $\left( M\right) _{i,j}:$= $\frac{\partial F_{i}}{\partial r_{j}}$ ,
and the derivatives are evaluated at ($f,f).$ Intuitively, the coordinating
behaviors imply that agents try to choose the same strategy as their
neighbors, and this, in turn, means that the density of a given strategy
tends to diffuse toward locations where the coordination of that strategy is
more likely. This is how our original IDEs are related to the reaction
diffusion equations in (\ref{PDE}). For specific PDE expressions, we find \ 
\begin{equation}
\begin{tabular}{ll}
\textbf{Replicator} & $\frac{\partial f}{\partial t}=\beta f(1-f)(f-\zeta )$
\\ 
& $\ \ \ +\left[ \beta f(1-f)+(1-f)F_{\kappa }\left( \beta \left( f-\zeta
\right) \right) +fF_{\kappa }\left( \beta \left( \zeta -f\right) \right) %
\right] \frac{\epsilon ^{2}}{2}J_{2}\Delta f$ \  \\ 
&  \\ 
\textbf{Logit} & $\frac{\partial f}{\partial t}=l(\beta \left( f-\zeta
\right) )-f+\beta l(\beta (f-\zeta ))(1-l(\beta (f-\zeta )))\frac{\epsilon
^{2}}{2}J_{2}\Delta f$%
\end{tabular}
\label{PDEs}
\end{equation}

Both PDEs in (\ref{PDEs}) are reaction diffusion equations, whose reaction
terms are of the same functional form as the mean field reactions (term $%
\beta f(1-f)(f-\zeta )$ in the replicator and $l(\beta \left( f-\zeta
\right) )-f$ in the logit). The diffusion terms are \emph{nonlinear} as the
coefficients of the terms $\Delta f$ depend on the strategy density $f.$ In
PDE that \citet{Hutson92}, \citet{Vickers93}, \citet{Hofbauer97}, and %
\citet{HofbauerWave97} have studied for the existence of traveling wave
solutions and pattern formation, the diffusion coefficients are constant,
implicitly modeling `fast' diffusion of strategies between players at
different lattice sites in space at the microscopic level, in contrast to
the `slow' strategy updating dynamics; such derivations of
reaction-diffusion PDE from interacting particle systems with combined
fast/slow mechanisms are discussed in \cite{Durrett99} and references
therein. However, in our long-range interaction models the diffusion terms
are concentration-dependent, induced by the nonlinearities in the logit and
replicator microscopic stochastic dynamics, which in turn are heuristically
discussed in Figure \ref{fig:traveling}. In biology models, when the
population pressure tends to enhance dispersal as the population density
increases, the density dependent reaction diffusion models have been used 
\citep{ murray89, Morishita71,
Shigesada80}.

Overall the PDEs in (\ref{PDEs}) provide additional insights for the IDEs in
(\ref{rep-IDE-F}) and (\ref{logit-IDE-F}), and their corresponding
microscopic stochastic dynamics. For example, in the case of (\ref{rep-IDE-F}%
), when $p$ is close to either $0$ or $1$ the diffusion term is weakest and
when $p$ lies in the intermediate range, the effect becomes strong. This
means that the individuals playing strategy 1 diffuse fast, as $p$ reaches $%
\frac{1}{2},$ because it is more likely for them to play with 2-strategists,
so more likely to be uncoordinated. When it is highly likely to be
coordinated, as in $p=0$ or 1, the individuals with the corresponding
strategy do not diffuse at all.

\appendix \setstretch{1} \scalefont{0.8}\newpage

\section{Appendix}

\label{theappendix}

\subsection{Various strategy revision rates and proof of Theorem \protect\ref%
{thm-long-range-fixedBC} \newline
}

\noindent \underline{\textbf{Strategy Revision Rates}} \newline

\noindent We show that condition \textbf{C1}$-$\textbf{C3 }are satisfied for
the following strategy revision rates: \begingroup \scalefont{0.9}

\medskip \noindent $\bullet$ $c^{\gamma }\left( x,\sigma ,k\right) =F\left(
u\left( x,\sigma ,k\right) \right) $ $\ \ \ \ \ \ \ \ \ \ \ \ \ \ \ \ \ \ \
\ \ \ \ \ \ \ \ \ \ \ \ \ \ \ \ \ :$ \ $\mathbf{c(}u,i,k,f)=F\left(
\sum_{l}a\left( i,l\right) \mathcal{J}\ast f\left( u,l\right) \right) $

\medskip \noindent $\bullet$ $c^{\gamma }\left( x,\sigma ,k\right) =F\left(
u\left( x,\sigma ,k\right) -u\left( x,\sigma ,\sigma \left( x\right) \right)
\right) $ $\ \ \ \ \ \ \ \ \ \ \ \ \ \ \ \ \ :$ $\mathbf{c(}u,i,k,f)=F\left(
\sum_{l}[a\left( k,l\right) -a\left( i,l\right) ]\mathcal{J}\ast f\left(
u,l\right) \right) $

\medskip \noindent $\bullet$ $c^{\gamma }\left( x,\sigma ,k\right) =$ $%
\sum_{y}w(x,y,\sigma ,k)F(u(x,\sigma ,k))$ $\ \ \ \ \ \ \ \ \ \ \ \ \ \ \ \
\ \ :$ $\mathbf{c(}u,i,k,f)=\mathcal{J}\ast f\left( u,k\right) F\left(
\sum_{l}a\left( k,l\right) \mathcal{J}\ast f\left( u,l\right) \right) $

\medskip \noindent $\bullet$ $c^{\gamma }\left( x,\sigma ,k\right) =$ $%
\sum_{y}w(x,y,\sigma ,k)F(u\left( x,\sigma ,k\right) -u\left( x,\sigma
,\sigma \left( x\right) \right) )$ $:$ $\mathbf{c(}u,i,k,f)=\mathcal{J}\ast
f\left( u,k\right) F\left( \sum_{l}[a\left( k,l\right) -a\left( i,l\right) ]%
\mathcal{J}\ast f\left( u,l\right) \right) $

\medskip \noindent $\bullet$ $c^{\gamma }\left( x,\sigma ,k\right) =\frac{%
\exp (u\left( x,\sigma ,k\right) )}{\sum_{l}\exp (u\left( x,\sigma ,l\right)
)}$ $\ \ \ \ \ \ \ \ \ \ \ \ \ \ \ \ \ \ \ \ \ \ \ \ \ \ \ \ \ \ \ \ \ \ \ \
:$ $\mathbf{c(}u,i,k,f)=\frac{\exp (\mathcal{J}\ast f\left( u,k\right) )}{%
\sum_{l}\exp (\mathcal{J}\ast f\left( u,l\right) )}$

if $F$ satisfies the global Lipschitz condition:i.e., for all $x,y\in
Dom\left( F\right) ,$ there exists $L>0$ such that $\left\vert
F(x)-F(y)\right\vert \leq L\left\vert x-y\right\vert .$ Note that the list
above is far from being exhaustive; one can easily invent various other
rates which satisfy \textbf{C1}$\mathbf{-}$\textbf{C3}. Since the
verifications of the conditions are similar, we will check the conditions
for the following rate (\ref{check-lem}) in the periodic domain.%
\begin{equation}
c^{\gamma }(x,\sigma ,k)=F(\sum_{y\in \Lambda ^{\gamma }}\mathcal{\gamma }%
^{d}a(k,\sigma (y))\mathcal{J}(\gamma (y-x))-\sum_{y\in \Lambda ^{\gamma }}%
\mathcal{\gamma }^{d}a(\sigma (x),\sigma (y))\mathcal{J}(\gamma (x-y)))
\label{check-lem}
\end{equation}

\begin{lemma}
The rate given by (\ref{check-lem}) satisfies $\mathbf{C1}-\mathbf{C3}.$
\end{lemma}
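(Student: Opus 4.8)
The plan is to exhibit the candidate limiting rate and then verify \textbf{C1}--\textbf{C3} one at a time, each reduction resting on the global Lipschitz bound for $F$ together with the nonnegativity, boundedness and normalization of the kernel $\mathcal{J}$ and the boundedness of the payoff matrix $a$. Writing $i=\sigma(x)$ and $u=\gamma x$, the natural candidate is the rate already recorded in (\ref{conti-rate1}); for a general probability measure $\pi \in \mathcal{P}(\mathbf{T}^{d}\times S)$ I extend it by
\begin{equation*}
\mathbf{c}(u,i,k,\pi)=F\left(\sum_{l\in S}a(k,l)\int_{\mathbf{T}^{d}}\mathcal{J}(u-v)\,\pi(dv,l)-\sum_{l\in S}a(i,l)\int_{\mathbf{T}^{d}}\mathcal{J}(u-v)\,\pi(dv,l)\right),
\end{equation*}
which for $\pi=f\,dudi$ collapses to $F\left(\sum_{l}(a(k,l)-a(i,l))\,\mathcal{J}\ast f(u,l)\right)$. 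Throughout I abbreviate $\left\Vert\mathcal{J}\right\Vert_{\infty}$ for the sup-norm of the (bounded, compactly supported) kernel and $\left\Vert a\right\Vert_{\infty}=\max_{i,j}\left\vert a(i,j)\right\vert$.

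For \textbf{C1} the key is the exact algebraic identity obtained by evaluating $\mathbf{c}$ at the empirical measure of the \emph{same} configuration: since $\pi^{\gamma}(\sigma)$ places mass $\left\vert\mathbf{T}^{d,\gamma}\right\vert^{-1}$ at each $(\gamma y,\sigma(y))$,
\begin{equation*}
\sum_{l\in S}a(k,l)\int_{\mathbf{T}^{d}}\mathcal{J}(\gamma x-v)\,\pi^{\gamma}(\sigma;dv,l)=\frac{1}{\left\vert\mathbf{T}^{d,\gamma}\right\vert}\sum_{y}\mathcal{J}(\gamma x-\gamma y)a(k,\sigma(y)).
\end{equation*}
Hence the argument of $F$ inside $c^{\gamma}(x,\sigma,k)$ is exactly $\gamma^{d}\left\vert\mathbf{T}^{d,\gamma}\right\vert$ times the argument of $F$ inside $\mathbf{c}(\gamma x,\sigma(x),k,\pi^{\gamma}(\sigma))$. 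Because $\mathcal{J}\geq0$ and $\pi^{\gamma}$ is a probability measure, the latter argument $A$ satisfies $\left\vert A\right\vert\leq 2\left\Vert a\right\Vert_{\infty}\int_{\mathbf{T}^{d}}\mathcal{J}(\gamma x-v)\,\bar\pi^{\gamma}(dv)\leq 2\left\Vert a\right\Vert_{\infty}\left\Vert\mathcal{J}\right\Vert_{\infty}$, uniformly in $\sigma$, so the Lipschitz property of $F$ yields
\begin{equation*}
\left\vert c^{\gamma}(x,\sigma,k)-\mathbf{c}(\gamma x,\sigma(x),k,\pi^{\gamma}(\sigma))\right\vert\leq 2L\left\Vert a\right\Vert_{\infty}\left\Vert\mathcal{J}\right\Vert_{\infty}\left\vert\gamma^{d}\left\vert\mathbf{T}^{d,\gamma}\right\vert-1\right\vert,
\end{equation*}
uniformly in $x$, $\sigma$, $k$. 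As $\gamma^{d}\left\vert\mathbf{T}^{d,\gamma}\right\vert\to1$ (recall $\left\vert\mathbf{T}^{d,\gamma}\right\vert\approx\gamma^{-d}$), the right-hand side vanishes, which is \textbf{C1}.

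Conditions \textbf{C2} and \textbf{C3} follow from the same estimates applied to $\mathbf{c}(u,i,k,f\,dudi)$ directly. For \textbf{C2}, since $0\leq\mathcal{J}\ast f(u,l)\leq1$ and $\sum_{l}\mathcal{J}\ast f(u,l)=1$ (using $0\le f\le1$ and $\int\mathcal{J}=1$), the argument of $F$ is bounded by $2\left\Vert a\right\Vert_{\infty}$, and $\left\vert F(s)\right\vert\leq\left\vert F(0)\right\vert+L\left\vert s\right\vert$ gives a uniform bound $M$. For \textbf{C3}, the Lipschitz property of $F$ reduces matters to the convolutions, and the estimate $\left\vert\mathcal{J}\ast(f_{1}-f_{2})(u,l)\right\vert\leq\left\Vert\mathcal{J}\right\Vert_{\infty}\int_{\mathbf{T}^{d}}\left\vert f_{1}(v,l)-f_{2}(v,l)\right\vert dv$ gives
\begin{equation*}
\left\vert\mathbf{c}(u,i,k,f_{1}\,dudi)-\mathbf{c}(u,i,k,f_{2}\,dudi)\right\vert\leq 2L\left\Vert a\right\Vert_{\infty}\left\Vert\mathcal{J}\right\Vert_{\infty}\left\Vert f_{1}-f_{2}\right\Vert_{L^{1}(\mathbf{T}^{d}\times S)},
\end{equation*}
uniformly in $u$, $i$, $k$.

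The one point needing genuine care is the uniformity in \textbf{C1}: the supremum runs over \emph{all} configurations $\sigma$, and one must avoid appealing to any actual convergence of $\pi^{\gamma}(\sigma)$, which fails for generic $\sigma$. The algebraic identity circumvents this, since it shows the two rates differ only through the deterministic, configuration-independent factor $\gamma^{d}\left\vert\mathbf{T}^{d,\gamma}\right\vert-1$. The remaining work—bookkeeping of the constants $\left\Vert a\right\Vert_{\infty}$ and $\left\Vert\mathcal{J}\right\Vert_{\infty}$, and checking the other rates listed at the start of the appendix—is routine and follows the same template, the logit rate additionally using that its denominator is bounded below by $\left\vert S\right\vert\exp(-2\left\Vert a\right\Vert_{\infty}\left\Vert\mathcal{J}\right\Vert_{\infty})>0$ to obtain the needed local Lipschitz control of $\exp$.
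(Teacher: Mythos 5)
Your proposal is correct and follows essentially the same route as the paper: identify the limiting rate via the empirical measure, observe that the argument of $F$ in $c^{\gamma}(x,\sigma,k)$ and in $\mathbf{c}(\gamma x,\sigma(x),k,\pi^{\gamma}(\sigma))$ differ only through the configuration-independent normalization factor $\gamma^{d}\left\vert \mathbf{T}^{d,\gamma}\right\vert$ versus $1$ (the paper's bound $\left\vert \gamma^{d}\left\vert \Lambda^{\gamma}\right\vert -\left\vert \Lambda\right\vert \right\vert M\rightarrow 0$ is exactly your $\left\vert \gamma^{d}\left\vert \mathbf{T}^{d,\gamma}\right\vert -1\right\vert$ estimate), and then transfer everything through the global Lipschitz bound on $F$; \textbf{C2} and \textbf{C3} follow from boundedness of the argument and Lipschitz continuity just as in the paper. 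Your version is if anything slightly more explicit about the constants and about why the uniformity over $\sigma$ is unproblematic.
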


\begin{proof}
Let%
\begin{eqnarray*}
\bar{c}^{\gamma }(u,i,k,\sigma ) &:&=F(\sum_{y\in \Lambda ^{\gamma }}\gamma
^{d}a(k,\sigma (y))\mathcal{J}(u-\gamma y)-\sum_{y\in \Lambda ^{\gamma
}}\gamma ^{d}a(i,\sigma (y))\mathcal{J}(u-\gamma y)) \\
\mathbf{c}(u,i,k,\pi ) &:&=F(\left\vert \Gamma \right\vert \int^{\gamma
\times S}a(k,l)\mathcal{J}(u-v)\pi (dvdl)-\left\vert \Gamma \right\vert
\int^{\gamma \times S}a(i,l)\mathcal{J}(u-v)\pi (dvdl))
\end{eqnarray*}%
where we associate $u$ to $\gamma x$ and $v$ to $\gamma y.$ First we note
that 
\begin{eqnarray}
&&\left\vert \sum_{y\in \Lambda ^{\gamma }}\gamma ^{d}a(k,\sigma \left(
y\right) )\mathcal{J}(\gamma x-\gamma y)-\left\vert \Gamma \right\vert
\int_{\Lambda \times S}a(k,l)\mathcal{J}(rx-v)\pi ^{\gamma }(\sigma
,dvdl)\right\vert \leq  \label{lem1-1} \\
&&\left\vert \gamma ^{d}-\frac{\left\vert \Lambda \right\vert }{\left\vert
\Lambda ^{\gamma }\right\vert }\right\vert \sum_{y\in \Lambda ^{\gamma
}}a(k,\sigma (y))\mathcal{J}(\gamma x-\gamma y)\leq \left\vert \gamma
^{d}\left\vert \Lambda ^{\gamma }\right\vert -\left\vert \Lambda \right\vert
\right\vert M\rightarrow 0\text{ uniformly in }x,\sigma ,k  \notag
\end{eqnarray}%
where $M:=\sup\limits_{i,k,u,v}a(i,j)\mathcal{J}(u,v).$ So by using the
Lipschitz condition of $F,$ we have%
\begin{eqnarray*}
&&\left\vert c^{\gamma }(x,\sigma ,k)-c(\gamma x,\sigma (x),k,\pi ^{\gamma
}\left( \sigma \right) )\right\vert \leq \left\vert \bar{c}^{\gamma }(\gamma
x,\sigma (x),k,\sigma )-\mathbf{c}(\gamma x,v,\sigma (x),k,\pi ^{\gamma
}(\sigma ))\right\vert \\
&\leq &L\sup_{\substack{ x\in \Lambda ^{\gamma }  \\ \sigma \in S^{\Lambda
^{\gamma }}  \\ k\in S}}\left\vert \sum_{y\in \Gamma ^{\gamma }}\gamma
^{d}a(k,\sigma (y))\mathcal{J}(\gamma x-\gamma y)-\left\vert \Gamma
\right\vert \int_{\Gamma \times S}a(k,l)\mathcal{J}(\gamma x-v)\pi (\sigma
,dvdl)\right\vert + \\
&&L\sup_{\substack{ x\in \Lambda ^{\gamma }  \\ \sigma \in S^{\Lambda
^{\gamma }}  \\ k\in S}}\left\vert \sum_{y\in \Gamma ^{\gamma }}\gamma
^{d}a(\sigma (x),\sigma (y))\mathcal{J}(\gamma x-\gamma y)-\left\vert \Gamma
\right\vert \int_{\Gamma \times S}a(\sigma (x),l)\mathcal{J}(\gamma x-v)\pi
(\sigma ,dvdl)\right\vert \rightarrow 0\text{ uniformly in }x,\sigma ,k
\end{eqnarray*}%
Hence $\mathbf{C1}$ is satisfied. Since $\mathbf{c}(u,i,k,\pi )$ is
uniformly bounded, $\mathbf{C2}$ is satisfied. Again $\mathbf{C3}$ follows
from the fact that $\mathbf{c}\left( u,i,k,\pi \right) $ is uniformly
bounded and $F$ satisfies the Lipschitz condition.
\end{proof}

\bigskip

\noindent \underline{\textbf{Notations }}\newline

\noindent We use the following notations in the proof of Theorem \ref%
{them-long-range-perBC} and Theorem \ref{thm-long-range-fixedBC}.

\medskip \noindent $\bullet$ $\left\{ \Sigma _{t}^{\gamma }\right\} $ is the
stochastic process taking values $\sigma _{t}$ with generator $L^{\gamma }$
given in equation (\ref{eq-generator}) and the sample space $D\left(
[0,T],S^{\Gamma ^{\gamma }}\right) .$\newline

\medskip \noindent $\bullet$ $\left\{ \Pi _{t}^{\gamma }\right\} $ is the
stochastic process for the empirical measure taking values $\pi _{t}$ with
the sample space $D([0,T],\mathcal{P}(\Lambda \times S))$ and we denote by $%
\mathbf{Q}^{\gamma }$ the law of the process $\left\{ \Pi _{t}^{\gamma
}\right\} $ and by $\mathbf{P}$ the probability measure in the underlying
probability space$.$\newline

\noindent The proof of Theorems \ref{them-long-range-perBC} and \ref%
{thm-long-range-fixedBC} are so similar that we only prove Theorem \ref%
{thm-long-range-fixedBC} and leave the modifications needed to prove Theorem %
\ref{them-long-range-perBC} to the readers.

\bigskip

\noindent \underline{\textbf{Martingale Estimates}} \newline

\noindent For $g\in C\left( \Gamma \times S\right) $ we set%
\begin{equation}
h\left( \sigma \right) :=\left\langle \pi ^{\gamma },g\right\rangle =\frac{1%
}{\left\vert \Gamma ^{\gamma }\right\vert }\sum_{y\in \Gamma ^{\gamma
}}g(\gamma y,\sigma (y))  \label{eq:appen_hfun}
\end{equation}%
We define $M_{t}^{g,\gamma },\left\langle M_{t}^{g,\gamma }\right\rangle $
as follows: for $g\in C(\Gamma \times S)$%
\begin{equation}
M_{t}^{g,\gamma }=\left\langle \Pi _{t}^{\gamma },g\right\rangle
-\left\langle \Pi _{0}^{\gamma },g\right\rangle -\int_{0}^{t}L^{\gamma
}\left\langle \Pi _{s}^{\gamma },g\right\rangle ds,\left\langle
M_{t}^{g,\gamma }\right\rangle =\int_{0}^{t}\left[ L^{\gamma }\left\langle
\Pi _{s}^{\gamma },g\right\rangle ^{2}-2\left\langle \Pi _{s}^{\gamma
},g\right\rangle L^{\gamma }\left\langle \Pi _{s}^{\gamma },g\right\rangle %
\right] ds  \label{appen-MG}
\end{equation}%
Since $h$ is measurable, so $M_{t}^{g,\gamma }$ and $\left\langle
M_{t}^{g,\gamma }\right\rangle $ are $\mathcal{F}_{t}-$martingale with
respect to $\mathbf{P},$ where $\mathcal{F}_{t}$ is the filtration generated
by $\left\{ \Sigma _{t}\right\} $ \citep{Ethier86, Darling08}$.$

\begin{lemma}
\label{proof-lem1}For $g\in C(\Gamma \times S)$ there exist $C$ such that%
\begin{equation*}
\left\vert L^{\gamma }\left\langle \pi ^{\gamma },g\right\rangle \right\vert
\leq C,\text{ }\left\vert L^{\gamma }\left\langle \pi ^{\gamma
},g\right\rangle ^{2}-2\left\langle \pi ^{\gamma },g\right\rangle L^{\gamma
}\left\langle \pi ^{\gamma },g\right\rangle \right\vert \leq \gamma ^{d}C
\end{equation*}
\end{lemma}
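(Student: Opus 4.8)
The plan is to compute the action of $L^\gamma$ on the linear functional $h(\sigma)=\langle\pi^\gamma,g\rangle$ explicitly and then exploit that a single spin flip changes $h$ by an amount of order $\gamma^d$. First I would note that $\sigma^{x,k}$ differs from $\sigma$ only at the site $x$, so in the sum $\langle\pi^\gamma,g\rangle=\frac{1}{|\Gamma^\gamma|}\sum_{y\in\Gamma^\gamma}g(\gamma y,\sigma(y))$ only the term $y=x$ is altered (note $x\in\Lambda^\gamma\subseteq\Gamma^\gamma$), giving
\begin{equation*}
\langle\pi^\gamma(\sigma^{x,k}),g\rangle-\langle\pi^\gamma(\sigma),g\rangle=\frac{1}{|\Gamma^\gamma|}\left(g(\gamma x,k)-g(\gamma x,\sigma(x))\right).
\end{equation*}
Substituting this into the definition (\ref{eq-generator}) of $L^\gamma$ yields the explicit formula
\begin{equation*}
L^\gamma\langle\pi^\gamma,g\rangle(\sigma)=\frac{1}{|\Gamma^\gamma|}\sum_{x\in\Lambda^\gamma}\sum_{k\in S}c^\gamma(x,\sigma,k)\left(g(\gamma x,k)-g(\gamma x,\sigma(x))\right).
\end{equation*}

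For the first bound I would use that the rates are uniformly bounded: by \textbf{C1} and \textbf{C2}, $|c^\gamma|\leq|c^\gamma-\mathbf{c}|+|\mathbf{c}|\leq M'$ for some constant $M'$ (e.g. $M'=M+1$) once $\gamma$ is small. Combined with $|g(\gamma x,k)-g(\gamma x,\sigma(x))|\leq 2\Vert g\Vert_\infty$ (finite since $g\in C(\Gamma\times S)$ with $\Gamma\times S$ compact), the $|\Lambda^\gamma|\cdot|S|$ terms of the sum and the prefactor $1/|\Gamma^\gamma|$ give $|L^\gamma\langle\pi^\gamma,g\rangle|\leq 2M'|S|\Vert g\Vert_\infty\cdot(|\Lambda^\gamma|/|\Gamma^\gamma|)$, and since $|\Lambda^\gamma|\leq|\Gamma^\gamma|$ this is at most $C:=2M'|S|\Vert g\Vert_\infty$.

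For the second bound I would invoke the carré du champ identity for a Markov jump generator: if $Lf(\sigma)=\sum_{\sigma'}q(\sigma,\sigma')(f(\sigma')-f(\sigma))$ then a one-line algebraic check gives $L(h^2)-2hL(h)=\sum_{\sigma'}q(\sigma,\sigma')(h(\sigma')-h(\sigma))^2$. Applied here,
\begin{equation*}
L^\gamma\langle\pi^\gamma,g\rangle^2-2\langle\pi^\gamma,g\rangle L^\gamma\langle\pi^\gamma,g\rangle=\sum_{x\in\Lambda^\gamma}\sum_{k\in S}c^\gamma(x,\sigma,k)\left(\langle\pi^\gamma(\sigma^{x,k}),g\rangle-\langle\pi^\gamma(\sigma),g\rangle\right)^2.
\end{equation*}
The decisive point is that each increment is $O(1/|\Gamma^\gamma|)$, hence each squared increment is bounded by $4\Vert g\Vert_\infty^2/|\Gamma^\gamma|^2$. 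Summing over the $|\Lambda^\gamma|\cdot|S|$ jumps and using $|c^\gamma|\leq M'$ and $|\Lambda^\gamma|/|\Gamma^\gamma|\leq 1$ bounds the right-hand side by $4M'|S|\Vert g\Vert_\infty^2/|\Gamma^\gamma|$. Since $|\Gamma^\gamma|\approx\gamma^{-d}|\Gamma|$, this is at most $\gamma^d C$ for a suitable constant.

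There is no genuine obstacle here: the content of the lemma is precisely that $h=\langle\pi^\gamma,g\rangle$ is a slowly varying observable whose single-flip increments scale like $\gamma^d$, so the drift is $O(1)$ while the quadratic variation is $O(\gamma^d)$. The only care needed is the bookkeeping of the factors $|\Gamma^\gamma|$, and—when deducing the periodic-boundary version—replacing $\Lambda^\gamma,\Gamma^\gamma$ by $\mathbf{T}^{d,\gamma}$ throughout, in which case $|\Lambda^\gamma|\leq|\Gamma^\gamma|$ becomes an equality. This smallness of the bracket $\langle M_t^{g,\gamma}\rangle$ in (\ref{appen-MG}) is exactly what forces the martingale $M_t^{g,\gamma}$ to vanish as $\gamma\to 0$, driving the law-of-large-numbers limit (\ref{proof-rem2}).
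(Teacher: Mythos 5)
Your proposal is correct and follows essentially the same route as the paper: compute the single-flip increment of $h=\langle\pi^\gamma,g\rangle$, observe it is $O(1/|\Gamma^\gamma|)$, and deduce the two bounds from the explicit formulas for $L^\gamma h$ and for $L^\gamma h^2-2hL^\gamma h$ (which the paper obtains by direct expansion of $q(\sigma^{x,k})-q(\sigma)$ rather than by quoting the carr\'e du champ identity, but the algebra is identical). The constant bookkeeping via \textbf{C1}--\textbf{C2} and $|\Lambda^\gamma|,|\Gamma^\gamma|\approx\gamma^{-d}\cdot\mathrm{const}$ matches the paper's argument.
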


\begin{proof}
For $h$ in (\ref{eq:appen_hfun})$,$we have%
\begin{equation*}
h(\sigma ^{x,k})-h(\sigma )=\frac{1}{\left\vert \Gamma ^{\gamma }\right\vert 
}\left( g(\gamma x,k)-g(\gamma x,\sigma (x)\right)
\end{equation*}%
and so we have equation (\ref{proof-lem1-1}) below. Now let $q(\sigma
):=\left\langle \pi ^{\gamma },g\right\rangle ^{2}.$ Then%
\begin{eqnarray*}
q(\sigma ^{x,k})-q(\sigma ) &=&\frac{1}{\left\vert \Gamma ^{\gamma
}\right\vert ^{2}}(\sum_{y\in \Lambda ^{\gamma }}g(\gamma y,\sigma
^{x,k}(y)))^{2}-\frac{1}{\left\vert \Gamma ^{\gamma }\right\vert ^{2}}%
(\sum_{y\in \Lambda ^{\gamma }}g(\gamma y,\sigma (y)))^{2} \\
&=&\frac{1}{\left\vert \Gamma ^{\gamma }\right\vert ^{2}}\left( g(\gamma
x,k)-g(\gamma x,\sigma (x))\right) ^{2}+\frac{2}{\left\vert \Gamma ^{\gamma
}\right\vert ^{2}}\left( g(\gamma x,k)-g(\gamma x,\sigma (x))\right)
\sum_{y\in \Lambda ^{\gamma }}g(\gamma y,\sigma (y))
\end{eqnarray*}%
Thus we have%
\begin{equation}
L^{\gamma }\left\langle \pi ^{\gamma },g\right\rangle =\frac{1}{\left\vert
\Gamma ^{\gamma }\right\vert }\sum_{k\in S}\sum_{x\in \Lambda ^{\gamma
}}c^{\gamma }(x,\sigma (x),k)\left( g(\gamma x,k)-g(\gamma x,\sigma
(x)\right)  \label{proof-lem1-1}
\end{equation}%
\begin{equation}
L^{\gamma }\left\langle \pi ^{\gamma },g\right\rangle ^{2}-2\left\langle \pi
^{\gamma },g\right\rangle L^{\gamma }\left\langle \pi ^{\gamma
},g\right\rangle =\frac{1}{\left\vert \Gamma ^{\gamma }\right\vert ^{2}}%
\sum_{k\in S}\sum_{x\in \Lambda ^{\gamma }}c^{\gamma }(x,\sigma (x),k)\left(
g(\gamma x,k)-g(\gamma x,\sigma (x)\right) ^{2}  \label{proof-lem1-2}
\end{equation}%
Therefore from $\mathbf{C1}-\mathbf{C2},$ $\left\vert \Gamma ^{\gamma
}\right\vert \approx \left\vert \Gamma \right\vert \gamma ^{-d},$and $%
\left\vert \Lambda ^{\gamma }\right\vert \approx \left\vert \Lambda
\right\vert \gamma ^{-d},$ the results follow.
\end{proof}

\begin{proposition}
\label{proof-lem-e2}Let $g\in C\left( \Gamma \times S\right) $ and $\tau
^{\gamma }$ and $\delta ^{\gamma }$ such that \newline
(1) $\tau ^{\gamma }$ is a stopping time on the process $\left\{ \Pi
_{t}^{\gamma }:0\leq t\leq T\right\} $ with respect to the filtration $%
\mathcal{F}_{t}$.\newline
(2) $\delta ^{\gamma }$ is a constant, $0\leq \delta ^{\gamma }\leq T$ and $%
\delta ^{\gamma }\rightarrow 0$ as $\gamma \rightarrow 0.$\newline
Then for $\epsilon >0,$ there exists $C$ such that%
\begin{equation*}
\text{(i) \ }\mathbf{P}\left\{ \omega :\sup_{t\in \lbrack 0,T]}\left\vert
M_{t}^{g,\gamma }\right\vert \geq \epsilon \right\} \leq \frac{\gamma ^{d}CT%
}{\epsilon ^{2}}\text{ \ and (ii) \ }\mathbf{P}\left\{ \omega :\left\vert
M_{\tau ^{\gamma }+\delta ^{\gamma }}^{g,\gamma }-M_{\tau ^{\gamma
}}^{g,\gamma }\right\vert \geq \epsilon \right\} \leq \frac{\gamma
^{d}C\delta ^{\gamma }}{\epsilon ^{2}}
\end{equation*}%
and there exists $\gamma _{0}$ such that for $\gamma <\gamma _{0}$%
\begin{equation*}
\text{(iii)~\ }\mathbf{P}\left\{ \omega :\left\vert \int_{\tau ^{\gamma
}}^{\tau ^{\gamma }+\delta ^{\gamma }}L^{\gamma }\left\langle \Pi
_{s}^{\gamma },g\right\rangle ds\right\vert \geq \epsilon \right\} =0
\end{equation*}
\end{proposition}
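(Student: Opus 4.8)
The plan is to prove the three bounds separately, in each case reducing to the pathwise estimates of Lemma \ref{proof-lem1} through the martingale structure recorded in (\ref{appen-MG}). Throughout I write $\mathbb{E}$ for expectation under $\mathbf{P}$ and use that $M_t^{g,\gamma}$ is an $\mathcal{F}_t$-martingale with $M_0^{g,\gamma}=0$ whose predictable quadratic variation is $\langle M_t^{g,\gamma}\rangle$; by construction $(M_t^{g,\gamma})^2-\langle M_t^{g,\gamma}\rangle$ is again a martingale, so $\mathbb{E}[(M_t^{g,\gamma})^2]=\mathbb{E}[\langle M_t^{g,\gamma}\rangle]$. For (i), since $t\mapsto (M_t^{g,\gamma})^2$ is a nonnegative submartingale, Doob's maximal inequality gives
\begin{equation*}
\mathbf{P}\Big\{\sup_{t\in[0,T]}|M_t^{g,\gamma}|\ge\epsilon\Big\}\le\frac{1}{\epsilon^2}\,\mathbb{E}\big[(M_T^{g,\gamma})^2\big]=\frac{1}{\epsilon^2}\,\mathbb{E}\big[\langle M_T^{g,\gamma}\rangle\big].
\end{equation*}
By the second bound of Lemma \ref{proof-lem1} the integrand defining $\langle M_T^{g,\gamma}\rangle$ in (\ref{appen-MG}) is dominated by $\gamma^d C$ pathwise, whence $\langle M_T^{g,\gamma}\rangle\le\gamma^d C T$ surely; substituting yields (i).

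For (ii) I would pass to the increment of the martingale after the stopping time. Applying optional stopping at the bounded stopping times $\tau^\gamma$ and $\tau^\gamma+\delta^\gamma$ to the martingale $(M_t^{g,\gamma})^2-\langle M_t^{g,\gamma}\rangle$ gives
\begin{equation*}
\mathbb{E}\big[(M_{\tau^\gamma+\delta^\gamma}^{g,\gamma}-M_{\tau^\gamma}^{g,\gamma})^2\big]=\mathbb{E}\big[\langle M_{\tau^\gamma+\delta^\gamma}^{g,\gamma}\rangle-\langle M_{\tau^\gamma}^{g,\gamma}\rangle\big].
\end{equation*}
By (\ref{appen-MG}) the right-hand side equals $\mathbb{E}\big[\int_{\tau^\gamma}^{\tau^\gamma+\delta^\gamma}\big(L^\gamma\langle\Pi_s^\gamma,g\rangle^2-2\langle\Pi_s^\gamma,g\rangle L^\gamma\langle\Pi_s^\gamma,g\rangle\big)\,ds\big]$, which is at most $\gamma^d C\delta^\gamma$ by the second bound of Lemma \ref{proof-lem1} together with $0\le\delta^\gamma\le T$. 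Chebyshev's inequality then gives (ii).

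Estimate (iii) is deterministic: the first bound of Lemma \ref{proof-lem1} gives $|L^\gamma\langle\Pi_s^\gamma,g\rangle|\le C$ uniformly in $s$ and $\omega$, so $\big|\int_{\tau^\gamma}^{\tau^\gamma+\delta^\gamma}L^\gamma\langle\Pi_s^\gamma,g\rangle\,ds\big|\le C\delta^\gamma$ surely; since $\delta^\gamma\to0$, choosing $\gamma_0$ with $C\delta^\gamma<\epsilon$ for all $\gamma<\gamma_0$ makes the event in (iii) empty. The only genuinely delicate point is the optional-stopping identity in (ii): one must verify that the stopped processes are uniformly integrable so that the identity $\mathbb{E}[(M_{\tau^\gamma+\delta^\gamma}^{g,\gamma}-M_{\tau^\gamma}^{g,\gamma})^2]=\mathbb{E}[\langle M\rangle_{\tau^\gamma+\delta^\gamma}-\langle M\rangle_{\tau^\gamma}]$ remains valid at stopping times rather than deterministic times. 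This follows from the boundedness of $g$ and of the generator terms on the finite horizon $[0,T]$ furnished by Lemma \ref{proof-lem1}, which make $M^{g,\gamma}$ an $L^2$-bounded martingale; with that in hand parts (i) and (iii) are routine.
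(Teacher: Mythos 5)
Your proof is correct and follows essentially the same route as the paper's: Doob's maximal inequality applied to the submartingale $(M_t^{g,\gamma})^2$ together with the identity $\mathbf{E}[(M_T^{g,\gamma})^2]=\mathbf{E}[\langle M_T^{g,\gamma}\rangle]$ and the pathwise bound of Lemma \ref{proof-lem1} for (i), optional stopping plus Chebyshev for (ii), and the deterministic bound $|L^\gamma\langle\Pi_s^\gamma,g\rangle|\le C$ for (iii). Your added remark on justifying optional stopping via $L^2$-boundedness is a sensible elaboration of a step the paper takes for granted, but it does not change the argument.
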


\begin{proof}
We first show (iii). Let $C$ as in Lemma \ref{proof-lem1}. Since $\delta
^{\gamma }\rightarrow 0,$ there exists $\gamma _{0}$ such that $\delta
^{\gamma }<\frac{\epsilon }{2C}$ for $\gamma \leq \gamma _{0}.$ Then by
Lemma \ref{proof-lem1} 
\begin{equation*}
\left\vert \int_{\tau ^{\gamma }}^{\tau ^{\gamma }+\delta ^{\gamma
}}L^{\gamma }\left\langle \Pi _{s}^{\gamma },g\right\rangle ds\right\vert
\leq \delta ^{\gamma }C<\frac{\epsilon }{2},\text{ for }\gamma \leq \gamma
_{0}.
\end{equation*}
For (i), let $\gamma $ be fixed first. Since\ $(M_{0}^{g,\gamma
})^{2}-\left\langle M_{0}^{g,\gamma }\right\rangle =$ $0,\mathbf{P}$ a.e.
and $(M_{t}^{g,\gamma })^{2}-\left\langle M_{t}^{g,\gamma }\right\rangle $
is $\mathcal{F}_{t}-$martingale, by martingale inequality and Lemma \ref%
{proof-lem1}, we have, 
\begin{equation*}
\mathbf{P}\left\{ \omega :\sup_{t\in \lbrack 0,T]}\left\vert M_{t}^{g,\gamma
}\right\vert >\epsilon \right\} \leq \frac{1}{\epsilon ^{2}}\mathbf{E}\left[
\left( M_{T}^{g,\gamma }\right) ^{2}\right] \text{\ \ =}\frac{1}{\epsilon
^{2}}\mathbf{E}\left[ \left\langle M_{T}^{g,\gamma }\right\rangle \right]
\leq \frac{\gamma ^{d}CT}{\epsilon ^{2}}
\end{equation*}%
For (ii), by Lemma \ref{proof-lem1}, Chevyshev inequality, and Doob's
optional stopping, we have 
\begin{equation*}
\mathbf{P}\left\{ \omega :\left\vert M_{\tau ^{\gamma }+\delta ^{\gamma
}}^{g,\gamma }-M_{\tau ^{\gamma }}^{g,\gamma }\right\vert >\epsilon \right\}
\leq \frac{1}{\epsilon ^{2}}\mathbf{E}\left[ (M_{\tau ^{\gamma }+\delta
^{\gamma }}^{g,\gamma }-M_{\tau ^{\gamma }}^{g,\gamma })^{2}\right] =\frac{1%
}{\epsilon ^{2}}\mathbf{E}\left[ \left\langle M_{\tau ^{\gamma }+\delta
^{\gamma }}^{g,\gamma }\right\rangle -\left\langle M_{\tau ^{\gamma
}}^{g,\gamma }\right\rangle \right] \leq \frac{\gamma ^{d}C\delta ^{\gamma }%
}{\epsilon ^{2}}
\end{equation*}
\end{proof}

Next we prove an exponential estimate. We let $r_{\theta }(x)=e^{\theta
\left\vert x\right\vert }-1-\theta \left\vert x\right\vert $ and $s_{\theta
}(x)=e^{\theta x}-1-\theta x$ for $x,$ $\theta \in \mathbb{R}.$ We define%
\begin{equation*}
\text{\ }\phi (\sigma ,\theta ):=\sum_{k\in S}\sum_{x\in \Lambda ^{\gamma
}}c^{\gamma }(x,\sigma ,k)r_{\theta }(h(\sigma ^{x,k})-h\left( \sigma
\right) ),\text{ }\psi (\sigma ,\theta ):=\sum_{k\in S}\sum_{x\in \Lambda
^{\gamma }}c^{\gamma }(x,\sigma ,k)s_{\theta }(h(\sigma ^{x,k})-h\left(
\sigma \right) )\text{ }
\end{equation*}%
Then, from Proposition 8.8 in \citet{Darling08}, we have for $%
M_{T}^{g,\gamma }$ in (\ref{appen-MG})%
\begin{equation*}
Z_{t}^{g,\gamma }:=\exp \left\{ \theta M_{t}^{g,\gamma }-\int_{0}^{t}\psi
(\Sigma _{s}^{\gamma },\theta )ds\right\}
\end{equation*}%
is a supermartingale for $\theta \in \mathbb{R}$. Now we let $C_{g}:=2\sup
\left\vert g(u,i)\right\vert ,$ \ $C_{c}:=\sup \left\vert c^{\gamma
}(x,\sigma ,k)\right\vert .$

\begin{lemma}[Exponential Estimate]
\label{lem-exp-est}There exist $C$ depends on $C_{g},C_{c},S$ and $\epsilon
_{0}$ such that for all $\epsilon \leq \epsilon _{0}$ we have%
\begin{equation*}
\mathbf{P}\left\{ \sup_{t\leq T}\left\vert M_{t}^{g,\gamma }\right\vert \geq
\epsilon \right\} \leq 2e^{-\frac{\left\vert \Lambda ^{\gamma }\right\vert 
\text{ }\epsilon ^{2}}{TC}}
\end{equation*}
\end{lemma}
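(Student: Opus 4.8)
The plan is to run the standard exponential (Bernstein-type) martingale argument built on the supermartingale $Z_{t}^{g,\gamma }$ introduced above, combined with the maximal inequality for nonnegative supermartingales. First I would record the uniform increment bound: from the computation in the proof of Lemma \ref{proof-lem1} one has $h(\sigma ^{x,k})-h(\sigma )=\frac{1}{\left\vert \Gamma ^{\gamma }\right\vert }\left( g(\gamma x,k)-g(\gamma x,\sigma (x))\right) $, so that $\left\vert h(\sigma ^{x,k})-h(\sigma )\right\vert \leq C_{g}/\left\vert \Gamma ^{\gamma }\right\vert $ uniformly in $x,k,\sigma $. Using $s_{\theta }\leq r_{\theta }$ and the elementary bound $e^{a}-1-a\leq \tfrac{1}{2}a^{2}e^{a}$ for $a\geq 0$, together with $c^{\gamma }\leq C_{c}$ and the fact that the double sum defining $\psi $ has $\left\vert S\right\vert \left\vert \Lambda ^{\gamma }\right\vert $ terms, I obtain the deterministic bound
\[
\psi (\sigma ,\theta )\leq \left\vert S\right\vert \left\vert \Lambda ^{\gamma }\right\vert C_{c}\frac{\theta ^{2}C_{g}^{2}}{2\left\vert \Gamma ^{\gamma }\right\vert ^{2}}\exp \left( \frac{\theta C_{g}}{\left\vert \Gamma ^{\gamma }\right\vert }\right) =:\frac{B(\theta )}{T},
\]
uniformly in $\sigma $, and hence $\int_{0}^{T}\psi (\Sigma _{s}^{\gamma },\theta )\,ds\leq B(\theta )$ almost surely.

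The probabilistic core is then as follows. Since $M_{0}^{g,\gamma }=0$ by (\ref{appen-MG}), the supermartingale $Z_{t}^{g,\gamma }$ satisfies $Z_{0}^{g,\gamma }=1$; and since $\psi \geq 0$ makes $t\mapsto \int_{0}^{t}\psi \,ds$ nondecreasing, we have $\exp (\theta M_{t}^{g,\gamma })\leq Z_{t}^{g,\gamma }e^{B(\theta )}$ for every $t\leq T$. For $\theta >0$ the maximal inequality $\mathbf{P}\{\sup_{t}Z_{t}^{g,\gamma }\geq \lambda \}\leq \mathbf{E}[Z_{0}^{g,\gamma }]/\lambda $ for nonnegative supermartingales then gives
\[
\mathbf{P}\left\{ \sup_{t\leq T}M_{t}^{g,\gamma }\geq \epsilon \right\} \leq \mathbf{P}\left\{ \sup_{t\leq T}Z_{t}^{g,\gamma }\geq e^{\theta \epsilon -B(\theta )}\right\} \leq e^{-\theta \epsilon +B(\theta )}.
\]

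Finally I would optimize over $\theta $. Writing $\theta =\alpha \left\vert \Gamma ^{\gamma }\right\vert $ turns the exponential factor into $e^{\alpha C_{g}}$, and using $\left\vert \Gamma ^{\gamma }\right\vert \geq \left\vert \Lambda ^{\gamma }\right\vert $ bounds the exponent by $\left\vert \Lambda ^{\gamma }\right\vert \left( -\alpha \epsilon +\tfrac{1}{2}TC^{\prime }\alpha ^{2}e^{\alpha C_{g}}\right) $, with $C^{\prime }=\left\vert S\right\vert C_{c}C_{g}^{2}$. Choosing $\alpha $ proportional to $\epsilon $ (the near-optimal value $\alpha \sim \epsilon /(TC^{\prime })$) and restricting to $\epsilon \leq \epsilon _{0}$ so that $e^{\alpha C_{g}}\leq 2$, the bracket is $\leq -\epsilon ^{2}/(TC)$ for a constant $C$ depending only on $C_{g},C_{c},\left\vert S\right\vert $, which yields $\mathbf{P}\{\sup_{t}M_{t}^{g,\gamma }\geq \epsilon \}\leq e^{-\left\vert \Lambda ^{\gamma }\right\vert \epsilon ^{2}/(TC)}$. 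Repeating the argument with $-M_{t}^{g,\gamma }$ (equivalently $\theta <0$) and summing the two estimates produces the factor $2$ in the statement.

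I expect the main obstacle to be the bookkeeping in this last optimization: arranging that the constant $C$ depends only on $C_{g},C_{c},\left\vert S\right\vert $ while the Gaussian rate $\epsilon ^{2}$, the time horizon $T$, and the population factor $\left\vert \Lambda ^{\gamma }\right\vert $ all land in the correct places, and keeping the factor $\exp (\theta C_{g}/\left\vert \Gamma ^{\gamma }\right\vert )$ uniformly bounded. It is precisely the need to control this factor that forces the threshold $\epsilon \leq \epsilon _{0}$; everything else is a routine application of the supermartingale machinery already set up before the statement.
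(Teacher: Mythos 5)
Your proposal is correct and follows essentially the same route as the paper's proof: a deterministic uniform bound on $\psi$ via the increment bound $\left\vert h(\sigma^{x,k})-h(\sigma)\right\vert \leq C_{g}/\left\vert \Gamma^{\gamma}\right\vert$ and the inequality $e^{a}-1-a\leq \tfrac{1}{2}a^{2}e^{a}$, followed by the maximal inequality for the supermartingale $Z_{t}^{g,\gamma}$ with $\theta$ chosen proportional to $\epsilon$ and the threshold $\epsilon_{0}$ imposed to keep the factor $\exp(\theta C_{g}/\left\vert \Gamma^{\gamma}\right\vert)$ bounded, and the factor $2$ obtained by repeating the argument for $-M_{t}^{g,\gamma}$. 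The only differences from the paper are cosmetic (the paper fixes $\theta=\epsilon/(AT)$ explicitly and bounds the exponential factor by $e$ rather than $2$).
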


\begin{proof}
We choose $\epsilon _{0}\leq \frac{1}{2}\left\vert S\right\vert C_{g}C_{c}T$
and let $A=\frac{1}{\left\vert \Lambda ^{\gamma }\right\vert }\left\vert
S\right\vert C_{g}^{2}C_{c}e,$ $\theta =\frac{\epsilon }{AT}.$ Then since $%
r_{\theta }$ is increasing in $\mathbb{R}_{+},$ 
\begin{equation*}
r_{\theta }\left( h(\sigma ^{x,k})-h(\sigma )\right) \leq r_{\theta }\left( 
\frac{1}{\left\vert \Lambda ^{\gamma }\right\vert }C_{g}\right) \leq \frac{1%
}{2}\left( \frac{1}{\left\vert \Lambda ^{\gamma }\right\vert }C_{g}\theta
\right) ^{2}e^{\frac{1}{\left\vert \Lambda ^{\gamma }\right\vert }\theta
C_{g}}\text{ for all }\sigma \in S^{\Lambda \gamma }
\end{equation*}%
where in the last line we used $e^{x}-1-x\leq \frac{1}{2}x^{2}e^{x}$ for all 
$x>0.$ Also for $\epsilon \leq \epsilon _{0},$%
\begin{equation*}
\frac{1}{\left\vert \Lambda ^{\gamma }\right\vert }\theta C_{g}=\frac{1}{%
\left\vert \Lambda ^{\gamma }\right\vert }\frac{\epsilon }{AT}C_{g}\leq 
\frac{1}{\left\vert \Lambda ^{\gamma }\right\vert }\frac{1}{2}\frac{%
\left\vert S\right\vert C_{g}^{2}C_{c}}{A}\leq \frac{1}{2e}<1
\end{equation*}%
Thus%
\begin{equation*}
\int_{0}^{T}\phi (\Sigma _{t}^{\gamma },\theta )dt\leq \left\vert
S\right\vert \left\vert \Lambda ^{\gamma }\right\vert \frac{1}{\left\vert
\Lambda ^{\gamma }\right\vert ^{2}}\frac{1}{2}C_{g}^{2}\theta ^{2}e^{\frac{1%
}{\left\vert \Lambda ^{\gamma }\right\vert }\theta C_{g}}C_{c}T\leq \frac{1}{%
2}\frac{1}{\left\vert \Lambda ^{\gamma }\right\vert }\left\vert S\right\vert
C_{g}^{2}C_{c}e\theta ^{2}T=\frac{1}{2}A\theta ^{2}T\text{ \ for all }\omega
\in \Omega
\end{equation*}%
So, since $\psi (\sigma ,\theta )\leq $ $\phi (\sigma ,\theta ),$ 
\begin{eqnarray*}
\mathbf{P}\left\{ \sup_{t\leq T}M_{t}^{g,\gamma }>\epsilon \right\} &=&%
\mathbf{P}\left\{ \sup_{t\leq T}Z_{t}^{g,\gamma }>\exp [\theta \epsilon
-\int_{0}^{T}\psi (\Sigma _{t}^{\gamma },\theta )dt]\right\} \leq \mathbf{P}%
\left\{ \sup_{t\leq T}Z_{t}^{g,\gamma }>\exp [\theta \epsilon -\frac{1}{2}%
A\theta ^{2}T]\right\} \\
&\leq &e^{\frac{1}{2}A\theta ^{2}T-\theta \epsilon }=e^{-\frac{\left\vert
\Lambda ^{\gamma }\right\vert \text{ }\epsilon ^{2}}{TC}}
\end{eqnarray*}%
where we choose $C:=2\left\vert S\right\vert C_{g}^{2}C_{c}e.$ Since the
same inequality holds for $-M_{t}^{g,\gamma },$ we obtain the desired result.%
\newline
\newline
\end{proof}

\noindent \underline{\textbf{Convergence}} \newline

\begin{lemma}[Relative Compactness]
The sequence $\left\{ \mathbf{Q}^{\gamma }\right\} $ in $\mathcal{P}\left(
D\left( [0,T];\mathcal{P}(\Lambda \times S)\right) \right) $ is relatively
compact.
\end{lemma}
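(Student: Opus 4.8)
The plan is to reduce relative compactness of the measure-valued laws $\{\mathbf{Q}^{\gamma}\}$ to tightness of a countable family of real-valued projections, and then to verify Aldous's criterion for each projection using the martingale estimates already established in Lemma \ref{proof-lem1} and Proposition \ref{proof-lem-e2}.

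First I would exploit that the target state space is compact: since $\Lambda \times S$ is a compact metric space, $\mathcal{P}(\Lambda \times S)$ with the topology of weak convergence is itself compact and metrizable. Consequently the compact-containment condition required for tightness in $D([0,T];\mathcal{P}(\Lambda \times S))$ is automatic. To handle the time regularity I would invoke an abstract tightness criterion of Jakubowski type \citep[see][]{Kipnis99, Ethier86}: fix a countable family $\{g_{m}\}\subset C(\Lambda \times S)$ which is dense and separates the points of $\mathcal{P}(\Lambda \times S)$; then $\{\mathbf{Q}^{\gamma}\}$ is relatively compact provided each real-valued process $\langle \Pi^{\gamma}_{\cdot},g_{m}\rangle$ is tight in $D([0,T];\mathbb{R})$.

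Next, for a fixed $g\in C(\Lambda \times S)$ I would verify Aldous's criterion for $\langle \Pi^{\gamma}_{\cdot},g\rangle$. The tightness of the one-time marginals is immediate, since $|\langle \pi^{\gamma},g\rangle|\leq \sup_{u,i}|g(u,i)|$ uniformly in $\gamma$. For the oscillation control I would use the martingale decomposition (\ref{appen-MG}), which yields, for any stopping time $\tau^{\gamma}\leq T$ and any constants $\delta^{\gamma}\to 0$,
\[
\langle \Pi^{\gamma}_{\tau^{\gamma}+\delta^{\gamma}},g\rangle-\langle \Pi^{\gamma}_{\tau^{\gamma}},g\rangle=\int_{\tau^{\gamma}}^{\tau^{\gamma}+\delta^{\gamma}}L^{\gamma}\langle \Pi^{\gamma}_{s},g\rangle\,ds+\bigl(M^{g,\gamma}_{\tau^{\gamma}+\delta^{\gamma}}-M^{g,\gamma}_{\tau^{\gamma}}\bigr).
\]
The drift term is handled by part (iii) of Proposition \ref{proof-lem-e2} (via the bound $|L^{\gamma}\langle \pi^{\gamma},g\rangle|\leq C$ of Lemma \ref{proof-lem1}, the integral is at most $\delta^{\gamma}C$ and so is $<\epsilon$ for $\gamma$ small, with probability one), while the martingale increment is handled by part (ii), which gives $\mathbf{P}\{|M^{g,\gamma}_{\tau^{\gamma}+\delta^{\gamma}}-M^{g,\gamma}_{\tau^{\gamma}}|\geq \epsilon\}\leq \gamma^{d}C\delta^{\gamma}/\epsilon^{2}\to 0$. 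Hence the increment tends to $0$ in probability, which is precisely Aldous's condition, and $\langle \Pi^{\gamma}_{\cdot},g\rangle$ is tight in $D([0,T];\mathbb{R})$.

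Combining the two steps, every scalar projection is tight, so the abstract criterion upgrades this to relative compactness of $\{\mathbf{Q}^{\gamma}\}$ in $\mathcal{P}(D([0,T];\mathcal{P}(\Lambda \times S)))$. I expect the main obstacle to lie not in the probabilistic estimates, which are already packaged in Lemma \ref{proof-lem1} and Proposition \ref{proof-lem-e2}, but in setting up the reduction cleanly: one must confirm that the chosen countable family $\{g_{m}\}$ genuinely determines the weak topology on the compact space $\mathcal{P}(\Lambda \times S)$ and that all hypotheses of the abstract tightness theorem (the separating and closure properties of $\{g_{m}\}$, together with the compact-containment condition) are satisfied, so that tightness of the real-valued projections legitimately implies tightness of the measure-valued processes.
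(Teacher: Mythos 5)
Your proposal is correct and follows essentially the same route as the paper: the paper likewise reduces relative compactness to that of the scalar projections $\langle \Pi^{\gamma}_{\cdot},g\rangle$ (via Proposition 1.7 of Kipnis--Landim, which plays the role of your Jakubowski-type criterion), establishes the uniform sup-bound from $|\langle\pi^{\gamma},g\rangle|\leq 2\sup|g|$, and verifies Aldous's oscillation condition by splitting the increment into the drift integral (controlled deterministically by Lemma \ref{proof-lem1} and part (iii) of Proposition \ref{proof-lem-e2}) and the martingale increment (controlled by part (ii)). No gaps.
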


\begin{proof}
By Proposition 1.7 in \citet[][p.54]{Kipnis99}, we show that $\left\{ 
\mathbf{Q}^{\gamma }g^{-1}\right\} $ is relatively compact in $\mathcal{P}%
\left( D([0,T];\mathbb{R)}\right) $ for each $g\in C(\Lambda \times S),$
where the definition of $\mathbf{Q}^{\gamma }g^{-1}$ is as follows$:$ for
any Borel set $A$ in $D([0,T];\mathbb{R)}$%
\begin{equation*}
\mathbf{Q}^{\gamma }g^{-1}(A):=\mathbf{Q}^{\gamma }\left\{ \pi .\in D\left(
[0,T];\mathcal{P}(\Lambda \times S)\right) :\left\langle \pi
.,g\right\rangle \in A\right\}
\end{equation*}%
So, from Theorem 1 in \citet{Aldous78} and Prohorov Theorem in %
\citet[][p.125]{Billingsley68}, it is enough to show that \newline
(i) for $\eta >0,$ there exists $a$ such that%
\begin{equation*}
\mathbf{Q}^{\gamma }g^{-1}\left\{ x\in D\left( [0,T];\mathbb{R}\right)
:\sup_{t}\left\vert x(t)\right\vert >a\right\} \leq \eta \text{ for }\gamma
\leq 1
\end{equation*}%
\newline
(ii)%
\begin{equation*}
\mathbf{P}\left\{ \omega :\left\vert \left\langle \Pi _{\tau ^{\gamma
}+\delta ^{\gamma }}^{\gamma },g\right\rangle -\left\langle \Pi _{\tau
^{\gamma }}^{\gamma },g\right\rangle \right\vert >\epsilon \right\}
\rightarrow 0
\end{equation*}%
for all $\epsilon >0,$ for $(\tau ^{\gamma },\delta ^{\gamma })$ satisfying
the condition (1) and (2) of Proposition \ref{proof-lem-e2}$.$ For (i),
since $g$ is bounded, it is enough to choose $a=2\sup \left\vert
g(u,i)\right\vert $; i.e., $\mathbf{Q}^{\gamma }g^{-1}\left\{ x\in D\left(
[0,T];\mathbb{R}\right) :\sup_{t}\left\vert x(t)\right\vert >a\right\} =%
\mathbf{Q}^{\gamma }\{\pi .:\sup_{t}\left\vert \left\langle \pi
_{t},g\right\rangle \right\vert >a\}=0$ since $\left\vert \left\langle \pi
.,g\right\rangle \right\vert <a$ for all $\pi .$ For (ii) 
\begin{eqnarray*}
&&\mathbf{P}\left\{ \omega :\left\vert \left\langle \Pi _{\tau ^{\gamma
}+\delta ^{\gamma }}^{\gamma },g\right\rangle -\left\langle \Pi _{\tau
^{\gamma }}^{\gamma },g\right\rangle \right\vert >\epsilon \right\} \leq 
\mathbf{P}\left\{ \omega :\left\vert M_{\tau ^{\gamma }+\delta ^{\gamma
}}^{g,\gamma }-M_{\tau ^{\gamma }}^{g,\gamma }\right\vert >\frac{\epsilon }{2%
}\right\} +\mathbf{P}\left\{ \omega :\sup_{t\in \lbrack 0,T]}\left\vert
M_{t}^{g,\gamma }\right\vert >\frac{\epsilon }{2}\right\} \\
&\leq &\frac{\gamma ^{d}C\delta ^{\gamma }}{\epsilon ^{2}}\text{ \ \ for }%
\gamma \leq \gamma _{0}\text{ chosen in Proposition \ref{proof-lem-e2}}
\end{eqnarray*}
\end{proof}

Let $\mathbf{Q}^{\ast }$ be a limit point of $\left\{ \mathbf{Q}^{\gamma
}\right\} $ and choose a subsequence $\left\{ \mathbf{Q}^{\gamma
_{k}}\right\} $ converging weakly to $\mathbf{Q}^{\ast }.$ Hereafter we
denote the stochastic process defined on $\Lambda ^{\gamma }$ by $\left\{
\Sigma ^{\Lambda ^{\gamma }}\right\} $ and its restriction on $\Gamma
^{\gamma }$ by $\left\{ \Sigma ^{\Gamma ^{\gamma }}\right\} .$ With these
notations, equation (\ref{appen-MG}) becomes%
\begin{equation}
\left\langle \Pi _{t}^{\Gamma ^{\gamma }},g\right\rangle =\left\langle \Pi
_{0}^{\Gamma ^{\gamma }},g\right\rangle +\frac{\left\vert \Lambda ^{\gamma
}\right\vert }{\left\vert \Gamma ^{\gamma }\right\vert }\int_{0}^{t}ds%
\sum_{k\in S}\int_{\mathbf{\Lambda }\times S}c(u,i,k,\Pi _{s}^{\Gamma
^{\gamma }})\left( g(u,k)-g(u,i)\right) \Pi _{s}^{\Lambda ^{\gamma
}}(dudi)+M_{t}^{g,\gamma }  \label{eq:mg-fixed}
\end{equation}%
Let $\pi \in \mathcal{P}(\Gamma \times S)$ and we define $\pi _{\Lambda
}(du,di):=\pi (du\cap \Lambda ,di)$.

\begin{lemma}[Characterization of Limit Points]
For all $\epsilon >0,$%
\begin{equation*}
\mathbf{Q}^{\ast }\left\{ \pi .:\sup_{t\in \lbrack 0,T]}\left\langle \pi
_{t},g\right\rangle -\left\langle \pi _{0},g\right\rangle
-\int_{0}^{t}ds\sum_{k\in S}\left[ \int_{\Lambda \times S}c(u,i,k,\pi
_{s})(g(u,k)-g(u,i))\pi _{\Lambda ,s}(dudi)\right] >\epsilon \right\} =0,
\end{equation*}%
i.e. the limiting process is concentrated on the weak solutions of the IDE (%
\ref{lf-de}).
\end{lemma}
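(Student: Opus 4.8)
The plan is to realize the bracketed expression as one fixed, bounded, measurable functional on path space, to control it under the prelimit laws $\mathbf{Q}^{\gamma}$ using the martingale estimates already established, and then to push the bound through the weak limit. For the fixed $g\in C(\Gamma\times S)$ set
\begin{equation*}
\Psi(\pi_{\cdot}):=\sup_{t\in[0,T]}\left| \langle \pi_{t},g\rangle-\langle \pi_{0},g\rangle-\int_{0}^{t}\!ds\sum_{k\in S}\int_{\Lambda\times S}\mathbf{c}(u,i,k,\pi_{s})\,(g(u,k)-g(u,i))\,\pi_{\Lambda,s}(du\,di)\right| ,
\end{equation*}
a measurable map on $D([0,T];\mathcal{P}(\Gamma\times S))$; the claim is precisely that $\mathbf{Q}^{\ast}\{\Psi>\epsilon\}=0$ for every $\epsilon>0$. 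The essential observation is that $\Psi$ is defined through the \emph{limiting} rate $\mathbf{c}$, so the same functional can be evaluated under every $\mathbf{Q}^{\gamma}$ and under $\mathbf{Q}^{\ast}$.

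First I would evaluate $\Psi$ along the microscopic process. Rewriting the martingale identity (\ref{eq:mg-fixed}), and recalling that the prefactor $|\Lambda^{\gamma}|/|\Gamma^{\gamma}|$ turns $\Pi_{s}^{\Lambda^{\gamma}}$ into the restriction $\Pi_{\Lambda,s}^{\Gamma^{\gamma}}$, one obtains under $\mathbf{Q}^{\gamma}$
\begin{equation*}
\langle \Pi_{t}^{\gamma},g\rangle-\langle \Pi_{0}^{\gamma},g\rangle-\int_{0}^{t}\!ds\sum_{k\in S}\int_{\Lambda\times S}\mathbf{c}(u,i,k,\Pi_{s}^{\gamma})\,(g(u,k)-g(u,i))\,\Pi_{\Lambda,s}^{\gamma}(du\,di)=M_{t}^{g,\gamma}+R_{t}^{\gamma},
\end{equation*}
where $R_{t}^{\gamma}$ collects the error of replacing the true rate $c^{\gamma}$ by $\mathbf{c}(\gamma x,\sigma(x),k,\pi^{\gamma}(\sigma))$ inside the generator. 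By condition \textbf{C1} this error is uniform in $\sigma$, so $\sup_{t\le T}|R_{t}^{\gamma}|\le T\sup|c^{\gamma}-\mathbf{c}|\to 0$ deterministically. Hence $\Psi(\Pi_{\cdot})\le \sup_{t\le T}|M_{t}^{g,\gamma}|+\sup_{t\le T}|R_{t}^{\gamma}|$, and for $\gamma$ small the second term is below $\epsilon/2$; applying Proposition \ref{proof-lem-e2}(i) (or the sharper Lemma \ref{lem-exp-est}) to the first gives
\begin{equation*}
\mathbf{Q}^{\gamma}\{\Psi>\epsilon\}\le \mathbf{P}\Big\{\sup_{t\le T}|M_{t}^{g,\gamma}|>\tfrac{\epsilon}{2}\Big\}\le \frac{4\gamma^{d}CT}{\epsilon^{2}}\xrightarrow[\gamma\to0]{}0 .
\end{equation*}

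It remains to transfer this to the limit point. The key point is that $\Psi$ is continuous at every \emph{continuous} trajectory: if $\pi_{\cdot}^{n}\to\pi_{\cdot}$ in the Skorokhod topology with $\pi_{\cdot}\in C([0,T];\mathcal{P})$, the convergence is in fact uniform, so $\langle\pi_{t}^{n},g\rangle\to\langle\pi_{t},g\rangle$ uniformly in $t$, while boundedness (\textbf{C2}) and weak continuity of $\pi\mapsto\mathbf{c}(u,i,k,\pi)$ (which holds because $\mathbf{c}$ depends on $\pi$ only through the convolutions $\mathcal{J}\ast f$ against the continuous kernel $\mathcal{J}$, cf.\ \textbf{C3}) make the time-integral converge uniformly; thus $\Psi(\pi_{\cdot}^{n})\to\Psi(\pi_{\cdot})$. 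Since a single strategy update changes $\langle\Pi_{t}^{\gamma},g\rangle$ by at most $C_{g}/|\Gamma^{\gamma}|\to0$, every limit point $\mathbf{Q}^{\ast}$ is concentrated on $C([0,T];\mathcal{P})$, so $\Psi$ is $\mathbf{Q}^{\ast}$-a.s.\ continuous. The continuous mapping theorem then yields $\Psi_{\ast}\mathbf{Q}^{\gamma_{k}}\to\Psi_{\ast}\mathbf{Q}^{\ast}$, and the open-set part of the portmanteau theorem gives $\mathbf{Q}^{\ast}\{\Psi>\epsilon\}\le\liminf_{k}\mathbf{Q}^{\gamma_{k}}\{\Psi>\epsilon\}=0$.

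I expect the main obstacle to be exactly this last step: verifying that $\Psi$ is continuous on the relevant path space and that $\mathbf{Q}^{\ast}$ charges only continuous paths. The endpoint evaluation $\pi_{\cdot}\mapsto\langle\pi_{t},g\rangle$ is \emph{not} Skorokhod-continuous at a trajectory with a jump at $t$ (nor at $t=T$), and the $\sup_{t}$ magnifies this; the argument only closes because the maximal jump of the empirical measure is $O(1/|\Gamma^{\gamma}|)$, forcing the limit onto $C([0,T];\mathcal{P})$. The weak continuity of the nonlocal drift $\mathbf{c}(\,\cdot\,,\pi)$ in $\pi$, inherited from the smoothness of $\mathcal{J}$ together with \textbf{C2}--\textbf{C3}, is what lets the time-integral term pass to the limit; the martingale bound and \textbf{C1} handle everything else routinely.
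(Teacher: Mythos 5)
Your proof is correct and follows essentially the same route as the paper's: realize the expression as a functional on $D([0,T];\mathcal{P}(\Gamma \times S))$, identify its value along $\Pi^{\gamma}_{\cdot}$ with the martingale $M_{t}^{g,\gamma}$ via the representation (\ref{eq:mg-fixed}) and the identity $\Pi_{\Lambda,s}=\frac{|\Lambda^{\gamma}|}{|\Gamma^{\gamma}|}\Pi_{s}^{\Lambda^{\gamma}}$, bound $\mathbf{Q}^{\gamma}\{\Phi>\epsilon\}$ by Proposition \ref{proof-lem-e2}, and pass to the limit by the portmanteau theorem. You are in fact more careful than the paper in two places it leaves implicit: the \textbf{C1} error $R_{t}^{\gamma}$ incurred by replacing $c^{\gamma}$ with $\mathbf{c}$ evaluated at the empirical measure (the paper writes that substitution as an exact equality), and the continuity of the functional, which the paper simply asserts on all of $D$ whereas you establish it at continuous trajectories and then show that $\mathbf{Q}^{\ast}$ charges only those.
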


\begin{proof}
First we define $\Phi :D(\left[ 0,T\right] ,\mathcal{P}(\Lambda \times
S))\rightarrow \mathbb{R}$ 
\begin{equation*}
\pi .\mapsto \left\vert \sup_{t\in \lbrack 0,T]}\left\langle \pi
_{t},g\right\rangle -\left\langle \pi _{0},g\right\rangle
-\int_{0}^{t}ds\sum_{k\in S}\left[ \int_{\Lambda \times S}c(u,i,k,\pi
_{s})(g(u,k)-g(u,i))\pi _{\Lambda ,s}(dudi)\right] \right\vert
\end{equation*}%
Then $\Phi $ is continuous, hence $\Phi ^{-1}(\left( \epsilon ,\infty
\right) )$ is open. From the weak convergence of $\left\{ \mathbf{Q}^{\gamma
_{k}}\right\} $ to $\mathbf{Q}^{\ast }$,%
\begin{equation*}
\mathbf{Q}^{\ast }\left\{ \pi .:\Phi (\pi .)>\epsilon \right\} \leq
\liminf_{l\rightarrow \infty }\mathbf{Q}^{\gamma _{l}}\left\{ \pi .:\Phi
(\pi .)>\epsilon \right\}
\end{equation*}%
Also,%
\begin{equation*}
\mathbf{Q}^{\gamma }\left\{ \pi .:\Phi (\pi .)>\epsilon \right\} =\mathbf{P}%
\left\{ \omega :\sup_{t\in \lbrack 0,T]}\left\vert M_{t}^{g,\gamma
}\right\vert >\epsilon \right\} \leq \frac{\gamma ^{d}CT}{\epsilon ^{2}}%
\text{ (by Proposition \ref{proof-lem-e2}) for }\gamma <\gamma _{0}
\end{equation*}%
The first equality follows from (\ref{eq:mg-fixed}) and the following
equality:%
\begin{equation*}
\Pi _{\Lambda ,s}(dudi)=\frac{1}{\left\vert \Gamma ^{\gamma }\right\vert }%
\sum_{x\in \Gamma ^{\gamma }\cap \Lambda }\delta _{(\gamma x,\Sigma
_{s}^{\Gamma ^{\gamma }}(x))}(dudi)=\frac{1}{\left\vert \Gamma ^{\gamma
}\right\vert }\sum_{x\in \Lambda ^{\gamma }}\delta _{(\gamma x,\Sigma
_{s}^{_{\Lambda \gamma }}(x))}(dudi)=\frac{\left\vert \Lambda ^{\gamma
}\right\vert }{\left\vert \Gamma ^{\gamma }\right\vert }\Pi _{s}^{\Lambda
^{\gamma }}(dudi).
\end{equation*}
\end{proof}

We denote explicitly by $dm\otimes dv$ as a product measure of Lebesgue
measure on $\Lambda $ and the counting measure on $S.$

\begin{lemma}[Absolutely Continuity]
We have%
\begin{equation*}
\mathbf{Q}^{\ast }\{\pi .:\pi _{t}(dudi)\text{ is absolutely continuous }%
\text{with respect to }dm\otimes dv\text{ for all }t\in \lbrack 0,T]\}=1\,.
\end{equation*}
\end{lemma}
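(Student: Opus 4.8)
The plan is to show that, $\mathbf{Q}^{\ast }$-almost surely, the time slice $\pi _{t}$ is dominated by a fixed finite multiple of $dm\otimes dv$ for every $t\in \lbrack 0,T]$ simultaneously; since domination by an absolutely continuous measure forces absolute continuity, this proves the lemma. The whole argument rests on one deterministic microscopic bound combined with the weak convergence $\mathbf{Q}^{\gamma _{k}}\rightarrow \mathbf{Q}^{\ast }$ already fixed above. First I would take a nonnegative $g\in C(\Gamma \times S)$ and use that each site carries exactly one strategy, so for every configuration $\sigma $
\[
\langle \pi ^{\gamma }(\sigma ),g\rangle =\frac{1}{\left\vert \Gamma ^{\gamma }\right\vert }\sum_{y\in \Gamma ^{\gamma }}g(\gamma y,\sigma (y))\leq \frac{1}{\left\vert \Gamma ^{\gamma }\right\vert }\sum_{y\in \Gamma ^{\gamma }}\sum_{i\in S}g(\gamma y,i)=:c_{\gamma }(g).
\]
The right-hand side depends neither on $\sigma $ nor on $t$, so $\sup_{t\leq T}\langle \Pi _{t}^{\gamma },g\rangle \leq c_{\gamma }(g)$ with $\mathbf{Q}^{\gamma }$-probability one. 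Moreover $c_{\gamma }(g)$ is a Riemann sum: since $\gamma \Gamma ^{\gamma }$ fills out $\Gamma $ and $\left\vert \Gamma ^{\gamma }\right\vert \approx \left\vert \Gamma \right\vert \gamma ^{-d}$, continuity of $g$ gives $c_{\gamma }(g)\rightarrow c_{\ast }(g):=\frac{1}{\left\vert \Gamma \right\vert }\int_{\Gamma \times S}g\,dm\otimes dv$ as $\gamma \rightarrow 0$.

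Next I would pass to the limit. For bounded continuous $g$ the pairing $\mu \mapsto \langle \mu ,g\rangle $ is continuous in the weak topology, and the time-supremum functional $\Psi _{g}(\pi _{\cdot }):=\sup_{t\leq T}\langle \pi _{t},g\rangle $ is lower semicontinuous (indeed continuous) on $D([0,T];\mathcal{P}(\Gamma \times S))$ for the Skorohod topology, because the supremum over $t$ is unaffected by the time reparametrizations inherent in $J_{1}$. Hence $\{\Psi _{g}>c\}$ is open, and the Portmanteau theorem along the weakly convergent subsequence yields, for any $\epsilon >0$,
\[
\mathbf{Q}^{\ast }\{\Psi _{g}>c_{\ast }(g)+\epsilon \}\leq \liminf_{k}\mathbf{Q}^{\gamma _{k}}\{\Psi _{g}>c_{\ast }(g)+\epsilon \}=0,
\]
the last equality holding because $c_{\gamma _{k}}(g)<c_{\ast }(g)+\epsilon $ for all large $k$ makes the event empty. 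Letting $\epsilon \downarrow 0$ gives $\Psi _{g}\leq c_{\ast }(g)$, that is $\langle \pi _{t},g\rangle \leq \frac{1}{\left\vert \Gamma \right\vert }\int g\,dm\otimes dv$ for every $t\in \lbrack 0,T]$, $\mathbf{Q}^{\ast }$-a.s.

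Finally I would remove the dependence on a single test function. Choosing a countable family $\{g_{m}\}\subset C(\Gamma \times S)$ of nonnegative functions, dense in the sense that the inequalities $\langle \mu ,g_{m}\rangle \leq c_{\ast }(g_{m})$ for all $m$ force $\langle \mu ,g\rangle \leq c_{\ast }(g)$ for every nonnegative $g\in C(\Gamma \times S)$, I intersect the corresponding full-measure events. On this intersection, for every $t$ and every nonnegative continuous $g$ one has $\langle \pi _{t},g\rangle \leq \frac{1}{\left\vert \Gamma \right\vert }\int g\,dm\otimes dv$, which says precisely that $\pi _{t}\leq \frac{1}{\left\vert \Gamma \right\vert }\,dm\otimes dv$ as measures; in particular $\pi _{t}\ll dm\otimes dv$ for all $t\in \lbrack 0,T]$, as claimed. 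I expect the only delicate point to be the topological behaviour of the functional $\Psi _{g}$ under Skorohod convergence; should lower semicontinuity be awkward to invoke directly, one can instead run the identical Portmanteau argument for each fixed $t$ lying outside the (at most countable) set of fixed times of discontinuity of $\mathbf{Q}^{\ast }$, and then recover all $t$ from the right-continuity of the paths in $D([0,T];\mathcal{P}(\Gamma \times S))$.
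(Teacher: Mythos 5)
Your proposal is correct and follows essentially the same route as the paper: both rest on the deterministic site-wise bound $\langle \pi^{\gamma}(\sigma),g\rangle \leq \frac{1}{|\Gamma^{\gamma}|}\sum_{y}\sum_{i}g(\gamma y,i)$, the Riemann-sum convergence of this bound to a multiple of $\int g\,dm\otimes dv$, and a passage to the limit along the weakly convergent subsequence to conclude that every time slice of the limit is dominated by (hence absolutely continuous with respect to) $dm\otimes dv$. The only difference is the technical device for the limit passage — you use the Portmanteau inequality for the open sets $\{\Psi_{g}>c_{\ast}(g)+\epsilon\}$ together with a countable dense family of test functions, whereas the paper passes to the limit through the expectation of the continuous functional $\sup_{t}|\langle \pi_{t},g\rangle|$ — and your version is, if anything, the more carefully quantified of the two.
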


\begin{proof}
We define $\Phi :D(\left[ 0,T\right] ;\mathcal{P}(\Gamma \times
S))\rightarrow \mathbb{R},\pi .\mapsto \sup_{t\in \lbrack 0,T]}\left\vert
\left\langle \pi _{t},g\right\rangle \right\vert .$ Then $\Phi $ is
continuous. Also%
\begin{equation*}
\left\vert \left\langle \pi ^{\gamma },g\right\rangle \right\vert \leq \frac{%
1}{\left\vert \Gamma ^{\gamma }\right\vert }\sum_{x\in \Gamma ^{\gamma
}}\left\vert g(\gamma x,\sigma (x))\right\vert \leq \sum_{l\in S}\frac{1}{%
\left\vert \Gamma ^{\gamma }\right\vert }\sum_{x\in \Gamma ^{\gamma
}}\left\vert g(\gamma x,l)\right\vert
\end{equation*}%
Thus%
\begin{equation*}
\sup_{t\in \lbrack 0,T]}\left\vert \left\langle \pi _{t}^{\gamma
},g\right\rangle \right\vert \leq \sum_{l\in S}\frac{1}{\left\vert \Gamma
^{\gamma }\right\vert }\sum_{x\in \Gamma ^{\gamma }}\left\vert g(\gamma
x,l)\right\vert
\end{equation*}%
We write $\pi _{\cdot }^{\ast }$ be a trajectory on which all $\mathbf{Q}%
^{\ast }$'s are concentrated. Then $\Pi _{\cdot }^{\gamma }\overset{\mathcal{%
D}}{\longrightarrow }\pi _{\cdot }^{\ast }$ (convergence in distribution),
so $\mathbf{E}\left( \Phi (\Pi _{\cdot }^{\gamma })\right) \rightarrow 
\mathbf{E}\left( \Phi (\pi _{\cdot }^{\ast })\right) .$ Also $\frac{1}{%
\left\vert \Gamma ^{\gamma }\right\vert }\sum_{x\in \Gamma ^{\gamma
}}\left\vert g(\gamma x,l)\right\vert \rightarrow \int_{\Lambda }\left\vert
g(u,l)\right\vert du$ for all $l$ by the Riemann sum approximation. Thus,%
\begin{equation*}
\sup_{t\in \lbrack 0,T]}\left\vert \left\langle \pi _{t}^{\ast
},g\right\rangle \right\vert =\Phi (\pi _{\cdot }^{\ast })=\lim_{\gamma
\rightarrow 0}\mathbf{E}\left( \Phi (\Pi _{\cdot }^{\gamma })\right) \leq
\lim_{\gamma \rightarrow 0}\sum_{l\in S}\frac{1}{\left\vert \Gamma ^{\gamma
}\right\vert }\sum_{x\in \Gamma ^{\gamma }}\left\vert g(\gamma
x,l)\right\vert =\int_{\mathbf{\Gamma }\times S}\left\vert g\left(
u,l\right) \right\vert dm\otimes dv
\end{equation*}%
Therefore, for all $t\in \lbrack 0,T],$ for all $g\in C(\Gamma \times S)$, 
\begin{equation*}
\left\vert \int_{\mathbf{\Gamma }\times S}g(u,l)\pi _{t}^{\ast
}(dudl)\right\vert \leq \int_{\mathbf{\Gamma }\times S}\left\vert g\left(
u,l\right) \right\vert dm\otimes dv
\end{equation*}%
so for all $t\in \lbrack 0,T]$ $\pi _{t}^{\ast }$ is absolutely continuous
with respect to $dm\otimes dv.$
\end{proof}

We also see that all limit points of the sequence $\left\{ \mathbf{Q}%
^{\gamma }\right\} $ are concentrated on the trajectories that equal to $%
f^{0}dudi$ at time 0$,$ since 
\begin{eqnarray*}
&&\mathbf{Q}^{\ast }\left\{ \pi .:\left\vert \int g(u,i)\pi _{0}(dudi)-\frac{%
1}{\left\vert \Gamma \right\vert }\int g(u,i)f^{0}(u,i)dudi\right\vert
>\epsilon \right\} \\
&\leq &\liminf_{k\rightarrow \infty }\mathbf{Q}^{\gamma _{k}}\left\{ \pi
.:\left\vert \int g(u,i)\pi _{0}(dudi)-\frac{1}{\left\vert \Gamma
\right\vert }\int g(u,i)f^{0}(u,i)dudi\right\vert >\epsilon \right\} =0,
\end{eqnarray*}%
where the definition of sequence of product measures with a slowly varying
parameter implies the last equality by Proposition 0.4 %
\citet[][p.44]{Kipnis99}.

So far we have shown that $\mathbf{Q}^{\ast }$'s are concentrated on the
trajectories that are the weak solutions of the integro-differential
equations. Next we show the uniqueness of weak solutions defined in the
following way. Let $\mathcal{A}(f)(u,i):=\sum_{k\in S}\mathbf{c}%
(u,k,i,f)f_{\Lambda }(t,u,k)-f_{\Lambda }(t,u,i)\sum_{k\in S}\mathbf{c}%
(u,i,k,f).$ For an initial profile $f^{0}\in \mathcal{M},$\ $f\in \mathcal{M}
$\ is a weak solution of the Cauchy problem: 
\begin{equation}
\frac{\partial f_{t}}{\partial t}=\mathcal{A}(f_{t}),\text{ }f_{0}=f^{0}
\label{proof-cauchy}
\end{equation}%
if for every function $g\in C(\Gamma \times S),$ for all $t<T,\left\langle
f_{t},g\right\rangle =\int_{0}^{t}\left\langle \mathcal{A}%
(f_{s}),g\right\rangle ds.$Observe that from $\mathbf{C3}$ $\mathcal{A}$
satisfies the Lipschitz condition: there exists $C$ such that for all $f,%
\tilde{f}\in L^{\infty }\left( [0,T];\text{ }L^{\infty }(\Gamma \times
S\right) ),$ $\left\Vert \mathcal{A}(f)-\mathcal{A}(\tilde{f})\right\Vert
_{L^{2}(\Gamma \times S)}\leq C\left\Vert f-\tilde{f}\right\Vert
_{L^{2}(\Gamma \times S)}.$

\begin{lemma}[Uniqueness of Weak Solutions]
Weak solutions of the Cauchy problem (\ref{proof-cauchy}) which belong to 
\newline
$L^{\infty }\left( [0,T];L^{2}(\Gamma \times S)\right) $ are unique.
\end{lemma}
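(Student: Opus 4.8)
The plan is to prove uniqueness by a Gronwall argument carried out in $L^{2}(\Gamma \times S)$, using only the Lipschitz property of $\mathcal{A}$ recorded just above the statement. Suppose $f$ and $\tilde{f}$ are two weak solutions of (\ref{proof-cauchy}) in $L^{\infty}([0,T];L^{2}(\Gamma \times S))$ sharing the same initial datum $f^{0}$, and set $w_{t}:=f_{t}-\tilde{f}_{t}$. Since both solutions carry the same (fixed) boundary part, $w_{t}$ is supported on $\Lambda$. Subtracting the two weak formulations (the common initial data ensuring no constant term survives), one gets, for every $g\in C(\Gamma \times S)$ and every $t\leq T$,
\begin{equation*}
\left\langle w_{t},g\right\rangle =\int_{0}^{t}\left\langle \mathcal{A}(f_{s})-\mathcal{A}(\tilde{f}_{s}),g\right\rangle ds\,.
\end{equation*}

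First I would upgrade the class of admissible test functions. Because $C(\Gamma \times S)$ is dense in $L^{2}(\Gamma \times S)$, and because for each fixed $t$ both sides of this identity are continuous linear functionals of $g$ in the $L^{2}$ topology (the right side because $\mathcal{A}(f_{s})-\mathcal{A}(\tilde{f}_{s})\in L^{2}$ with $s\mapsto \Vert \mathcal{A}(f_{s})-\mathcal{A}(\tilde{f}_{s})\Vert _{L^{2}}$ integrable on $[0,T]$), the identity extends to all $g\in L^{2}(\Gamma \times S)$. Next I would dualize: using $\Vert w_{t}\Vert _{L^{2}}=\sup_{\Vert g\Vert _{L^{2}}\leq 1}\left\langle w_{t},g\right\rangle$ together with Cauchy--Schwarz and the Lipschitz bound $\Vert \mathcal{A}(f_{s})-\mathcal{A}(\tilde{f}_{s})\Vert _{L^{2}}\leq C\Vert w_{s}\Vert _{L^{2}}$, every $g$ with $\Vert g\Vert _{L^{2}}\leq 1$ satisfies
\begin{equation*}
\left\langle w_{t},g\right\rangle =\int_{0}^{t}\left\langle \mathcal{A}(f_{s})-\mathcal{A}(\tilde{f}_{s}),g\right\rangle ds\leq \int_{0}^{t}C\Vert w_{s}\Vert _{L^{2}}\,ds\,.
\end{equation*}
Taking the supremum over such $g$ yields the closed integral inequality $\Vert w_{t}\Vert _{L^{2}}\leq C\int_{0}^{t}\Vert w_{s}\Vert _{L^{2}}\,ds$ for $t\in [0,T]$. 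Since $w\in L^{\infty}([0,T];L^{2})$, the map $s\mapsto \Vert w_{s}\Vert _{L^{2}}$ is bounded and measurable, so Gronwall's inequality applies and forces $\Vert w_{t}\Vert _{L^{2}}=0$ for all $t$; hence $f_{t}=\tilde{f}_{t}$ in $L^{2}(\Gamma \times S)$ for every $t\in [0,T]$, which is the asserted uniqueness.

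The routine steps (Cauchy--Schwarz, Gronwall) are harmless; I expect the only points needing genuine care to be the two technical justifications feeding the estimate: (i) that $s\mapsto \mathcal{A}(f_{s})-\mathcal{A}(\tilde{f}_{s})$ is a well-defined, Bochner-measurable $L^{2}$-valued function integrable on $[0,T]$, so that both the density extension of the test-function class and the dualization are legitimate (this follows from $f,\tilde{f}\in L^{\infty}([0,T];L^{2})$ and the Lipschitz continuity of $\mathcal{A}$); and (ii) that the Lipschitz constant $C$ is uniform in $(u,i)$ and in $t$, which is precisely what condition \textbf{C3} delivers through its consequence for $\mathcal{A}$ stated above. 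Once these are in place, the Gronwall closure is immediate and completes the proof.
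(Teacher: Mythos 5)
Your proof is correct, but it takes a genuinely different route from the paper's. The paper also reduces to the identity $\langle \bar f_t,g\rangle=\int_0^t\langle \mathcal{A}(f_s)-\mathcal{A}(\tilde f_s),g\rangle\,ds$, but then invests most of its effort in showing that $t\mapsto\Vert \bar f_t\Vert_{L^2}^2$ is differentiable: it mollifies $\bar f_t$ in space with a kernel $\eta_\epsilon$, differentiates the mollified quantity, passes to the limit $\epsilon\to 0$ by dominated convergence, and only then applies the \emph{differential} form of Gronwall to $\frac{d}{dt}\Vert\bar f_t\Vert_{L^2}^2\leq C\Vert\bar f_t\Vert_{L^2}^2$. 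You bypass the differentiability question entirely: you extend the test-function class from $C(\Gamma\times S)$ to $L^2(\Gamma\times S)$ by density (both sides being bounded linear functionals of $g$), take the supremum over the unit ball of $L^2$ to obtain the closed \emph{integral} inequality $\Vert w_t\Vert_{L^2}\leq C\int_0^t\Vert w_s\Vert_{L^2}\,ds$, and invoke the integral form of Gronwall for the bounded measurable function $t\mapsto\Vert w_t\Vert_{L^2}$. Your version is shorter and uses strictly weaker machinery; the two technical points you flag (Bochner measurability and integrability of $s\mapsto\mathcal{A}(f_s)-\mathcal{A}(\tilde f_s)$, and uniformity of the Lipschitz constant from \textbf{C3}) are exactly the right ones and are also implicitly needed by the paper's argument, so you are not assuming anything extra. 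The only thing the paper's energy-identity route buys is that it would survive in settings where $\mathcal{A}$ is merely one-sided Lipschitz or dissipative rather than globally Lipschitz; under the hypotheses actually in force here, your duality argument is the cleaner one.
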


\begin{proof}
Let $f_{t},$ $\tilde{f}_{t}$ be two weak solutions and $\bar{f}_{t}:=f_{t}-%
\tilde{f}_{t}$. Then, we have%
\begin{equation*}
\left\langle \bar{f}_{t},g\right\rangle =\int_{0}^{t}\left\langle \mathcal{A(%
}f_{s})-\mathcal{A(}\tilde{f}_{s}),g\right\rangle ds\text{ for all }g\in
C(\Gamma \times S)
\end{equation*}%
We show that $t\mapsto \left\Vert \bar{f}_{t}\right\Vert _{L^{2}(\Gamma
\times S)}^{2}$ is differentiable. Define a mollifier $\eta (x):=C\exp
\left( \frac{1}{\left\vert x\right\vert -1}\right) $ if $\left\vert
x\right\vert <1,$ \ $:=$ $0\ $if $\left\vert x\right\vert \geq 1,$ $C>0$ is
a constant such that $\int_{\mathbb{R}^{d}}\eta (x)dx=1.$ For $\epsilon >0,$
set $\eta _{\epsilon }(x):=\epsilon ^{-d}\eta (\epsilon ^{-1}x).$ For each $%
u\in \Gamma ,$ $i\in S$, define $h_{u,i}^{\epsilon }(v,k)=\eta _{\epsilon
}\left( u-v\right) \mathbf{1}_{\left\{ i=k\right\} }$ and%
\begin{equation*}
\bar{f}_{t}^{\epsilon }(u,i):=\int_{\Gamma \times S}\left( f_{t}(v,k)-\tilde{%
f}_{t}(v,k)\right) h_{u,i}^{\epsilon }(v,k)dvdk
\end{equation*}%
Then, 
\begin{equation*}
\left\vert \left\langle \mathcal{A(}f_{s})-\mathcal{A(}\tilde{f}%
_{s}),h_{u,i}^{\epsilon }\right\rangle \right\vert \leq \left\Vert \mathcal{%
A(}f_{s})-\mathcal{A(}\tilde{f}_{s})\right\Vert _{L^{2}}\left\Vert
h_{u,i}^{\epsilon }\right\Vert _{L^{2}}\leq C\left\Vert f_{s}-\tilde{f}%
_{s}\right\Vert _{L^{2}}\left\Vert h_{u,i}^{\epsilon }\right\Vert
_{L^{2}}\leq C\sup_{s\in \lbrack 0,T]}\left\Vert f_{s}-\tilde{f}%
_{s}\right\Vert _{L^{2}}\left\Vert h_{u,i}^{\epsilon }\right\Vert _{L^{2}}.
\end{equation*}%
Since $f_{s}-\tilde{f}_{s}\in $ $L^{\infty }\left( [0,T];L^{2}(\Gamma \times
S)\right) $ and $h_{u,i}^{\epsilon }\in C(\Gamma \times S)$ for each $u,i,$ $%
t\mapsto \bar{f}_{t}^{\epsilon }(u,i)$ is differentiable and its derivative
is $\bar{f}_{t}^{\epsilon \prime }(u,i)=\left\langle \mathcal{A(}f_{s})-%
\mathcal{A(}\tilde{f}_{s}),h_{u,i}^{\epsilon }\right\rangle .$\ Also, it
follows that $\left\Vert \bar{f}_{t}^{\epsilon }\right\Vert _{L^{2}}^{2}$ is
differentiable with respect to $t$ and%
\begin{equation*}
\frac{d}{dt}\left\Vert \bar{f}_{t}^{\epsilon }\right\Vert
_{L^{2}}^{2}=\int_{\Gamma \times S}2\left\langle \mathcal{A(}f_{t})-\mathcal{%
A(}\tilde{f}_{t}),h_{u,i}^{\epsilon }\right\rangle \bar{f}_{t}^{\epsilon
}(u,i)dudi,\text{ so\ }\left\Vert \bar{f}_{t}^{\epsilon }\right\Vert
_{L^{2}}^{2}=\int_{0}^{t}\left[ \int_{\Gamma \times S}2\left\langle \mathcal{%
A(}f_{s})-\mathcal{A(}\tilde{f}_{s}),h_{u,i}^{\epsilon }\right\rangle \bar{f}%
_{t}^{\epsilon }(u,i)dudi\right] ds\text{ }
\end{equation*}%
Then since $f_{t}^{\epsilon }\rightarrow f_{t}$ in$~\left\Vert \text{ }%
\right\Vert _{L^{2}}$ and $\bar{f}_{t}\in L^{\infty }\left(
[0,T];L^{2}(\Gamma \times S)\right) $ for a given $\ t,$ we have \TEXTsymbol{%
\vert} $\left\Vert \bar{f}_{t}^{\epsilon }\right\Vert
_{L^{2}}^{2}-\left\Vert \bar{f}_{t}\right\Vert _{L^{2}}^{2}|$ $\rightarrow
0. $ Also because $\left\langle \mathcal{A(}f_{s})-\mathcal{A(}\tilde{f}%
_{s}),h_{u,i}^{\epsilon }\right\rangle \rightarrow \mathcal{A}(f_{t})(u,i)-%
\mathcal{A}(\tilde{f}_{t})(u,i)$ for $a.e.u,$ and all $i,t$, by the dominant
convergence theorem we have%
\begin{equation*}
\left\Vert \bar{f}_{t}\right\Vert _{L^{2}}^{2}=\int_{0}^{t}2\left\langle 
\mathcal{A}(f_{s})-\mathcal{A}(\tilde{f}_{s}),\bar{f}_{s}\right\rangle ds,
\end{equation*}%
so $\left\Vert \bar{f}_{t}\right\Vert _{L^{2}}^{2}$ is differentiable and 
\begin{equation*}
\frac{d}{dt}\left\Vert \bar{f}_{t}\right\Vert _{L^{2}}^{2}=\left\langle 
\mathcal{A}(f_{t})-\mathcal{A}(\tilde{f}_{t}),\bar{f}_{t}\right\rangle \leq
2\left\Vert \mathcal{A}(f_{t})-\mathcal{A}(\tilde{f}_{t})\right\Vert
_{L^{2}}\left\Vert \bar{f}_{t}\right\Vert _{L^{2}}\leq C\left\Vert \bar{f}%
_{t}\right\Vert _{L^{2}}^{2}
\end{equation*}%
Hence from Gronwell lemma, the uniqueness of the solutions follows.
\end{proof}

\begin{lemma}[Convergence in Probability]
\label{lem-conv}We have 
\begin{equation*}
\Pi _{t}^{\gamma }(du,di)\longrightarrow \frac{1}{\left\vert \Gamma
\right\vert }f(t,u,i)\,dudi\text{ in probability }\,.
\end{equation*}
\end{lemma}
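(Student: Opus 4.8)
The plan is to assemble the four preceding lemmas into the standard tightness-plus-uniqueness argument. First I would invoke the Relative Compactness lemma: since $\{\mathbf{Q}^{\gamma}\}$ is relatively compact in $\mathcal{P}(D([0,T];\mathcal{P}(\Gamma\times S)))$, every subsequence admits a weakly convergent sub-subsequence. Let $\mathbf{Q}^{\ast}$ denote any such limit point and pass to a subsequence $\{\mathbf{Q}^{\gamma_{k}}\}$ with $\mathbf{Q}^{\gamma_{k}}\Rightarrow\mathbf{Q}^{\ast}$.

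Next I would combine the Characterization of Limit Points lemma, the Absolute Continuity lemma, and the initial-value concentration established just above, to conclude that $\mathbf{Q}^{\ast}$ is supported on trajectories $\pi_{\cdot}$ that (i) are absolutely continuous with respect to $dm\otimes dv$ for every $t$, (ii) start from $\tfrac{1}{|\Gamma|}f^{0}\,dudi$ at $t=0$, and (iii) are weak solutions of the Cauchy problem (\ref{proof-cauchy}). Writing the density as $\tfrac{1}{|\Gamma|}f_{t}$, the profile $f_{t}$ is therefore a weak solution of (\ref{lf-de}) in $L^{\infty}([0,T];L^{2}(\Gamma\times S))$ with $f_{0}=f^{0}$.

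The decisive step is then the Uniqueness of Weak Solutions lemma: since such a weak solution is unique, there is a single admissible trajectory $\pi^{\ast}_{t}:=\tfrac{1}{|\Gamma|}f_{t}\,dudi$, and hence $\mathbf{Q}^{\ast}=\delta_{\pi^{\ast}}$ is the Dirac mass concentrated on it. Because this identification is independent of the chosen subsequence, all limit points of the relatively compact family $\{\mathbf{Q}^{\gamma}\}$ coincide with $\delta_{\pi^{\ast}}$, so the full sequence satisfies $\mathbf{Q}^{\gamma}\Rightarrow\delta_{\pi^{\ast}}$.

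Finally I would upgrade weak convergence to convergence in probability using the elementary fact that convergence in distribution to a deterministic limit is equivalent to convergence in probability. Since $\mathbf{Q}^{\gamma}\Rightarrow\delta_{\pi^{\ast}}$ and the limiting trajectory $\pi^{\ast}_{\cdot}$ is continuous in $t$, convergence in the Skorokhod topology on $D([0,T];\mathcal{P}(\Gamma\times S))$ reduces to uniform convergence, so $\sup_{t\in[0,T]}d(\Pi^{\gamma}_{t},\pi^{\ast}_{t})\to 0$ in probability for any metric $d$ metrizing the weak topology; testing against $g\in C(\Gamma\times S)$ then yields $\Pi^{\gamma}_{t}(dudi)\to\tfrac{1}{|\Gamma|}f_{t}\,dudi$ uniformly in $t$, which is the assertion. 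I expect the only delicate bookkeeping to be the normalization constant $\tfrac{1}{|\Gamma|}$ and verifying that the functionals used (suprema and time integrals of $\langle\pi_{\cdot},g\rangle$) are genuinely continuous on path space so that the weak-convergence inequalities apply; the substantive analytic content, namely existence via compactness and rigidity via uniqueness, is entirely carried by the earlier lemmas.
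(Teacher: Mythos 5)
Your proposal is correct and follows essentially the same route as the paper: the preceding lemmas (relative compactness, characterization of limit points, absolute continuity, concentration at the initial datum, and uniqueness of weak solutions) force every limit point of $\{\mathbf{Q}^{\gamma }\}$ to be the Dirac mass at the single trajectory $\pi _{t}^{\ast }=\frac{1}{\left\vert \Gamma \right\vert }f_{t}\,dudi$, hence the full sequence converges weakly to that Dirac mass, and convergence in distribution to a deterministic limit upgrades to convergence in probability. The one point to flag is that the only substantive content of the paper's proof of this particular lemma is precisely the step you assert without argument, namely that $t\mapsto \pi _{t}^{\ast }$ is continuous. This matters because Skorohod convergence of the paths $\Pi _{\cdot }^{\gamma }\rightarrow \pi _{\cdot }^{\ast }$ only yields weak convergence of $\Pi _{t}^{\gamma }$ at continuity points of the limit trajectory, so fixed-time convergence requires every $t\in \lbrack 0,T]$ to be such a point. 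The paper supplies this by noting that for any test function $g$ one has $\left\langle \pi _{t}^{\ast },g\right\rangle -\left\langle \pi _{t_{0}}^{\ast },g\right\rangle =\int_{t_{0}}^{t}\left\langle \mathcal{A}(\pi _{s}^{\ast }),g\right\rangle ds$ with a bounded integrand (by condition \textbf{C2}), so the trajectory is Lipschitz in a metric inducing the weak topology on $\mathcal{P}(\Gamma \times S)$. You should make this explicit; it is a short computation, but it is the entire proof of the lemma once the earlier results are in hand.
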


\begin{proof}
So far we established $\mathbf{Q}^{\gamma }\Rightarrow \mathbf{Q}^{\ast }$
(converge weakly) and equivalently $\Pi _{\cdot }^{\gamma }\rightarrow \pi
_{\cdot }^{\ast }$ in Skorohod topology (topology on $D([0,T],\mathcal{P}(%
\mathbf{T}^{d}\times S))$). If we show that $\Pi _{t}^{\gamma }\rightarrow
\pi _{t}^{\ast }$ weakly in $\mathcal{P}\left( \Gamma ^{d}\times S\right) $
or equivalently $\Pi _{t}^{\gamma }\overset{\mathcal{D}}{\rightarrow }\pi
_{t}^{\ast }$ in distribution for fixed time $t<T,$ then we have%
\begin{equation}
\Pi _{t}^{\gamma }\overset{\mathbf{P}}{\rightarrow }\pi _{t}^{\ast }\text{
in probability.}  \label{eq:appen-prob}
\end{equation}%
Since $\Pi _{\cdot }^{\gamma }\rightarrow \pi _{\cdot }^{\ast }$ in Skorohod
topology implies $\Pi _{t}^{\gamma }\rightarrow \pi _{t}^{\ast }$ weakly for
continuity points of $\pi _{\cdot }^{\ast }$ \citep[p.112][]{Billingsley68},
it is enough to show that $\pi _{\cdot }^{\ast }:t\mapsto \pi _{t}^{\ast }$
is continuous for all $t\in \lbrack 0,T]$ to obtain (\ref{eq:appen-prob}).
Let $t_{0\text{ }}<T$ and $\left\{ g_{k}\right\} $ a dense family in $%
C(\Gamma \times S)$. Since 
\begin{equation*}
\left\vert \int_{t_{0}}^{t}\left\langle A(\pi _{s}^{\ast
}),g_{k}\right\rangle ds\right\vert \leq (t-t_{0})\sup_{s\in \lbrack
0,T]}\left\langle \mathcal{A}(\pi _{s}^{\ast }),g_{k}\right\rangle
\end{equation*}%
we choose $\delta \,\leq \min \left\{ 1,\epsilon \right\} .$ Then for $%
\left\vert t-t_{0}\right\vert \leq \delta ,$ 
\begin{equation*}
\frac{\left\vert \int_{t_{0}}^{t}\left\langle A(\pi _{s}^{\ast
}),g_{k}\right\rangle ds\right\vert }{1+\left\vert
\int_{t_{0}}^{t}\left\langle A(\pi _{s}^{\ast }),g_{k}\right\rangle
ds\right\vert }\leq \frac{\delta \sup_{s\in \lbrack 0,T]}\left\langle 
\mathcal{A}(\pi _{s}^{\ast }),g_{k}\right\rangle }{1+\delta \sup_{s\in
\lbrack 0,T]}\left\langle \mathcal{A}(\pi _{s}^{\ast }),g_{k}\right\rangle }%
\leq \delta \frac{\sup_{s\in \lbrack 0,T]}\left\langle \mathcal{A}(\pi
_{s}^{\ast }),g_{k}\right\rangle }{1+\sup_{s\in \lbrack 0,T]}\left\langle 
\mathcal{A}(\pi _{s}^{\ast }),g_{k}\right\rangle }\leq \delta
\end{equation*}%
so $\left\Vert \pi _{t}-\pi _{t_{0}}\right\Vert _{\mathcal{P}(\Gamma \times
S)}\,\leq \epsilon $ and $\pi _{\cdot }^{\ast }:t\mapsto \pi _{t}^{\ast }$
is continuous, all $t\in \lbrack 0,T],$ thus all $t\in \lbrack 0,T]$ are
continuity point of $\pi _{\cdot }^{\ast }$
\end{proof}

\noindent \underline{\textbf{Proof of Theorem \ref{thm-long-range-fixedBC}}} 

\noindent From Lemma \ref{lem-conv} we have, for $t<T$%
\begin{equation*}
\Pi _{t}^{_{\Lambda \gamma }}(du,di)=\frac{\left\vert \Gamma ^{\gamma
}\right\vert }{\left\vert \Lambda ^{\gamma }\right\vert }\Pi _{t}^{\Gamma
^{\gamma }}(du\cap A,di)\overset{\mathbf{P}}{\rightarrow }\frac{1}{%
\left\vert \Lambda \right\vert }f_{t}(du\cap A,di)
\end{equation*}%
Since $f_{t}(u,i)(du\cap \Lambda )di=f_{\Lambda ,t}(u,i)dudi,$ from (\ref%
{proof-rem2}) we obtain%
\begin{equation*}
\left\langle f_{t},g\right\rangle =\left\langle f_{0},g\right\rangle
+\int_{0}^{t}ds\sum_{k\in S}\int_{\Lambda \times S}c(u,i,k,\frac{1}{%
\left\vert \Gamma \right\vert }f_{s}dudi)\left( g(u,k)-g(u,i)\right)
f_{\Lambda ,s}(dudi)
\end{equation*}%
Since $\left\vert \Gamma ^{\gamma }\right\vert \Pi _{t}^{\Gamma ^{\gamma
}}=\left\vert \Lambda ^{\gamma }\right\vert \Pi _{t}^{\Lambda ^{\gamma
}}+\left\vert \Lambda ^{\gamma }{}^{c}\right\vert \Pi _{0}^{\Lambda ^{\gamma
}{}^{c}},$ $\left\vert \Gamma \right\vert \Pi _{t}^{\Gamma ^{\gamma }}%
\overset{\mathbf{P}}{\rightarrow }f_{t}(u,i)dudi,$ $\left\vert \Lambda
\right\vert \Pi _{t}^{\Lambda ^{\gamma }}\overset{\mathbf{P}}{\rightarrow }%
f_{\Lambda ,t}(u,i)dudi,$ and $\left\vert \Lambda ^{c}\right\vert \Pi
_{0}^{\Lambda ^{\gamma }{}^{c}}\overset{\mathbf{P}}{\rightarrow }f_{\Lambda
^{c}}(u,i)dudi,$ we have $f_{t}=f_{\Lambda ,t}+f_{\Lambda ^{c}}$ for all
\thinspace $t$.

\subsection{Proof of Theorem \protect\ref{Th-Mean-Markov copy(1)}}

To do this first we define a reduction mapping, $\phi :S^{\Lambda
^{n}}\rightarrow \Delta ^{n},$%
\begin{equation*}
\sigma \mapsto \phi (\sigma ),\text{ \ }\phi (\sigma )(i):=\frac{1}{%
\left\vert \Lambda ^{n}\right\vert }\sum\limits_{y\in \Lambda ^{n}}\delta
_{\sigma (y)}(\{i\})
\end{equation*}%
For $g\in L^{\infty }(\Delta ^{n};\mathbb{R})$ we let $f:=g\circ \phi \,$ $%
\in L^{\infty }(S^{\Lambda ^{n}};\mathbb{R})$, where$~f(\sigma )=g(\eta ).$
Then for $\eta =\phi (\sigma )$, we have $f(\sigma ^{x,k})-f(\sigma )=g(\eta
^{\sigma (x),k})-g(\eta )$ since%
\begin{equation*}
\phi (\sigma ^{x,k})(i)=\frac{1}{n^{d}}\sum_{y\in \Lambda _{n}}\delta
_{\sigma (y)}(\{i\})+\frac{1}{n^{d}}\delta _{k}(\{i\})-\frac{1}{n^{d}}\delta
_{\sigma (x)}(\{i\})=\eta ^{\sigma (x),k}(i)
\end{equation*}

\begin{proof}[Proof of Theorm \protect\ref{Th-Mean-Markov copy(1)}]
We check the case of imitative and comparing rates. Other cases can be
treated as a special case. By writing $m^{n}(k):=\sum_{l}a(k,l)\eta ^{n}(l),$
we find%
\begin{eqnarray*}
L_{n}f(\sigma ) &=&\sum_{k\in S}\sum_{x\in \Lambda _{n}}\eta
(k)F(m^{n}(k)-m^{n}(\sigma (x)))(g(\eta ^{\sigma (x),k})-g(\eta )) \\
&=&\sum_{k\in S}\sum_{j\in S}\left[ \sum_{x\in \Lambda _{n}}\delta _{\sigma
(x)}(\{j\})\right] \eta (k)F(m^{n}(k)-m^{n}(j))(g(\eta ^{j,k})-g(\eta )) \\
&=&\sum_{k\in S}\sum_{j\in S}n^{d}\eta (j)\eta
(k)F(m^{n}(k)-m^{n}(j))(g(\eta ^{j,k})-g(\eta )):=\sum_{k\in S}\sum_{j\in
S}n^{d}c^{M}(\eta ,j,k)(g(\eta ^{j,k})-g(\eta ))
\end{eqnarray*}%
Thus we obtain 
\begin{equation*}
L^{n}g(\eta )=\sum_{k\in S}\sum_{j\in S}n^{d}c^{M}(\eta ,j,k)(g(\eta
^{j,k})-g(\eta ))
\end{equation*}%
and this makes $\left\{ \eta _{t}\right\} $ a Markov chain and the rate is
given by $c^{M}(\eta ,j,k).$
\end{proof}

\subsection{Proof of Corollary \protect\ref{thm-uniform}}

\begin{proof}
It is enough to prove the exponential estimate. From (\ref{proof-rem2}) we
recall that 
\begin{equation}
\left\langle \Pi _{t}^{^{\gamma }},g\right\rangle =\left\langle \Pi
_{0}^{^{\gamma }},g\right\rangle +\int_{0}^{t}\sum_{k\in S}\int_{\mathbf{T}%
^{d}\times S}c(u,i,k,\Pi _{s}^{^{\gamma }})\left( g(u,k)-g(u,i)\right) \Pi
_{s}^{^{\gamma }}(dudi)ds+M_{t}^{g,\gamma }  \notag
\end{equation}%
for $g\in C(\mathbf{T}^{d}\times S).$ By taking $g(u,i)=1$ if $i=l$, $%
g(u,i)=0$ otherwise, we find 
\begin{equation*}
\eta _{t,l}^{n}=\eta _{0,l}^{n}+n^{d}\int_{0}^{t}\left[ \sum_{i\in
S}c^{M}(i,l,\eta _{s}^{n})\eta _{s,l}^{n}-\sum_{k\in S}c^{M}(l,k,\eta
_{s}^{n})\eta _{s,l}^{n}\right] ds+M_{t}^{l,n}
\end{equation*}%
We define $\beta _{l}(x):=\sum_{i\in S}\mathbf{c}^{M}(i,l,x)x_{l}-\sum_{k\in
S}\mathbf{c}^{M}(l,k,x)x_{l}$. Thus we have%
\begin{equation*}
\eta _{t,l}^{n}=\eta _{0,l}^{n}+n^{d}\int_{0}^{t}\beta _{l}(\eta
_{s}^{n})ds+M_{t}^{l,n},\text{ \ }\rho _{t,l}=\rho _{0,l}+\int_{0}^{t}\beta
_{l}(\rho _{s})ds
\end{equation*}%
From Lemma \ref{lem-exp-est}, we have $\mathbf{P}\left\{ \sup_{t\leq
T}\left\vert M_{t}^{l,n}\right\vert \geq \delta \right\} \leq 2e^{-n^{d}%
\frac{\delta ^{2}}{TC_{0}}}$ for each $l$ and for $\delta \leq \delta _{0},$
where we note that the choices of $C_{0}$ and $\delta _{0}$ does not depend
on $g$ since $\left\vert g(u,i)\right\vert \leq 1$ for all $u,i.$ Thus, $%
\mathbf{P}\left\{ \sup_{t\leq T}\left\Vert M_{t}^{n}\right\Vert _{u}\geq
\delta \right\} \leq 2\left\vert S\right\vert e^{-\frac{n^{d}\delta ^{2}}{%
TC_{0}}}.$ Therefore for $t\leq T,$ using the Lipschitz condition of $\beta $
we obtain 
\begin{equation*}
\sup_{\tau \leq t}\left\Vert \eta _{\tau }^{n}-\rho _{\tau }\right\Vert
_{u}\leq \left\Vert \eta _{0}^{n}-\rho _{0}\right\Vert
_{u}+L\int_{0}^{t}\sup_{\tau \leq s}\left\Vert \eta _{\tau }^{n}-\rho _{\tau
}\right\Vert _{u}ds+\sup_{t\leq T}\left\Vert M_{t}^{n}\right\Vert _{u}
\end{equation*}%
For $\epsilon _{0}$ in Lemma \ref{lem-exp-est}, we let $\delta =\frac{1}{3}%
e^{-LT}\epsilon $ for $\epsilon <\epsilon _{0}$ and define%
\begin{equation*}
\Omega _{0}=\left\{ \omega :\left\Vert \eta _{0}^{n}-\rho _{0}\right\Vert
_{u}\leq \delta \right\} ,\text{ }\Omega _{1}=\left\{ \omega :\sup_{t\leq
T}\left\Vert M_{t}^{n}\right\Vert _{u}\leq \delta \right\}
\end{equation*}%
Then when $\omega \in \Omega _{0}\cap \Omega _{1},$ we have $\sup_{\tau \leq
T}\left\Vert \eta _{\tau }^{n}-\rho _{\tau }\right\Vert _{u}\leq $ $2\delta
e^{LT}$ by Gronwell lemma. Choose $n_{0\text{ }}$such that $\left\Vert \eta
_{0}^{n}-\rho _{0}\right\Vert _{u}\leq \delta $ for $a.e$. $\omega $ for $%
n\geq n_{0}$. Then for $\epsilon \leq \epsilon _{0}$ and $n\geq n_{0},$%
\begin{eqnarray*}
P\left\{ \sup_{\tau \leq T}\left\Vert \eta _{\tau }^{n}-\rho _{n}\right\Vert
\geq \epsilon \right\} &\leq &P\left( \Omega _{0}^{c}\right) +P\left( \Omega
_{1}^{c}\right) \leq P\left\{ \omega :\left\Vert \eta _{0}^{n}-\rho
_{0}\right\Vert _{u}\geq \delta \right\} +P\left\{ \omega :\sup_{t\leq
T}\left\Vert M_{t}^{n}\right\Vert _{u}\geq \delta \right\} \\
&\leq &2\left\vert S\right\vert e^{-\frac{n^{d}\delta ^{2}}{TC_{0}}%
}=2\left\vert S\right\vert e^{-\frac{n^{d}\epsilon ^{2}}{TC}}
\end{eqnarray*}%
where $C:=9C_{0}e^{2LT}.$
\end{proof}

\subsection{Solutions of Linear IDE}

Applying Fourier transform to (\ref{var-eq}) element by element, we obtain 
\begin{equation}
\frac{\partial \hat{D}(k)}{\partial t}=(M\mathcal{\hat{J}(}k)+N)\hat{D}(k)
\label{IDE-ODE}
\end{equation}%
for each $k\in \mathbb{Z}^{d}$ and $\hat{D}(k)\in \mathbb{C}^{\left\vert
S\right\vert }.$ By solving the ODE system (\ref{IDE-ODE}) for each $k$ and
using the inverse formula, we obtain%
\begin{equation*}
D(x,t)=\sum_{k\in \mathbb{Z}}e^{(M\mathcal{\hat{J}(}k)+N)t}\hat{g}(k)e^{2\pi
ix\cdot k}
\end{equation*}%
where $e^{(M\mathcal{\hat{J}(}k)+N)t}$ is $\ \left\vert S\right\vert \times
\left\vert S\right\vert $ matrix, $\hat{g}(k)$ is $\left\vert S\right\vert
\times 1$ vector.

\subsection{Proof of Proposition \protect\ref{prop-linear-log}}

\begin{proof}
First we note that $p_{1}>\zeta ,$ $p_{2},p_{3}<\zeta $ , $\lim_{\kappa
\rightarrow \infty }p_{2}=\zeta ,$ 
\begin{equation*}
\beta (1-l(\beta (p_{i}-\zeta )))l(\beta (p_{i}-\zeta ))<1\text{ for }i=1,3,%
\text{ }\beta (1-l(\beta (p_{i}-\zeta )))l(\beta (p_{i}-\zeta ))>1\text{ for 
}i=2.
\end{equation*}%
Suppose that $\beta >\beta _{C}$ and consider $p_{1}.$ Since $l(\beta
(p_{1}-\zeta ))=p_{1},$ we have $\beta (1-p_{1})p_{1}<1.$ Then since $%
\mathcal{\hat{J}(}k)\leq 1$ for all $k,$ we have 
\begin{equation*}
\lambda (k)=\beta (1-p_{1})p_{1}\mathcal{\hat{J}(}k)-1<\beta
(1-p_{1})p_{1}-1<0
\end{equation*}%
Thus $p_{1}$ is linearly stable. Similar argument shows that $p_{3}$ is
linearly stable.
\end{proof}

\newpage 
\bibliographystyle{econometrica}
\bibliography{spatial_game}

\end{document}